\newcommand{\supt}{\operatorname{supt}}
\newcommand{\dest}{\operatorname{dest}}
\newcommand{\src}{\operatorname{src}}
\newtheorem{thm}{Theorem}[section]
\newtheorem{claim}[thm]{Claim}
\newtheorem*{ping_pong_lem}{Ping Pong Lemma}
\newtheorem{lemma}[thm]{Lemma}
\newtheorem{prop}[thm]{Proposition}
{\theoremstyle{definition} }
{\theoremstyle{definition} \newtheorem{example}[thm]{Example}}
\newtheorem{cor}[thm]{Corollary}
\renewcommand{\text}{\mathrm}
\newenvironment{customthm}[1]
  {\innercustomthm}
  {\endinnercustomthm}
\newcommand{\Bfrak}{\mathfrak{B}}
\newcommand{\R}{{\mathbf R}}
\newcommand{\Z}{{\mathbf Z}}
\newcommand{\PSL}{\operatorname{PSL}}
\newcommand{\HomeoI}{{\operatorname{Homeo}_+(I)}}
\newcommand{\HomeoR}{{\operatorname{Homeo}_+(\R)}}
\newcommand{\PLoI}{{\operatorname{PL}_+(I)}}
\newcommand{\gen}[1]{\langle #1 \rangle}
\theoremstyle{remark}
\newtheorem{remark}[thm]{Remark}
\newcommand{\str}[1]{\mathtt{#1}}
\newcommand{\Th}{{}^{\textrm{th}}}
\newcommand{\dvarbump}[3]
{
\xy
(0,0); (#3,#3)**\crv{(#1,#2)&(#2,#3)};
\endxy
}
\newcommand{\dvarnbump}[3]
{
\xy
(0,0); (#3,#3)**\crv{(#2,#1)&(#3,#2)};
\endxy
}
\begin{document}
\bibliographystyle{amsplain}

\title[Groups of fast homeomorphisms]{
Groups of fast homeomorphisms of the interval and the ping-pong argument}
\thanks{
This paper was prepared in part during a visit of the first and fourth author to
the \emph{Mathematisches Forschungsinstitut Oberwolfach}, Germany in December 2016
as part of their \emph{Research In Pairs} program.
The research of the fourth author was supported in part by
NSF grant DMS-1262019.}

\keywords{algebraically fast,
dynamical diagram,
free group,
geometrically fast,
geometrically proper,
homeomorphism group,
piecewise linear,
ping-pong lemma,
symbol space,
symbolic dynamics,
Thompson's group,
transition chain}

\subjclass[2010]{20B07, 20B10, 20E07, 20E34}

\author[Bleak]{Collin~Bleak}
\author[Brin]{Matthew~G.~Brin}
\author[Kassabov]{Martin~Kassabov}
\author[Moore]{Justin~Tatch~Moore}
\author[Zaremsky]{Matthew~C.~B.~Zaremsky}

\address{
Collin~Bleak \\
School of Mathematics and Statistics \\
University of St. Andrews \\
St. Andrews, Fife KY16 9SS
}

\address{
Matthew~G.~Brin \\
Department of Mathematical Sciences \\
Binghamton University \\
Binghamton, NY 13902-6000
}

\address{
Martin~Kassabov, Justin~Tatch~Moore, Matthew~C.~B.~Zaremsky \\
Department of Mathematics \\
Cornell University \\
Ithaca, NY 14853-4201
}

\begin{abstract}
We adapt the Ping-Pong Lemma, which historically was used to study free
products of groups, to the setting of the homeomorphism group of the unit interval.
As a consequence, we isolate a large class 
of generating sets for subgroups of \(\HomeoI\) 
for which certain finite dynamical data can be used to determine
the marked isomorphism type of the groups which they generate.
As a corollary, we will obtain a criteria for embedding subgroups of 
\(\HomeoI\) into Richard Thompson's group \(F\).
In particular, every member of our class of generating sets
generates a group which embeds into \(F\) and in particular is not a free product.
An analogous abstract theory is also developed for groups of permutations of an infinite set.
 \end{abstract}

\maketitle

\section{Introduction}\label{IntroSec}

The \emph{ping-pong argument} was first used in
\cite[\S III,16]{Klein:Riemann} and \cite[\S II,3.8]{Fricke+Klein} to
analyze the actions of certain groups of linear fractional
transformations on the Riemann sphere.
Later distillations and generalizations of the arguments
(e.g., \cite[Theorem 1]{MR0148731})
were used to establish that a given group is a free product.
In the current paper we will adapt the \emph{ping-pong argument}
to the setting of subgroups of \(\HomeoI\), the
group of the orientation preserving homeomorphisms of the unit interval.
Our main motivation is to develop a better understanding of the finitely generated
subgroups of the group \(\PLoI\) of piecewise linear order-preserving homeomorphisms of
the unit interval.
The analysis in the current paper resembles the
original \emph{ping-pong argument} in that it establishes a tree structure on
certain orbits of a group action.
However, the arguments in the current paper differ from the usual analysis
as the generators of the group action have large sets of fixed points.

The focus of our attention in this article will be subgroups of \(\HomeoI\)
which are specified by what we will term \emph{geometrically fast} generating sets.
On one hand, our main result shows that the isomorphism types of the groups specified by
geometrically fast generating sets are determined by their \emph{dynamical diagram} 
which encodes their qualitative dynamics;
see Theorem \ref{CombToIso} below.
This allows us to show, for instance, that such sets generate groups which
are always embeddable into Richard Thompson's group \(F\).
On the other hand, we will see that a broad class of subgroups
of \(\PLoI\) can be generated using such sets.
This is substantiated in part by Theorem \ref{GeoProperGen} below.

At this point it is informative to consider an example.
First recall the classical \emph{Ping-Pong Lemma}
(see \cite[Prop. 1.1]{free_subgrp_linear}):

\begin{ping_pong_lem}
Let \(S\) be a set and \(A\) be a set of permutations of \(S\)
such that \(a^{-1} \not \in A\) for all \(a \in A\).
Suppose there is an assignment \(a \mapsto D_a \subseteq S\) of pairwise disjoint sets
to each \(a \in A^\pm := A \cup A^{-1}\) and an
\(x \in S \setminus \bigcup_{a \in A^\pm} D_a\) such that
if \(a \ne b^{-1}\) are in \(A^\pm\), then
\[
\big(D_b \cup \{x\}\big)a \subseteq D_a.
\] 
Then \(A\) freely generates \(\gen{A}\).
\end{ping_pong_lem}

\noindent
(We adopt the convention of writing permutations to the right of their arguments;
other notational conventions and terminology will be reviewed in Sections \ref{DefSec} and \ref{FastBumpsSec}.)
In the current paper, we relax the hypothesis so that the
containment \(D_b a \subseteq D_a\)
is required only when \(D_b\) is contained in the \emph{support}
of \(a\); similarly \(x a \in D_a\) is only required when
\(x a \ne x\).

Consider the three functions \((b_i \mid i < 3)\) in
\(\HomeoI\) whose graphs are shown in Figure \ref{FastF3}.
\begin{figure}
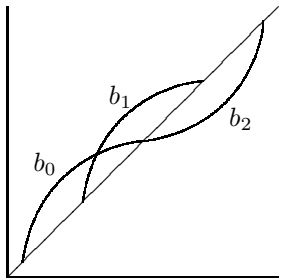

\[
\xy
(0,36); (0,0)**@{-}; (36,0)**@{-}; (0,0); (36,36)**@{-};
(10,9.6)*{\dvarbump{2}{14}{16}}; (5,15)*{b_0}; 
(26,25.6)*{\dvarnbump{2}{14}{16}}; (31,21)*{b_2};
(18,17.6)*{\dvarbump{2}{14}{16}}; (15,24)*{b_1};
\endxy
\]
\caption{Three homeomorphisms}\label{FastF3}
\end{figure}
\newcommand{\varbump}[3]
{
\xy
(0,0); (#3,0)**\crv{(#1,#1)&(#2,#1)};
\endxy
}
\newcommand{\varnbump}[3]
{
\xy
(0,0); (#3,0)**\crv{(#1,-#1)&(#2,-#1)};
\endxy
}
A schematic diagram (think of the line \(y=x\) as drawn
horizontally) of these functions might be:
\[
\xy
(12,2.6)*{\varbump{12}{36}{48}};  (12,9)*{b_0};
(36,2.6)*{\varbump{12}{36}{48}};  (36,9)*{b_1};
(60,-9)*{\varnbump{12}{36}{48}}; (60,-10)*{b_2};
(-12,-3); (-3.5,-3)**@{-};(-3,-3)*{\circ}; (-8,-5)*{S_1}; (-3,-5)*{};
(12,-3); (20.5,-3)**@{-};(21,-3)*{\circ};  (16,-5)*{S_2}; (21,-5)*{};
(36,-3); (27.5,-3)**@{-};(27,-3)*{\bullet};  (32,-5)*{D_1}; (27,-5)*{};
(36,-3); (44.5,-3)**@{-};(45,-3)*{\circ};  (40, -1)*{D_3}; (45,-5)*{};
(60,-3); (51.5,-3)**@{-};(51,-3)*{\bullet};  (56, -5)*{D_2}; (51,-5)*{};
(84,-3); (75.5,-3)**@{-};(75,-3)*{\bullet};  (80,-1)*{S_3};  (75,-5)*{};
\endxy
\]
In this diagram, we have assigned intervals \(S_i\) and \(D_i\)
to the ends of the support to each \(b_i\) so that
the entire collection of intervals is pairwise disjoint. 
Our system of homeomorphisms is assumed to have an additional
dynamical property reminiscent of the hypothesis of the Ping-Pong Lemma:
\begin{itemize}

\item \(S_i b_i \cap D_i = \emptyset\);

\item \(b_i\) carries \(\supt(b_i) \setminus S_i\) into \(D_i\);

\item  \(b_i^{-1}\) carries \(\supt(b_i) \setminus D_i\) into \(S_i\) for each \(i\).

\end{itemize}

A special case of our main result
is that these dynamical requirements on the
\(b_i\)'s are sufficient to characterize the isomorphism type of
the group \(\gen{b_i \mid i < 3}\):
any triple \((c_i \mid i < 3)\)
which produces this same \emph{dynamical diagram} and satisfies these dynamical
requirements will generate a group isomorphic to \(\gen{b_i \mid  i < 3}\).
In fact the map \(b_i \mapsto c_i\) will extend to an isomorphism.
In particular,
\[
\gen{b_i \mid  i < 3} \cong
\gen{b_i^{k_i} \mid  i < 3}
\]
for any choice of \(k_i\geq 1\) for each \(i < 3\).

We will now return to the general discussion and be more precise.
Recall that if \(f\) is in \(\HomeoI\), then its \emph{support} is defined to
be \(\supt(f):=\{ t \in I \mid tf \ne t\}\).
A left (right) transition point of \(f\) is a \(t \in I \setminus \supt(f)\) such that for every \(\epsilon > 0\),
\((t,t+\epsilon) \cap \supt(f) \ne \emptyset\)
(respectively \((t-\epsilon,t) \cap \supt(f) \ne \emptyset\)).  
An \emph{orbital} of \(f\) is a component its support.
An orbital of \(f\) is \emph{positive} if \(f\) moves elements of the orbital to
the right;
otherwise it is negative.
If \(f\) has only finitely many orbitals, then the left (right) transition points
of \(f\) are precisely the left (right) end points of its orbitals.

A precursor to the notion of a \emph{geometrically fast} generating set is that of a
\emph{geometrically proper} generating set.
A set \(X \subseteq \HomeoI\) is
\emph{geometrically proper} if there is no element of \(I\) which is
a left transition point of more than one element of \(X\)
or a right transition point of more than one element of \(X\).
Observe that any geometrically proper generating set with only finitely many transition points
is itself finite.
Furthermore, geometrically proper sets are equipped with a canonical ordering
induced by the usual ordering on the least transition points of its elements.
While the precise definition of \emph{geometrically fast}
will be postponed until Section \ref{FastBumpsSec},
the following statements describe the key features of the definition:
\begin{itemize}

\item geometrically fast generating sets are geometrically proper.

\item if \(\{a_i \mid  i < n\}\) is geometrically proper, then
there is a \(k \geq 1\) such that 
\(\{a_i^k \mid  i < n\}\) is geometrically fast.

\item if \(\{a_i^k \mid  i < n\}\) is geometrically fast and \(k \leq k_i\) for \(i < n\), then
\(\{a_i^{k_i} \mid  i < n\}\) is geometrically fast.

\end{itemize}
\noindent
Our main result is that the isomorphism types of groups with geometrically fast generating sets
are determined by their qualitative dynamics.
Specifically, we will associate a \emph{dynamical diagram} to each geometrically fast set
\(\{a_i \mid i < n\} \subseteq \HomeoI\) which has finitely many transition points.
Roughly speaking, this is a record of the relative order of the orbitals
and transition points of the various \(a_i\), as well as the orientation of their orbitals.
In the following theorem \(M_X\) is a certain finite set of points chosen from the orbitals
of elements of \(X\) and \(M \gen{X} = \{t g \mid  t \in M_X \textrm{ and } g \in \gen{X}\}\).
These points will be chosen such that any nonidentity element of \(\gen{X}\) moves a point in \(M\gen{X}\).

\begin{thm} \label{CombToIso}
If two geometrically fast sets \(X , Y \subseteq \HomeoI\) have only finitely many transition points
and have isomorphic dynamical diagrams, then the induced
bijection between \(X\) and \(Y\) extends to an isomorphism of
\(\gen{X}\) and \(\gen{Y}\) (i.e. \(\gen{X}\) is \emph{marked isomorphic} to \(\gen{Y}\)).
Moreover, there is an order preserving bijection
\(\theta :  M \gen{X} \to M \gen{Y}\) such that
\(f \mapsto f^\theta\) induces the isomorphism 
\(\gen{X} \cong \gen{Y}\).
\end{thm}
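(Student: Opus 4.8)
The plan is to build a combinatorial model of the group $\gen{X}$ that depends only on the dynamical diagram, and then show that both $\gen{X}$ and $\gen{Y}$ are isomorphic to this model via the natural identifications. The fundamental tool is a ping-pong argument adapted to the fact that the generators have large fixed-point sets: rather than a single global disjointness of ping-pong domains, one uses the \emph{transition chains} recorded in the dynamical diagram to decide, for a given word $w = a_{i_1}^{\pm}\cdots a_{i_k}^{\pm}$ in reduced form and a given marked point $t \in M_X$, which ``destination'' interval $t$ lands in after each successive letter is applied. Concretely, I would first establish a \emph{normal form / action lemma}: using the geometrically fast hypothesis (so that $S_i b_i \cap D_i = \emptyset$, and $b_i$ carries $\supt(b_i)\setminus S_i$ into $D_i$, etc.), show by induction on word length that the orbit of each $t \in M_X$ under $\gen{X}$ carries a tree structure, and that whether $tw \ne t$ — and, if so, which orbital of which generator contains $tw$ — is computed by a deterministic procedure reading only the dynamical diagram. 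This is the step I expect to be the main obstacle: one must carefully track how a point can ``escape'' the support of the last letter applied, pass through a transition point, and enter the support of the next letter, and show the bookkeeping is governed entirely by the order data in the diagram and is insensitive to the actual homeomorphisms chosen.

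Granting that lemma, the argument proceeds as follows. Let $\Phi$ be the combinatorial object consisting of $M_X$ together with the formal partial action of the free monoid on $X^\pm$ dictated by the diagram; by the action lemma this $\Phi$ is determined up to canonical isomorphism by the dynamical diagram alone. Since $X$ and $Y$ have isomorphic dynamical diagrams, fix a diagram isomorphism; it induces a bijection $X \leftrightarrow Y$ and, via $\Phi$, an order-preserving bijection $\theta_0 \colon M_X \to M_Y$ compatible with the two formal actions. Now I claim $\theta_0$ extends to an order-preserving bijection $\theta \colon M\gen{X} \to M\gen{Y}$: given $t \in M_X$ and $g \in \gen{X}$ with corresponding $g' \in \gen{Y}$ (image of $g$ under $X \mapsto Y$ extended to words), set $(tg)\theta := (t\theta_0)g'$. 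Well-definedness is exactly the statement that $tg = sh$ in $M\gen{X}$ iff $(t\theta_0)g' = (s\theta_0)h'$ in $M\gen{Y}$, which follows because by the action lemma both sides are detected by the same diagram-computed data; order-preservation follows similarly, since the relative order of two points $tg, sh$ in $M\gen{X}$ is again something the action lemma expresses in terms of the diagram. That $\theta$ is a bijection is clear since the construction is symmetric in $X$ and $Y$.

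Finally I would verify that $f \mapsto f^\theta := \theta^{-1} f \theta$ (acting on the right on $M\gen{Y}$) carries $\gen{X}$ into $\gen{Y}$ and realizes the marked isomorphism. The key point here is that $M_X$ was chosen so that \emph{every} nonidentity element of $\gen{X}$ moves some point of $M\gen{X}$; hence the action of $\gen{X}$ on $M\gen{X}$ is faithful, and likewise for $\gen{Y}$ on $M\gen{Y}$. Under $\theta$, the permutation of $M\gen{X}$ induced by a generator $a_i \in X$ is conjugated to the permutation of $M\gen{Y}$ induced by the corresponding $b_i \in Y$ — this is immediate from the definition of $\theta$ on orbits — so $a_i^\theta = b_i$ as permutations of $M\gen{Y}$, and by faithfulness this means the assignment $a_i \mapsto b_i$ extends to a group isomorphism $\gen{X} \to \gen{Y}$, i.e. the groups are marked isomorphic and $\theta$ implements the isomorphism in the stated way. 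The only subtlety to dispatch is that $\gen{a_i^\theta \mid i < n\}$ indeed lands inside $\gen{Y}$ (a priori it is just a subgroup of permutations of $M\gen{Y}$), which again follows from the action lemma identifying these conjugated permutations with the genuine restrictions of the $b_i$.
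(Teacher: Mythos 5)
Your proposal follows essentially the same route as the paper: the ``normal form / action lemma'' you defer is precisely the content of the paper's ping-pong analysis (the tree structure on orbits of markers, uniqueness of locally reduced words, and faithfulness of the action on \(M\gen{X}\)), and your combinatorial model \(\Phi\) is the paper's set \(\Lambda_X\) of local words with diagram-determined local reduction, ordered reverse-lexicographically to make \(\theta\) order preserving. The architecture --- reduce to the faithful action on marker orbits, transport it through a diagram-computed symbolic model, and conjugate by the resulting order-preserving bijection \(\theta\) --- matches the paper's proof, with the main technical burden correctly located.
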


We will also establish that under some circumstances the map \(\theta\) can be extended to a
continuous order preserving surjection \(\hat \theta : I \to I\).

\begin{thm} \label{SemiConjThm}
For each finite dynamical diagram \(D\), there is a geometrically fast \(X_D \subseteq \PLoI\)
such that if \(X \subseteq \HomeoI\) is geometrically fast and has dynamical diagram \(D\),
then there is a marked isomorphism \(\phi:  \gen{X} \to \gen{X_D}\) and a continuous
order preserving surjection \(\hat \theta : I \to I\) such that
\(f \hat \theta = \hat \theta \phi (f)\) for all \(f \in \gen{X}\).
\end{thm}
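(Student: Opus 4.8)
The plan is to fix a convenient piecewise-linear model $X_D$ realizing $D$, read off the marked isomorphism from Theorem~\ref{CombToIso}, and then upgrade the resulting order-preserving bijection between marked orbits to the semiconjugacy $\hat\theta$. For the model: the combinatorics recorded in $D$ prescribe, up to order-isomorphism, where the orbitals and transition points of the would-be generators must sit, so one places these at dyadic points of $I$ in the dictated pattern and lets each generator be a homeomorphism in $\PLoI$ with dyadic breakpoints and dyadic slopes supported on the union of its prescribed orbitals; replacing $X_D$ by a fixed power if necessary (legitimate by the bulleted properties of \emph{geometrically fast} recalled in the introduction) makes $X_D$ geometrically fast, and by construction its dynamical diagram is $D$. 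One builds $X_D$ with one extra feature, exploited below: on each complementary interval of $\overline{M\gen{X_D}}$ in $I$ on which $\gen{X_D}$ acts nontrivially, the induced action of the setwise stabilizer is again a standard picture governed by a strictly smaller dynamical diagram.

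Now let $X\subseteq\HomeoI$ be geometrically fast with dynamical diagram $D$. Theorem~\ref{CombToIso} supplies a marked isomorphism $\phi\colon\gen X\to\gen{X_D}$ and an order-preserving bijection $\theta\colon M\gen X\to M\gen{X_D}$ with $(tf)\theta=(t\theta)\phi(f)$ for $t\in M\gen X$ and $f\in\gen X$. The first point is that $\theta$ has no jump discontinuities: a jump would be a two-sided accumulation point of $M\gen X$ whose $\theta$-image is an endpoint of a complementary interval of $\overline{M\gen{X_D}}$, but the derived-set structure of the marked orbit---which accumulation points occur and whether they are approached from one side or from two---is an invariant of $D$ and is respected by $\theta$ (it can also be read off from the explicit description of $\theta$ in the proof of Theorem~\ref{CombToIso}). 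Hence $\theta$ extends to an order-preserving homeomorphism $\bar\theta\colon\overline{M\gen X}\to\overline{M\gen{X_D}}$, and $\bar\theta$ is still $\phi$-equivariant. Each element of $\gen X$ maps $\overline{M\gen X}$ onto itself and so permutes the family $\mathcal G$ of complementary intervals of $\overline{M\gen X}$ in $I$; the map $\bar\theta$ conjugates this action to the corresponding one of $\gen{X_D}$ via $\phi$. Choose one interval $G=(a,b)$ from each $\gen X$-orbit in $\mathcal G$. If $\bar\theta(a)=\bar\theta(b)$, declare $\hat\theta$ constant on $G$; this is consistent because $\phi(\mathrm{Stab}_{\gen X}(G))$ fixes the point $\bar\theta(a)$. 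Otherwise $(\bar\theta(a),\bar\theta(b))$ is a complementary interval of $\overline{M\gen{X_D}}$, and, by the extra feature built into $X_D$, the $\gen{X_D}$-stabilizer of that interval acts on it as a standard model of a strictly smaller dynamical diagram $D'$; the $\gen X$-stabilizer of $G$ acts on $G$ as a geometrically fast group with diagram $D'$, so the inductive hypothesis (on the complexity of $D$) yields a $\mathrm{Stab}_{\gen X}(G)$-equivariant continuous order-preserving surjection $G\to(\bar\theta(a),\bar\theta(b))$, which we take for $\hat\theta\!\restriction\! G$. Extend over the whole $\gen X$-orbit of $G$ by setting $(th)\hat\theta:=\big(t(\hat\theta\!\restriction\!G)\big)\phi(h)$ for $t\in G$ and $h\in\gen X$; this is well defined precisely because $\hat\theta\!\restriction\!G$ was chosen stabilizer-equivariantly. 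Carrying this out for every orbit produces $\hat\theta\colon I\to I$ which agrees with $\theta$ on $M\gen X$, is order preserving, and is onto---the $\bar\theta$-part covers $\overline{M\gen{X_D}}$ and each nondegenerate piece covers the matching complementary interval. Continuity is immediate on each open $G$, and at a point of $\overline{M\gen X}$ it holds because the complementary intervals accumulating there are small and are carried into correspondingly small neighborhoods. Finally $f\hat\theta=\hat\theta\phi(f)$ holds on $M\gen X$ by $\phi$-equivariance of $\theta$, on each $G$ by the equivariant construction, and hence on all of $I$ by density and continuity.

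The main obstacle is the extra requirement imposed on $X_D$: one must arrange that the action of $\gen{X_D}$ inside the gaps of $\overline{M\gen{X_D}}$ is again a standard piecewise-linear picture of strictly smaller complexity, and, dually, one must show that for an arbitrary geometrically fast $X$ the action of $\gen X$ inside the gaps of $\overline{M\gen X}$ is governed by a sub-diagram of $D$---this is what licenses the inductive step, and it is here that the fine structure theory behind Theorem~\ref{CombToIso} does the real work. The companion technical point is the verification that $\theta$ carries no jumps, i.e.\ that the accumulation structure of the marked orbit is a diagram invariant; granting these, the passage from $\theta$ to $\hat\theta$ is the routine bookkeeping sketched above.
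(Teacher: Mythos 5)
There is a genuine gap at the crux of your argument: the ``extra feature'' you impose on $X_D$ --- that the setwise stabilizer of each complementary interval of $\overline{M\gen{X_D}}$ acts on it as a standard model of a strictly smaller dynamical diagram, together with the dual claim that for an arbitrary geometrically fast $X$ the gap stabilizers act via matching sub-diagrams --- is never established, and it is far from clear that it can be. The stabilizer of a gap need not be finitely generated, its action on the gap need not be geometrically fast or governed by any finite diagram, no complexity measure is exhibited for which the induction is well-founded, and it is not shown that corresponding gaps of $\overline{M\gen X}$ and $\overline{M\gen{X_D}}$ carry isomorphic induced diagrams, which your orbit-by-orbit matching requires. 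You flag this yourself as ``the main obstacle,'' but overcoming it \emph{is} the proof, so as written the argument is incomplete. The companion claim that $\theta$ has no jumps (that the one-sided versus two-sided accumulation structure of $M\gen X$ is a diagram invariant) is likewise asserted rather than proved.

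The paper avoids all of this with one extra idea in the construction of $X_D$: after using Proposition \ref{IsolatedFixProp} to pass to a diagram with no isolated bumps, it places the $2n$ feet on a uniform grid of intervals of length $1/(2n)$ and takes each $b_i$ linear on its source, arranged so that $b_i$ has slope at least $2$ on $\src(b_i)$ and $b_i^{-1}$ has slope at least $2$ on $\src(b_i^{-1})$. Any subinterval of $I$ missing the grid points then lies inside a foot and is at least doubled in length by the appropriate generator, so by induction every interval eventually captures a grid point; hence $M\gen{X_D}$ is \emph{dense} in $I$ (Claim \ref{OrbitDensity}). With the target orbit dense, $\hat\theta(x):=\sup\{t\theta \mid t\in M\gen X,\ t\le x\}$ is automatically a continuous order-preserving surjection extending $\theta$, equivariance passes from $M\gen X$ to all of $I$ by density and continuity, and there are no gaps in $\overline{M\gen{X_D}}$ to analyze at all. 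You should replace the inductive gap analysis with this density argument; without it (or a complete proof of your stabilizer claims) the proposal does not constitute a proof.
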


Theorem \ref{CombToIso} has two immediate consequences.
The first follows from the readily verifiable fact that any dynamical diagram can
be realized inside of \(F\) (see, e.g., \cite[Lemma 4.2]{CFP}).

\begin{cor} \label{EmbeddInF}
Any finitely generated
subgroup of \(\HomeoI\) which admits a geometrically fast generating set
embeds into Thompson's group \(F\).
\end{cor}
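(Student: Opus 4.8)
The plan is to read this off from Theorem \ref{CombToIso} together with the fact, recalled just above and verified e.g.\ in \cite[Lemma 4.2]{CFP}, that every finite dynamical diagram is realized by a subset of \(F\). Let \(G \le \HomeoI\) be finitely generated with geometrically fast generating set \(X\); as with every generating set in the class that concerns us, we may assume \(X\) has only finitely many transition points, and we let \(D\) be its dynamical diagram. The strategy is: (i) build a geometrically fast \(Y \subseteq F\) whose dynamical diagram is isomorphic to \(D\); (ii) apply Theorem \ref{CombToIso} to obtain a marked isomorphism \(G = \gen{X} \cong \gen{Y}\); (iii) observe that \(\gen{Y} \le F\), so this isomorphism is an embedding of \(G\) into \(F\). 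Steps (ii) and (iii) are immediate once (i) is done, so essentially all the work is in (i).

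For (i) I would unwind what a dynamical diagram is: a finite linearly ordered list of intervals, each tagged by a sign (the orientation of the corresponding orbital) and by which generator it belongs to, the order recording the relative positions of the orbitals and transition points of the \(X\)-generators along \(I\). First realize this combinatorics by a geometrically \emph{proper} set: choose pairwise disjoint open subintervals of \(I\) with dyadic rational endpoints, placed along \(I\) in the order dictated by \(D\), and on each put a piecewise linear homeomorphism with dyadic breakpoints and slopes that are powers of \(2\), of the sign prescribed by \(D\); for each generator index \(i\), let \(c_i\) be the homeomorphism that is the chosen PL map on each interval tagged \(i\) and the identity elsewhere, so that \(c_i \in F\). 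Since distinct \(c_i\) have their orbitals inside disjoint intervals, no point of \(I\) is a left transition point of two of the \(c_i\), nor a right transition point of two of them; hence \(Y_0 := \{c_i \mid i < n\}\) is geometrically proper with dynamical diagram exactly \(D\).

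Now \(Y_0\) need not itself be geometrically fast, but by the second of the listed features of geometric fastness there is a \(k \ge 1\) such that \(Y := \{c_i^{k} \mid i < n\}\) is geometrically fast; passing to powers alters neither the orbitals, nor their orientations, nor the transition points, so \(Y\) still has dynamical diagram \(D\), and \(Y \subseteq F\) because \(F\) is a group. Then \(X\) and \(Y\) are both geometrically fast with only finitely many transition points and with isomorphic dynamical diagrams, so Theorem \ref{CombToIso} yields the marked isomorphism \(G \cong \gen{Y} \le F\), as required. There is no serious obstacle here; the only point that takes a little care is the explicit PL construction in (i) — arranging dyadic breakpoints and power-of-two slopes \emph{simultaneously}, so that the model lands in \(F\) and not merely in \(\PLoI\), and verifying geometric properness of the resulting set — which is precisely the content of the cited realizability fact. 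If one does not wish to restrict at the outset to generating sets with finitely many transition points, a short additional limiting argument is needed to reduce to that case; I would handle the corollary as stated by first noting that the members of our class all have this property.
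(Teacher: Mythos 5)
Your proposal is correct and follows the paper's own route: realize the dynamical diagram of the generating set by a geometrically fast subset of \(F\) (the content of \cite[Lemma 4.2]{CFP}, obtained essentially as you describe by a geometrically proper PL realization raised to a suitable power) and then invoke Theorem \ref{CombToIso}. The one caveat is your closing remark — a finite geometrically fast set need \emph{not} have finitely many transition points, since a single element may use infinitely many bumps, so the reduction to that case is not automatic; the paper supplies it via the excision results of Section \ref{ExcisionSec} rather than by the observation you suggest.
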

\noindent
Since by \cite{brin+squier} \(F\) does not contain nontrivial free produces of groups,
subgroups of \(\HomeoI\) which admit geometrically fast generating sets are not free products.
It should also be remarked that while our motivation comes from studying
the groups \(F\) and \(\PLoI\), the conclusion of Corollary \ref{EmbeddInF} remains valid if
\(F\) is replaced by, e.g. \(\operatorname{Diff}^\infty_+ (I)\).

\begin{cor} \label{AlgFast}
If \(\{f_i \mid i < n\}\) is geometrically fast, then
\(\gen{f_i \mid i < n}\) is marked isomorphic to
\(\gen{f_i^{k_i} \mid i < n}\) for any choice of \(k_i \geq 1\).
\end{cor}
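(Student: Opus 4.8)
The plan is to obtain this as an essentially immediate application of Theorem~\ref{CombToIso}. Set $X := \{f_i \mid i < n\}$ and $Y := \{f_i^{k_i} \mid i < n\}$. I want to verify that $X$ and $Y$ are both geometrically fast, that they carry the same dynamical diagram, and that the bijection $X \to Y$ induced by the (identity) isomorphism of diagrams is exactly $f_i \mapsto f_i^{k_i}$. Granting this, Theorem~\ref{CombToIso} produces an order preserving bijection $\theta : M\gen{X} \to M\gen{Y}$ and a marked isomorphism $\gen{X} \cong \gen{Y}$ with $f_i \mapsto f_i^{k_i}$, which is precisely the assertion.

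The substantive point is the elementary observation that for $k \geq 1$ a homeomorphism $f$ and its power $f^k$ have exactly the same orbitals, the same transition points, and the same orientation on every orbital. Indeed $\supt(f^k) \subseteq \supt(f)$ always, since $tf = t$ forces $tf^k = t$; and on an orbital $(a,b)$ of $f$ one has $tf > t$ for all $t$ (or $tf < t$ for all $t$), whence $tf^k > t$ (respectively $tf^k < t$) for all $t \in (a,b)$, so $(a,b)$ is fixed-point-free for $f^k$ and its endpoints are fixed by $f^k$. Thus $(a,b)$ is a component of $\supt(f^k)$, and conversely every orbital of $f^k$ lies in $\supt(f^k) \subseteq \supt(f)$ and so coincides with an orbital of $f$; the orientations agree by the same computation. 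Since a dynamical diagram records only the relative order of the orbitals and transition points together with the orientation of each orbital, $X$ and $Y$ carry literally the same diagram, with the canonical identification matching the data of $f_i$ with that of $f_i^{k_i}$. That $Y$ is geometrically fast then follows from the stability property recorded in the introduction applied with $k=1$: $\{f_i^1 \mid i<n\} = X$ is geometrically fast and $1 \le k_i$ for all $i<n$, hence $\{f_i^{k_i} \mid i<n\}$ is geometrically fast. (In particular $Y$ is geometrically proper, which also follows directly as $X$ and $Y$ have the same transition points.)

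The remaining hypothesis of Theorem~\ref{CombToIso} to be dealt with is that the generating sets have only finitely many transition points: when $X$ has this property so does $Y$, by the previous paragraph, and the corollary follows verbatim. Handling the general statement is the one place where genuine work is needed — one must either appeal to the form of the main theorem established in the body of the paper or reduce to the finite case, using that the failure of any relation $w(f_\bullet) = \id$ is already witnessed on the finite set $M\gen{X}$ and engages only finitely many orbitals at a time, so that the finite-transition-point version of Theorem~\ref{CombToIso} can be applied to the relevant sub-configurations and the partial isomorphisms patched together. I expect this localization bookkeeping to be the only real obstacle; once the two structural facts above are in place, the marked isomorphism itself is simply handed to us by Theorem~\ref{CombToIso}.
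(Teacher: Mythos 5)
Your argument is correct and is exactly the paper's: the paper derives Corollary \ref{AlgFast} from Theorem \ref{CombToIso} in one line by observing that \(\{f_i \mid i<n\}\) and \(\{f_i^{k_i}\mid i<n\}\) have isomorphic dynamical diagrams, which is precisely the content of your two structural observations (equality of orbitals, transition points and orientations, plus the stability of geometric fastness under raising to higher powers). The caveat you raise about the case of infinitely many transition points is genuine but is one the paper itself elides --- its restatement of the corollary in Section \ref{CombToIsoSec} quietly adds the finitely-many-transition-points hypothesis --- so you have, if anything, been more careful than the source.
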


It is natural to ask how restrictive having a geometrically fast or geometrically
proper generating set is.
The next theorem shows that many finitely generated subgroups of \(\PLoI\)
in fact do have at least a geometrically proper generating set.

\begin{thm} \label{GeoProperGen}
Every \(n\)-generated one orbital subgroup of \(\PLoI\)
either contains an isomorphic copy of \(F\) or else admits an
\(n\)-element geometrically proper generating set.
\end{thm}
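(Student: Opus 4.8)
The plan is to reduce to a faithful action, then run a minimization over Nielsen transformations of a generating $n$-tuple, with the argument bifurcating according to whether a certain commutator vanishes. Write $G$ for the group in question. \emph{Normalization.} Since $G$ has a single orbital we may assume, after an affine rescaling of $I$, that $\supt(G) = (0,1)$; then $G$ acts faithfully on $(0,1)$, because an element trivial on $(0,1)$ is trivial on all of $I$. The case $n = 1$ is trivial, since any one-element set is automatically geometrically proper, so fix $n \geq 2$ and a generating $n$-tuple.

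\emph{The commutator dichotomy.} The heart of the matter is the following. Suppose $g, h \in G$ share a left transition point $t \in (0,1)$, with corresponding orbitals $O_g = (t, t_g)$ and $O_h = (t, t_h)$; these are nested, and near $t$ both $g$ and $h$ are linear and fix $t$, so $[g,h]$ is the identity on a right-neighborhood of $t$. A short computation then shows: if $t_g \neq t_h$, or if $t_g = t_h$ but $g|_{O_g}$ and $h|_{O_h}$ do not commute (equivalently, $g$ and $h$ do not commute), then $[g,h] \neq \id$, and $[g,h]$ is also the identity near the right end of the larger orbital, so it has an orbital whose closure is compactly contained in an orbital of $g$ or of $h$. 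By the ubiquity of Thompson's group $F$ (Brin) — an element with an orbital, together with an element having an orbital compactly contained in it, generate a subgroup containing $F$ — this yields an embedding of $F$ into $G$, and the theorem holds through its first alternative. In the complementary case $g$ and $h$ commute, their orbitals at $t$ coincide, say both equal $O$, and $g|_O, h|_O$ are commuting one-orbital piecewise-linear homeomorphisms of $O$; using that the centralizer in $\PLoI$ of a one-orbital element is cyclic, they are powers of a common root $r$ of $O$, and in particular $g$ and $h$ have commensurable germs at $t^+$. The same discussion applies at right transition points.

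\emph{The reduction.} Assuming $G$ contains no copy of $F$, every transition-point coincidence among the generators is thus "aligned" as above. Among all $n$-tuples obtained from the given generating tuple by Nielsen transformations (inversion, permutation, and replacing a generator by its product with another generator or its inverse) together with simultaneous conjugation by elements of $G$ — all of which preserve $G$ and $n$ — choose one, $(g_0, \dots, g_{n-1})$, of least complexity for a suitable measure (lexicographically: the total number of orbitals of the $g_i$, then the number of coincidences). If this tuple is geometrically proper we are done; otherwise some $g_0, g_1$ share, say, a left transition point $t$ with common orbital $O$ on which $g_0|_O = r^a$ and $g_1|_O = r^b$. When one of $g_0, g_1$ is supported exactly on $\overline{O}$ this is immediately exploitable: a single Nielsen move $g_0 \mapsto g_0 g_1^{-k}$ (with $k = a/b$ if $b \mid a$, or a short Euclidean sequence of such moves otherwise) makes a generator identically trivial on $O$ — hence no longer having $t$ as a transition point — without creating any new orbital, strictly lowering the complexity and contradicting minimality. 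The general case, where $g_0$ and $g_1$ may carry extra orbitals disjoint from $O$, must be reduced to this one, and here the one-orbital hypothesis on $G$ is essential: without it the conclusion genuinely fails, as a suitable copy of $\Z^2$ supported on three pairwise disjoint intervals admits no $2$-element geometrically proper generating set.

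\emph{Main obstacle.} I expect the bulk of the work to lie in this last reduction: a Nielsen move $g_i \mapsto g_i g_j^{\pm 1}$ that behaves well on $O$ can a priori introduce orbitals — and hence coincidences — on the extra orbitals of $g_i$ or $g_j$, so the complexity need not drop naively. Overcoming this should require using the connectivity of $\supt(G) = (0,1)$ to thread the generators together, a careful choice of the complexity measure and of the order in which coincidences are resolved, and the fact that the alignment property is inherited by the modified generators. The commutator dichotomy and the appeal to the ubiquity of $F$ are, by comparison, short.
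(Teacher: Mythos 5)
Your proposal contains a genuine error, not just an incomplete step: the ``commutator dichotomy'' on which the whole reduction rests is false. Take a positive one-bump function \(h\) with orbital \((0,1)\) and a positive one-bump function \(c\) with support \((a,b)\subset(0,1)\) chosen so that \(ah>b\). The conjugates \(c^{h^k}\) then have pairwise disjoint supports, so \(\gen{h,c}\cong\Z\wr\Z\), which is metabelian and therefore contains no copy of \(F\). Now set \(g_0:=h\) and \(g_1:=hc\). Both are positive one-bump functions with orbital \((0,1)\), so they share both transition points; they do not commute, since \([g_0,g_1]=[h,c]=(c^h)^{-1}c\ne\id\); and yet \(\gen{g_0,g_1}=\gen{h,c}\) is \(F\)-free. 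So ``shared transition point plus non-commuting restrictions'' does not force an embedding of \(F\). The source of the error is your formulation of Brin's ubiquity theorem: an element with an orbital compactly contained in an orbital of another element need \emph{not} generate a copy of \(F\) together with it (the same \(\Z\wr\Z\) witnesses this); the actual hypothesis requires an orbital of an element of the group sharing exactly one end with an orbital of the group. Since the dichotomy fails, the ``aligned, commensurable germs'' alternative does not cover all \(F\)-free configurations, the Euclidean cancellation \(g_0\mapsto g_0g_1^{-k}\) cannot be set up in general, and the minimization has no way to terminate. On top of this, the reduction you defer --- handling the extra orbitals created by Nielsen moves --- is, as you say yourself, the bulk of the work and is not carried out.

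For comparison, the paper's proof treats interior and endpoint coincidences by entirely different mechanisms, neither of which is a cancellation of germs. For a coincidence at an interior point \(x\in(0,1)\): since the support of \(G\) is all of \((0,1)\), some generator \(f\) moves \(x\), and replacing one offending generator \(g\) by \(g^{f^k}\) for a suitable large \(k\) relocates every transition point of \(g\) that \(f\) moves to a point which is not a transition point of any generator; this strictly decreases the number of coincidences while preserving the cardinality of the generating set, with no case analysis and no commutator. The hypothesis that \(F\) does not embed is used exactly once, at the global fixed points \(0\) and \(1\), where conjugation cannot help: by Lemma \ref{CyclicGerm} the image of the germ homomorphism \(\pi_G\) is cyclic, so a Euclidean algorithm on the generating tuple arranges that only one generator has nontrivial germ at \(0\) or \(1\), i.e. only one generator has \(0\) or \(1\) as a transition point, after which all remaining coincidences are interior. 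To salvage your approach you would have to replace the dichotomy by some such correct use of the no-\(F\) hypothesis and find a move that kills interior coincidences without the cancellation bookkeeping; at that point you would essentially have rediscovered the paper's argument.
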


\noindent
Notice that every subgroup of \(\HomeoI\) is contained in a direct product of
one-orbital subgroups of \(\HomeoI\). 
Thus if one's interest lies in studying the structure of subgroups of
\(\PLoI\) which do not contain copies of \(F\), then
it is typically possible to restrict one's attention to groups
admitting geometrically proper generating sets.
The hypothesis of not containing an isomorphic copy of \(F\) in Theorem \ref{GeoProperGen}
can not be eliminated.
This is a consequence of the following theorem and the fact that there are finite
index subgroups of \(F\) which are not isomorphic to \(F\) (see \cite{bleak+wassink}).

\begin{thm} \label{NoGeoProperGen}
If a finite index subgroup of \(F\) is isomorphic to \(\gen{X}\) for some  geometrically proper
\(X \subseteq \HomeoI\), then it is isomorphic to \(F\).
\end{thm}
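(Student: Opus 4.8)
The plan is to transfer the problem inside $F$ itself, using the simplicity of the commutator subgroup $[F,F]$, and then to read off the isomorphism type of the subgroup from the way a geometrically proper generating set is forced to meet the two ends of the interval; no finiteness assumption on $X$ will be needed. First recall that $[F,F]$ is infinite and simple, hence has no proper subgroup of finite index. So if $H\le F$ has finite index, its normal core, being normal of finite index in $F$, meets $[F,F]$ in a subgroup of finite index, whence $[F,F]\sub H$. Then $[H,H]\sub[F,F]$, $[H,H]$ is normal in $[F,F]$ (being normal in $H\supseteq[F,F]$), and $[H,H]\ne 1$ because $H$ is nonabelian; simplicity gives $[H,H]=[F,F]$. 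Consequently $H^{\mathrm{ab}}=H/[F,F]$ is a finite index subgroup of $F/[F,F]\cong\Z^2$.

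Now realize $H$ also as $\gen{X}$ with $X\sub\HomeoI$ geometrically proper, and pass to the action of $\gen{X}$ on $I$. The support of the simple group $[H,H]$ is a single open subinterval $J\sub I$, since a disconnection of $\supt([H,H])$ would exhibit a proper nontrivial normal subgroup of $[H,H]$. This $J$ is $\gen{X}$--invariant, so its two endpoints are fixed by every member of $X$, and $\gen{X}$ acts on $J$; the action is faithful, since its kernel is a normal subgroup of $\gen{X}$ meeting $[H,H]$ trivially, hence centralizing $[H,H]=[F,F]$, and the centralizer of $[F,F]$ in $\HomeoI$ is trivial. Invoking the rigidity of the action of $[F,F]$ on an interval together with the identity $N_{\HomeoI}([F,F])=F$, we may conjugate so that $J=I$, so that $[H,H]$ is the standard copy of $[F,F]$, and so that $X\sub F$ with $\gen{X}=H$; geometric properness of $X$ is inherited under restriction to $J$ (the endpoints of $J$, being fixed by $X$, prevent two of the restricted generators from sharing an end).

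The decisive step is the following. For $f\in F$ let $\ell(f),r(f)\in\Z$ be the exponents of the germs of $f$ at $0$ and at $1$, so that $f\mapsto(\ell(f),r(f))$ realizes the abelianization map $F\to\Z^2$ with kernel $[F,F]$. Because the endpoints of $I$ are fixed by every $a\in X$, one has $\ell(a)\ne 0$ exactly when $0$ is a left transition point of $a$, and $r(a)\ne 0$ exactly when $1$ is a right transition point of $a$. Geometric properness therefore permits at most one $a\in X$ with $\ell(a)\ne 0$ and at most one with $r(a)\ne 0$. Since $H$ has finite index in $F$ it is not contained in the infinite index subgroup $\ker(\ell)$, so there is exactly one $a_L\in X$ with $\ell(a_L)\ne 0$; likewise exactly one $a_R\in X$ with $r(a_R)\ne 0$. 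If $a_L=a_R$, the image of $H=\gen{X}$ in $\Z^2$ would be the cyclic subgroup generated by $(\ell(a_L),r(a_L))$, which cannot have finite index in $\Z^2$; hence $a_L\ne a_R$, every remaining member of $X$ has trivial germ at both endpoints, and the image of $H$ in $\Z^2$ is precisely the coordinate sublattice $\ell(a_L)\Z\times r(a_R)\Z$. Since $[F,F]\sub H$, this means $H$ is exactly the preimage in $F$ of a finite index coordinate sublattice of $\Z^2$.

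To finish, one uses that the preimage in $F$ of a finite index coordinate sublattice $m_0\Z\times m_1\Z$ of $\Z^2$ is isomorphic to $F$: it is the group of dyadic piecewise linear homeomorphisms of $I$ whose germ exponents at $0$ and $1$ are divisible by $m_0$ and $m_1$, and a piecewise linear change of coordinates near each endpoint, absorbing the forced power, carries it onto $F$ (this is also contained in the classification of \cite{bleak+wassink}); hence $H\cong F$. The step I expect to be the genuine obstacle is the third paragraph: it is the only place where geometric properness is used, and what must be extracted is that such a generating set can neither wrap one generator around both ends of $I$ nor make two generators share an end --- precisely the freedom one would need in order to realize a ``skew'' finite index sublattice of $\Z^2$, i.e.\ a finite index subgroup of $F$ not isomorphic to $F$. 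The ancillary facts invoked along the way (triviality of the centralizer and the description of the normalizer of $[F,F]$ in $\HomeoI$, rigidity of the $[F,F]$--action on an interval, and the fact that coordinate--sublattice subgroups of $F$ are isomorphic to $F$) are all standard, but assembling them carefully is where the remaining technical work lies.
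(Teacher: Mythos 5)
Your first and last paragraphs are fine and agree with what the paper does via \cite{bleak+wassink}, and your third paragraph is essentially the paper's endpoint--germ argument. The genuine gap is the second paragraph, which is exactly the step the theorem is really about. You assume that the abstract isomorphism between $H\le F$ and $\gen{X}\le\HomeoI$ can be upgraded to a topological conjugacy carrying the action of $\gen{X}$ on $J$ to the standard action of $H$ on $I$, so that after conjugation $X\subseteq F$. No such rigidity is available: faithful actions of $F'$ on an open interval are not unique up to conjugacy (a Denjoy-type blow-up of an orbit of the standard action yields a faithful, non-conjugate action, and the elements of $X$ need not be piecewise linear at all); the normalizer of $F'$ in $\Homeo_+\bigl((0,1)\bigr)$ is strictly larger than $F$ (this is why $\mathrm{Out}(F)$ is nontrivial), so even normalizing the standard copy of $F'$ would not put $X$ inside $F$; and a mere semi-conjugacy would not transport the transition-point/germ data at the endpoints on which your third paragraph depends. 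A secondary error in the same paragraph: the support of the simple group $[H,H]$ can perfectly well be disconnected --- the restriction to one component can be faithful while the group also moves points elsewhere --- so ``a disconnection would exhibit a proper nontrivial normal subgroup'' is not a valid inference. The paper instead uses that every proper quotient of $H$ is abelian to find \emph{some} component of the support on which the restriction is faithful, and passes to that component.

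The paper's proof is organized precisely to avoid any conjugacy between the two actions. It keeps the two representations separate --- $G\le F$ with the slope homomorphism $\pi_G$, and $H=\gen{X}$ with the germ homomorphism $\pi_H$ at the endpoints of its chosen orbital --- and transfers the endpoint information across the abstract isomorphism using purely group-theoretic invariants: Claim \ref{BothGermsNontrivial} (an element of $G$ with both slope coordinates nonzero generates, together with finitely many elements of $G'$, a group containing $G'$), Claim \ref{claim_noFiniteGenSet} (no element of $X$ whose extended support touches an endpoint has the analogous property relative to $\ker\pi_H$), and Claims \ref{EquivCommute} and \ref{NonCyclicImage} (a commutation criterion forcing the image of $\pi_H$ to be noncyclic). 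Your third paragraph becomes correct once ``conjugate so that $X\subseteq F$'' is replaced by this kind of algebraic transfer, but that transfer is where the real work of the theorem lies.
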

\noindent
We conjecture, however, that every finitely generated subgroup of \(F\) is bi-embeddable
with a subgroup admitting a geometrically fast generating set.

While the results of this paper do of course readily adapt to \(\HomeoR \cong \HomeoI\),
it is important to keep in mind that \(\pm \infty\) must be allowed as possible
transition points when applying the definition of geometric properness and hence
geometric fastness.
For example, it is easy to establish that \(\gen{t + \sin (t), t + \cos ( t)}\) contains
a free group using the \emph{ping-pong lemma} stated above
(the squares of the generators generate a free group).
Moreover, once we define \emph{geometrically fast} in Section \ref{FastBumpsSec},
it will be apparent that the squares of the generators satisfies all of the requirements
of being geometrically fast except that it is not geometrically proper
(since, e.g., \(\infty\) is a right transition point of both functions).
As noted above, this group does not embed into \(F\) and thus
does not admit a geometrically fast (or even a geometrically proper) generating set.
See Example \ref{InfiniteBumpEx} below for a more detailed discussion of a related example.

The paper is organized as follows.
We first review some standard definitions, terminology and notation in Section \ref{DefSec}.
In Section \ref{FastBumpsSec}, we will give a formal definition of \emph{geometrically fast}
and a precise definition of what is meant by a \emph{dynamical diagram}.
Section \ref{GeoFastCriteriaSec} gives a reformulation of \emph{geometrically fast}
for finite subsets of \(\HomeoI\) which facilitates algorithmic checking.
The proof of Theorem \ref{CombToIso} is then divided between Sections
\ref{PingPongSec} and \ref{CombToIsoSec}.
The bulk of the work is in
Section \ref{PingPongSec}, which uses an analog of the \emph{ping-pong argument}
to study the dynamics of geometrically fast sets of one orbital homeomorphisms.
Section \ref{CombToIsoSec} shows how this analysis implies 
Theorem \ref{CombToIso} and how to derive its corollaries.
In Section \ref{SemiConjSec}, we will prove Theorem \ref{SemiConjThm}.
The group \(F_n\), which is the \(n\)-ary analog of Thompson's group \(F\), is
shown to have a geometrically fast generating set in Section \ref{FnSec}.
Section \ref{ExcisionSec} examines when bumps in
geometrically fast generating sets are extraneous and
can be excised without affecting the marked isomorphism type.
Proofs of Theorems \ref{GeoProperGen} and \ref{NoGeoProperGen} are given in Section \ref{GeoProperSec}.
Finally, the concept of \emph{geometrically fast}
is abstracted in Section \ref{AbstractPingPongSec},
where a generalization of Theorem \ref{CombToIso} is stated and proved, as well as corresponding embedding
theorems for Thompson's groups \(F\), \(T\), and \(V\).
This generalization in particular covers infinite geometrically fast subsets of \(\HomeoI\).
Even in the context of geometrically fast sets \(X \subseteq \HomeoI\) with only finitely many
transition points, this abstraction gives a new way of understanding \(\gen{X}\)
in terms of symbolic manipulation.

\section[Preliminaries]{Preliminary definitions, notation and conventions}

\label{DefSec}

In this section we collect a number of definitions and conventions
which will be used extensively in later sections.
Throughout this paper, the letters \(i,j,k,m,n\) will be assumed to range
over the nonnegative integers unless otherwise stated.
For instance, we will write \((a_i \mid i < k)\) to denote a sequence
with first entry \(a_0\) and last entry \(a_{k-1}\).
In particular, all counting and indexing starts at 0 unless stated otherwise.
If \(f\) is a function and \(X\) is a subset of the domain of \(f\), we will write \(f \restriction X\) to
denote the restriction of \(f\) to \(X\).

As we have already mentioned,
\(\HomeoI\) will be used to denote the
set of all orientation preserving homeomorphisms of \(I\);
\(\PLoI\) will be used to denote the set of all piecewise linear elements
of \(\HomeoI\). 
These groups will act on the right.
In particular, \(tg\) will denote the result of applying a homeomorphism
\(g\) to a point \(t\).
If \(f\) and \(g\) are elements of a group, we will
write \(f^g\) to denote \(g^{-1}fg\).

Recall that from the introduction that
if \(f\) is in \(\HomeoI\), then its \emph{support} is defined to
be \(\supt(f):=\{ t \in I \mid tf \ne t\}\).
The support of a subset of \(\HomeoI\) is the union of the supports of its
elements. 
A left (right) transition point of \(f\) is a \(t \in I \setminus \supt(f)\) such that for every \(\epsilon > 0\),
\((t,t+\epsilon) \cap \supt(f) \ne \emptyset\)
(respectively \((t-\epsilon,t) \cap \supt(f) \ne \emptyset\)).  
An \emph{orbital} of \(f\) is a component its support.
An orbital of \(f\) is \emph{positive} if \(f\) moves elements of the orbital to
the right;
otherwise it is negative.
If \(f\) has only finitely many orbitals, then the left (right) transition points
of \(f\) are precisely the left (right) end points of its orbitals.
An \emph{orbital} of a subset of \(\HomeoI\)
is a component of its support.

An element of \(\HomeoI\) with one orbital will be referred to as a \emph{bump function}
(or simply a \emph{bump}).
If a bump \(a\) satisfies that \(t a > t\) on its support,
then we say that \(a\) is \emph{positive};
otherwise we say that \(a\) is \emph{negative}.
If \(f \in \HomeoI\), then \(b \in \HomeoI\) is a \emph{signed bump of \(f\)} if
\(b\) is a bump which agrees with \(f\) on its support.
If \(X\) is a subset of \(\HomeoI\), then a bump \(a\)
is \emph{used in} \(X\)
if \(a\) is positive and
there is an \(f\) in \(X\) such that \(f\)
coincides with either \(a\) or \(a^{-1}\) on the support of \(a\).
A bump \(a\) is used in \(f\) if it is used in \(\{f\}\).
We adhere to the convention that only positive
bumps are used by functions to avoid ambiguities in some statements.
Observe that if \(X \subseteq \HomeoI\) is such that
the set \(A\) of bumps used in \(X\) is finite, then
\(\gen{X}\) is a subgroup of \(\gen{A}\).

If \((g_i \mid i < n)\) and \((h_i \mid i < n)\) are two generating sequences for groups,
then we will say that \(\gen{g_i \mid i < n}\) is \emph{marked isomorphic} to
\(\gen{h_i \mid i < n}\) if the map \(g_i \mapsto h_i\) extends to an isomorphism
of the respective groups.
If \(X\) is a finite geometrically proper subset of \(\HomeoI\), then
we will often identify \(X\) with its enumeration in which the minimum transition
points of its elements occur in increasing order.
When we write \(\gen{X}\) is marked isomorphic to \(\gen{Y}\), we are
making implicit reference to these canonical enumerations of \(X\) and \(Y\).

At a number of points in the paper it will be important to distinguish between
formal syntax (for instance words) and objects (such as group elements) to which they refer.
If \(A\) is a set, then a \emph{string}
of elements of \(A\) is a finite sequence of elements of \(A\).
The length of a string \(\str{w}\) will be denoted \(|\str{w}|\).
We will use \(\varepsilon\) to denote the string of length 0.
If \(\str{u}\) and \(\str{v}\) are two strings,
we will use \(\str{uv}\) to denote
their concatenation;
we will say that \(\str{u}\) is a \emph{prefix} of \(\str{uv}\)
and \(\str{v}\) is a \emph{suffix} of \(\str{uv}\).
If \(A\) is a subset of a group, then a \emph{word} (in \(A\))
is a string of elements of
\(A^\pm := A \cup A^{-1}\).
A \emph{subword} of a word \(\str {w}\) must preserve the order from \(\str {w}\),
but does not have to consist of consecutive symbols from \(\str {w}\).
We write \(\str {w}^{-1}\) for the formal inverse of \(\str {w}\): the product of
the inverses of the symbols in \(\str {w}\) in reverse order.

Often strings have an associated evaluation (e.g. a word represents an element of a group).
While the context will often dictate whether we are working with a string or
its evaluation, we will generally use the typewriter font (e.g. \(\str{w}\)) for strings
and symbols in the associated alphabets and standard math font (e.g. \(w\)) for the
associated evaluations.

In Section \ref{GeoProperSec}, we will use the notion of the \emph{left (right) germ}
of a function \(f \in \HomeoI\) at an \(s \in I\) which is fixed by \(f\)
(left germs are undefined at 0 and right germs are undefined at 1).
If \(0 \leq s < 1\), then define the \emph{right germ of \(f\) at \(s\)} to be the set of all \(g \in \HomeoI\) such
that for some \(\epsilon > 0\), \(f \restriction (s,s+\epsilon) = g \restriction (s,s+\epsilon)\);
this will be denoted by \(\gamma_s^+(f)\).
Similarly if \(0 < s \leq 1\), then one defines the \emph{left germ of \(f\) at \(s\)};
this will be denoted by \(\gamma_s^-(f)\).
The collections 
\[
\{\gamma_s^+(f) \mid f \in \HomeoI \textrm{ and } sf = s\}
\]
\[
\{\gamma_s^-(f) \mid f \in \HomeoI \textrm{ and } sf = s\}
\]
form groups and
the functions \(\gamma_s^+\) and \(\gamma_s^-\) are homomorphisms defined
on the subgroup of \(\HomeoI\) consisting of those functions which fix \(s\).

\section[Fast collections of bumps]{Fast collection of bumps and their dynamical diagrams}

\label{FastBumpsSec}

We are now ready to turn to the definition of \emph{geometrically fast} in the context of finite subsets of
\(\HomeoI\).
First we will need to develop some terminology.
A \emph{marking} of a geometrically proper collection of bumps \(A\)
is an assignment of a \emph{marker} \(t \in \supt(a)\) to each \(a\) in \(A\).
If \(a\) is a positive bump with orbital \((x,y)\) and marker \(t\), then we define
its \emph{source} to be the interval \(\src(a) := (x,t)\)
and its \emph{destination} to be the interval \(\dest(a) := [t a,y)\).
We also set \(\src(a^{-1}) := \dest(a)\) and \(\dest (a^{-1}) := \src(a)\).
The source and destination of a bump are collectively called its \emph{feet}. 
Note that there is a deliberate asymmetry in this definition:
the source of a positive bump is an open interval whereas the destination is half open.
This choice is necessary so that for any \(t \in \supt (a)\), there is a unique \(k\) such that
\(t a^k\) is not in the feet of \(a\), something which is a key feature of the definition.

A collection \(A\) of bumps is \emph{geometrically fast}
if it there is a marking of \(A\) for which its feet form a pairwise disjoint family
(in particular we require that \(A\) is geometrically proper).
This is illustrated in Figure \ref{GeoFastFig}, where the feet of
\(a_0\) are \((p,q)\) and \([r,s)\) and the feet of \(a_1\) are
\((q,r)\) and \([s,t)\).
\newcommand{\hashlabel}[1]
{
\xy
(0,0); (0,-2)**@{-}; (0,-4)*{\scriptstyle #1};
\endxy
}
\begin{figure}
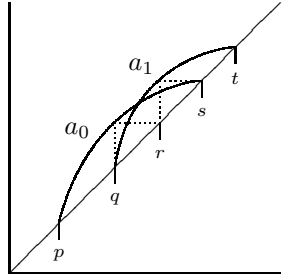

\[
\xy
(0,36); (0,0)**@{-}; (36,0)**@{-}; (0,0); (36,36)**@{-};
(16,15.6)*{\dvarbump{4}{16}{19}}; (9,19)*{a_0};
(22,21.6)*{\dvarbump{2}{14}{16}}; (17.5,27.5)*{a_1};
(14,14);(14,20)**@{.};(20,20)**@{.};
(20,25.5)**@{.};(25.5,25.5)**@{.};
(6.57,4.1)*{\hashlabel{p}};
(14.02,11.55)*{\hashlabel{q}};
(20.0,17.55)*{\hashlabel{r}};
(25.55,23.15)*{\hashlabel{s}};
(30,27.55)*{\hashlabel{t}};
\endxy
\]
\caption{A geometrically fast set of bumps}\label{GeoFastFig}
\end{figure}
Being geometrically fast is precisely the set of dynamical requirements made on the set
\(\{a_i \mid i < 3\}\) of homeomorphisms mentioned in the introduction.
We do not require here that \(A\) is finite and we will explicitly state finiteness as
a hypothesis when it is needed.
Notice however that, since pairwise disjoint families of intervals
in \(I\) are at most countable, any geometrically fast set of bumps is
at most countable.
The following are readily verified and can be used axiomatically to
derive most of the lemmas in Section \ref{PingPongSec}
(specifically Lemmas \ref{LocRedBasics}--\ref{FellowTraveler3}):
\begin{itemize}

\item for all \(a \in A^\pm\), \(\dest(a) \subseteq \supt(a)\) and if
\(x \in \supt(a)\) then there exists a \(k\) such that \(xa^k \in \dest(a)\);

\item if \(a \ne b \in A^\pm\), then \(\dest(a) \cap \dest(b) = \emptyset\);

\item if \(a \in A^\pm\) and \(x \in \supt(a)\), then
\(x a \in \dest(a)\) if and only if \(x \not \in \src(a) := \dest(a^{-1})\).

\item if \(a,b \in A^\pm\), then \(\dest(a) \subseteq \supt(b)\) or
\(\dest(a) \cap \supt(b) = \emptyset\).

\end{itemize}
This axiomatic viewpoint will be discussed further in Section \ref{AbstractPingPongSec}.

A set \(X \subseteq \HomeoI\) is geometrically fast
if it is geometrically proper and the set of bumps used in \(X\) is geometrically fast.
Note that while geometric properness is a consequence of the disjointness of the feet
if \(X\) uses only finitely many bumps, it is an additional requirement in general.
This is illustrated in the next example.

\begin{example} \label{InfiniteBumpEx}
Consider the following homeomorphism of \(\R\):
\[
t \gamma = 
\begin{cases}
3t & \textrm{ if } 0 \leq t \leq 1/2 \\
(t+4)/3 & \textrm{ if } 1/2 \leq t \leq 2 \\
t & \textrm{ otherwise}
\end{cases}
\]
Define \(\alpha , \beta \in \HomeoR\) by 
\((t+2p) \alpha = t \gamma + 2p\) and \((t+2p+1) \beta = t \gamma + 2p+1\)
where \(p \in \Z\) and \(t \in [0,2]\).
Thus the bumps used in \(\alpha\) are obtained by translating \(\gamma\) by even integers;
the bumps in \(\beta\) are the translates of \(\gamma\) by odd integers.
If we assign the marker \(1/2\) to \(\gamma\) and mark the translation of \(\gamma\) by \(p\)
with \(p + 1/2\), then it can be seen that the feet of \(\alpha\) and \(\beta\) are the intervals
\(\{(p,p+1/2) \mid p \in \Z\} \cup \{[p+1/2,p+1) \mid p \in \Z\}\),
which is a pairwise disjoint family.
Thus the bumps used in \(\{\alpha,\beta\}\) are geometrically fast.
Since \(\infty\) is a right transition point of both \(\alpha\) and \(\beta\), 
\(\{\alpha,\beta\}\) is not geometrically proper and hence not fast.
In fact, it follows readily from the formulation of the classical \emph{ping-pong lemma}
in the introduction that \(\gen{\alpha,\beta}\) is free.
\end{example}

Observe that if \(X\) is geometrically proper, each of its elements uses only finitely many bumps,
and the set of transition points of \(X\) is discrete, 
then there is a map \(f \mapsto k(f)\) of \(X\) into the positive integers such that
\(\{f^{k(f)} \mid  f \in X\}\) is geometrically fast.
To see this, start with a marking such that the closures of the sources of the bumps used in
\(X\) are disjoint;
pick \(f \mapsto k(f)\) sufficiently large so that all of the feet become disjoint.
Also notice that if \(\{f^{k(f)} \mid f \in X\}\) is geometrically fast and if \(k(f) \leq l(f)\) for \(f \in X\),
then \(\{f^{l(f)} \mid f \in X \}\) is geometrically fast as well.

If \(X\) is a geometrically fast generating set with only finitely many transition points,
then the \emph{dynamical diagram} \(D_X\) of 
\(X\) is the edge labeled vertex ordered directed graph defined as follows:
\begin{itemize}

\item the vertices of \(D_X\) are the feet of \(X\) with the order
induced from the order of the unit interval;

\item the edges of \(D_X\) are the signed bumps of \(X\) directed
so that the source (destination) of the edge is the source (destination) of the
bump;

\item the edges are labeled by the elements of \(X\) that they come from.

\end{itemize}
\noindent
We will adopt the convention that dynamical diagrams are necessarily finite.
The dynamical diagram of a generating set for the Brin-Navas group \(B\)
\cite{MR2160570} \cite{MR2135961}
is illustrated in the left half of Figure \ref{BNFigure};
the generators are \(f = a_0^{-1} a_2\) and \(g = a_1^{-1}\),
where the \((a_i \mid  i < 3)\) is the geometrically fast generating sequence illustrated in
Figure \ref{TrackingPoint}.
We have found that when drawing dynamical diagram \(D_X\) of a given \(X\), it is more
{\ae}sthetic whilst being unambiguous to collapse
pairs of vertices \(u\) and \(v\) of \(D_X\) such that:
\begin{itemize}

\item  \(v\) is the immediate successor of \(u\) in the order on \(D_X\),

\item \(u\)'s neighbor is below \(u\), and \(v\)'s neighbor is above \(v\).

\end{itemize}
Additionally, arcs can be drawn as over or under arcs to indicate their direction,
eliminating the need for arrows.
This is illustrated in the right half of Figure \ref{BNFigure}.
The result qualitatively resembles the graphs of the homeomorphisms rotated so that
the line \(y =x\) is horizontal.

An isomorphism between dynamical diagrams is a directed graph isomorphism
which preserves the order of the vertices and
induces a bijection between the edge labels
(i.e. two directed edges have equal labels before applying the isomorphism if and only if
they have equal labels after applying the isomorphism).
Notice that such an isomorphism is unique if it exists --- there is at most one order preserving bijection
between two finitely linear orders.

\begin{figure}
\[
\xy
(0,5)*{
\xymatrix{
{\bullet} &
{\bullet} &
{\bullet} \ar@/^1.5pc/[ll]^f  &
{\bullet} \ar@/^1.5pc/[rr]^f &
{\bullet} \ar@/^1.5pc/[lll]^g&
{\bullet}
}};
(-2,10); (63,10)**@{.};
(40,27)*{}; 
(50,18)*{};
\endxy
\quad
\quad
\xy
(0,5)*{
\xymatrix{
{\bullet} &
{\bullet} &
{\bullet} \ar@{-}@/^1.5pc/[ll]^f \ar@{-}@/^1.5pc/[rr]^f&
{\bullet} \ar@{-}@/^1.5pc/[ll]^g&
{\bullet} 
}};
(-2,10); (51,10)**@{.};
(40,27)*{}; 
(50,18)*{};
\endxy
\]
\caption{The dynamical diagram for the Brin-Navas generators, with an illustration
of the contraction convention \label{BNFigure}}
\end{figure}
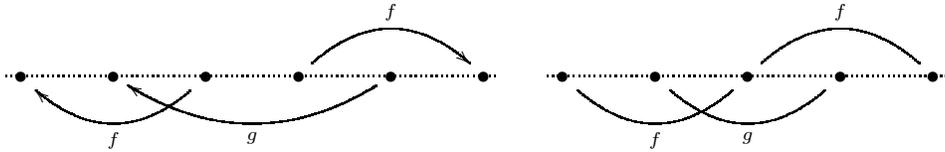
\begin{figure}
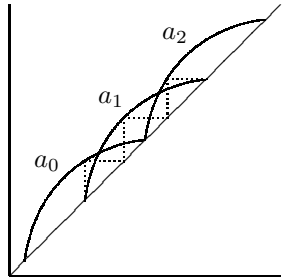

\[
\xy
(0,36); (0,0)**@{-}; (36,0)**@{-}; (0,0); (36,36)**@{-};
(10,9.6)*{\dvarbump{2}{14}{16}}; (5,15)*{a_0};
(18,17.6)*{\dvarbump{2}{14}{16}}; (13.5,23.5)*{a_1};
(26,25.6)*{\dvarbump{2}{14}{16}}; (22,32)*{a_2};
(10,10);(10,15.25)**@{.};(15.25,15.25)**@{.};(15.25,21)**@{.};(21,21)**@{.};(21,26.2)**@{.};(26.2,26.2)**@{.};
\endxy
\]
\caption{A point is tracked through a fast transition chain}\label{TrackingPoint}
\end{figure}
Observe that the (uncontracted) dynamical diagram of any geometrically
fast \(X \subseteq \HomeoI\) which has finitely many transition points has the property that
all of its vertices have total degree 1.
Moreover, any finite edge labeled vertex ordered directed graph in which each vertex has total
degree 1 is isomorphic to the dynamical diagram of some geometrically fast \(X \subseteq \HomeoI\)
which has finitely many transition points (see the proof of Theorem \ref{SemiConjThm} in Section
\ref{SemiConjSec}).
Thus we will write \emph{dynamical diagram} to mean a finite edge labeled vertex ordered
directed graph in which each vertex has total degree 1.
The edges in a dynamical diagram will be referred to as \emph{bumps} and the vertices
in a dynamical diagram will be referred to as \emph{feet}.
Terms such as \emph{source}, \emph{destination}, \emph{left/right foot} will be given the obvious meaning
in this context.

Now let \(A\) be a geometrically fast set of positive bump functions.
An element of \(A\) is \emph{isolated} (in \(A\)) if
its support contains no transition points of \(A\).
In the dynamical diagram of \(A\), this corresponds to a bump whose source and destination are
consecutive feet.
The next proposition shows that we may always eliminate isolated bumps in \(A\) by
adding new bumps to \(A\).
This will be used in Section \ref{SemiConjSec}

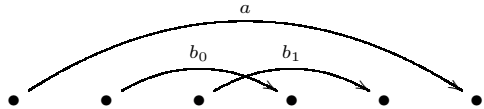
\begin{figure}
\[
\xy
\xymatrix{
{\bullet} \ar@/^2.5pc/[rrrrr]^a &
{\bullet} \ar@/^1.0pc/[rr]^{b_0} &
{\bullet} \ar@/^1.0pc/[rr]^{b_1} &
{\bullet} &
{\bullet} &
{\bullet} &
}
\endxy
\]
\caption{The bump \(a\) is made nonisolated by the addition of bumps \(b_0\) and \(b_1\).
\label{IsolatedFixFig}}
\end{figure}

\begin{prop} \label{IsolatedFixProp}
If \(A \subseteq \HomeoI\) is a geometrically fast set of positive bump functions,
then there is a geometrically fast \(B \subseteq \HomeoI\) such that \(A \subseteq B\)
and \(B\) has no isolated bumps.
Moreover, if \(A\) is finite, then \(B\) can be taken to be finite as well.
\end{prop}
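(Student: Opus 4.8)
The plan is to repair the isolated bumps of $A$ one at a time and independently of each other, by splicing a pair of interleaved bumps into the ``gap'' of each isolated bump, exactly as pictured in Figure~\ref{IsolatedFixFig}. First I would fix a marking of $A$ witnessing that it is geometrically fast. For a positive bump $a \in A$ with orbital $(x,y)$ and marker $t_a$, I will write $J_a := [t_a,\, t_a a)$ for the portion of $\supt(a)$ lying between the two feet $\src(a) = (x, t_a)$ and $\dest(a) = [t_a a, y)$ of $a$; call $J_a$ the gap of $a$. The construction will take $B := A \cup \{\, b_0^a, b_1^a \mid a \textrm{ an isolated bump of } A \,\}$, where for each isolated $a$ the bumps $b_0^a, b_1^a$ are new positive bumps with interleaved orbitals $(\ell_0, r_0),\ (\ell_1, r_1)$ satisfying $\ell_0 < \ell_1 < r_0 < r_1$, all contained in $J_a$, chosen with markers so that the four feet $\src(b_i^a), \dest(b_i^a)$ are pairwise disjoint subintervals of $J_a$. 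This last point is arranged by putting the sources near $\ell_0$ and $\ell_1$ and choosing $b_1^a$ so as to carry its marker past $r_0$, and it is the only explicit verification involved.

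The step I expect to be the main obstacle is showing that, when $a$ is isolated, the gap $J_a$ is \emph{free}, i.e. disjoint from every foot of $A$, and that $J_a \cap J_{a'} = \emptyset$ for distinct isolated bumps $a,a'$. This is where the hypotheses really get used: geometric properness together with the recorded axioms, especially $\dest(b) \subseteq \supt(b)$ and the pairwise disjointness of feet. Since $a$ is isolated, $\supt(a)$ contains no transition point of $A$; so an orbital of $A$ that partially overlapped $\supt(a)$, or was strictly contained in it, would have an endpoint — a transition point of $A$ — inside $\supt(a)$, which is impossible, and an orbital equal to $\supt(a)$ would violate geometric properness. The one remaining possibility is an orbital $\supt(a'')$ strictly containing $\supt(a)$; but then $\src(a)$ and $\dest(a)$, being disjoint from the two feet of $a''$ and contained in $\supt(a'')$, must lie in the gap of $a''$, which forces $\supt(a) \supseteq J_a$ into that gap as well, and the gap of $a''$ is disjoint from the feet of $a''$. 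In every case $J_a$ misses each foot of $A$. Finally, distinct isolated bumps have disjoint orbitals — a nesting would make the larger one non-isolated — hence disjoint gaps.

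Granting this, the remaining verifications are routine. The feet of $B$, under the marking of $A$ extended in the obvious way, are pairwise disjoint: old feet among themselves by geometric fastness of $A$; the four new feet attached to a given isolated $a$ by construction; new feet against old feet because each new foot lies inside some free interval $J_a$; and new feet attached to distinct isolated $a, a'$ because $J_a \cap J_{a'} = \emptyset$. Hence $B$ is geometrically fast. And $B$ has no isolated bump: a formerly isolated $a$ now has $\supt(a) \supseteq J_a$ containing transition points of $b_0^a$; the point $\ell_1$ is a transition point of $b_1^a$ lying in $\supt(b_0^a) = (\ell_0, r_0)$, and $r_0$ is a transition point of $b_0^a$ lying in $\supt(b_1^a) = (\ell_1, r_1)$, so neither new bump is isolated; and no bump that was non-isolated in $A$ can become isolated, since passing from $A$ to $B$ only enlarges the set of transition points. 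If $A$ is finite it has only finitely many isolated bumps, so $B$ is finite, completing the proof.
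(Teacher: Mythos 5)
Your proof is correct and follows essentially the same route as the paper: for each isolated bump \(a\) one splices an interleaved, mutually non-isolated pair of bumps into the gap \(\supt(a)\setminus(\src(a)\cup\dest(a))\). The only difference is that you spell out the verification --- which the paper leaves implicit in the sentence ``Since the feet of \(A\) are disjoint, so are the feet of \(A\cup\{b_0,b_1\}\)'' --- that this gap is disjoint from every foot of \(A\) and that the gaps of distinct isolated bumps are disjoint.
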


\begin{proof}
If \(a \in A\) is isolated, let \(b_0\) and \(b_1\) be a geometrically fast pair of bumps with supports contained in
\(\supt(a) \setminus (\src(a) \cup \dest(a))\) such that neither \(b_0\) nor \(b_1\) is isolated in \(\{b_0,b_1\}\);
see Figure \ref{IsolatedFixFig}.
Since the feet of \(A\) are disjoint, so are the feet of \(A \cup \{b_0,b_1\}\) and \(a\) is no longer isolated
in \(A \cup \{b_0,b_1\}\).
Let \(B\) be the result of adding such a pair of bumps for each isolated bump in \(A\).
\end{proof}

\section[Criteria for geometric fastness]{An algorithmically check-able criteria for geometric fastness}
\label{GeoFastCriteriaSec}

In this section we will consider geometrically proper sets which have finitely many transition points and
develop a characterization of when they are geometrically fast.
This characterization moreover allows one to determine algorithmically when such sets are geometrically fast.
It will also provide a canonical marking of geometrically fast sets with finitely many transition points.
We need the following refinement of the notion of a \emph{transition chain}
introduced in \cite{MR2466019}.
Let \(A \subseteq \HomeoI\)
be a finite geometrically proper set of positive bump functions. 

A sequence \((a_i \mid i\le k)\) of nonisolated elements of \(A\)  is a
\emph{stretched transition chain of \(A\)} if:
\begin{enumerate}

\item \label{TransitionChain}
for all \(i<k\), \(x_i<x_{i+1}<y_i<y_{i+1}\), where 
\((x_i,y_i)\) is the support of \(a_i\);

\item \label{StretchedChain}
no transition point of \(A\) is in any interval \((x_{i+1},y_i)\).

\end{enumerate}
Notice that whether \(C\) is a stretched transition chain depends not only on the
elements of \(A\) listed in \(C\) but on the entire collection \(A\).
Since the left transition points in a stretched transition chain are strictly
increasing, we will often identify such sequences with their range.
In particular, a stretched transition chain \(C\) is \emph{maximal} if is maximal with
respect to containment when
regarded as a subset of \(A\).
We will use \(\prod C\) to denote the composition \(a_0 \cdots a_k\).
An element \(a\) of \(A\) is \emph{initial} (in \(A\)) if either \(a\) is isolated
or else the least transition point of \(A\) in the support of \(a\) is not
the right transition point of some element of \(A\).
These are precisely the elements of \(A\) which are the initial element of any
stretched transition chain which contains them.

Figure \ref{ChainDecompFig} shows a dynamical diagram, along with a list of the
maximal stretched transition chains.
\begin{figure}
\[
\xy
\xymatrix{
{\bullet} \ar@/^1.5pc/[rrr]^{a_0} &
{\bullet} \ar@/^2.5pc/[rrrrr]^{a_1} &
{\bullet} \ar@/^3.0pc/[rrrrrrr]^{a_2} &
{\bullet} \ar@/^2.0pc/[rrrr]^{a_3} &
{\bullet} \ar@/^0.5pc/[r]^{a_4} &
{\bullet} \ar@/^1.5pc/[rrr]^{a_5} &
{\bullet} &
{\bullet} &
{\bullet} &
{\bullet} &
}
\endxy
\]
\caption{The stretched transition chains in this dynamical diagram are
\(\{a_0,a_2\}\), \(\{a_1,a_5\}\), and \(\{a_3\}\); \(a_4\) is isolated
\label{ChainDecompFig}}
\end{figure}
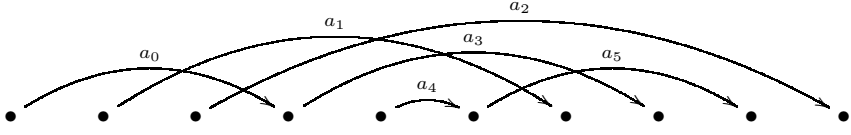

\begin{prop}\label{StretchPartProp}
If \(A\) is a finite geometrically proper set of positive bump functions in \(\HomeoI\),
then the maximal stretched transition chains in \(A\) partition the nonisolated elements of \(A\).
\end{prop}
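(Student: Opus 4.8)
The plan is to show that the relation ``$a \sim b$ if $a$ and $b$ lie on a common stretched transition chain'' is an equivalence relation on the set of nonisolated elements of $A$, and that its equivalence classes are exactly the maximal stretched transition chains. Reflexivity is immediate since a single nonisolated bump $(a_0)$ with $k=0$ vacuously satisfies conditions \eqref{TransitionChain} and \eqref{StretchedChain}; symmetry is trivial since the defining conditions are symmetric under reversing the indexing. The substance is transitivity together with the claim that the resulting classes are the maximal chains, and for this I would first establish a structural description of what it means for two bumps to be chain-adjacent.

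First I would unpack the definitions to get a ``successor'' relation: for nonisolated $a$ with support $(x,y)$, among the transition points of $A$ lying in $(x,y)$ there is a least one, say $p$, and (using geometric properness) $p$ is the \emph{left} endpoint of the support of at most one element of $A$ and the \emph{right} endpoint of at most one element; call the former $a^+$ when it exists. The key claim is that $(a, b)$ is a length-$2$ stretched transition chain exactly when $b = a^+$, i.e.\ the left transition point of $b$ is the least transition point of $A$ strictly inside $\supt(a)$ and is not a right transition point of any element of $A$ — here one checks that condition \eqref{StretchedChain} for the pair forces $x_1$ to be precisely that least transition point, and condition \eqref{TransitionChain} then forces the nesting $x<x_1<y<y_1$. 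Having this, a stretched transition chain $(a_i \mid i \le k)$ is precisely a maximal-length-or-shorter run $a_0, a_0^+ = a_1, a_1^+ = a_2, \dots$; so each nonisolated bump has at most one chain-successor and at most one chain-predecessor. This ``partial bijection / unique successor'' structure is what makes the orbits of the successor relation well-defined, and it immediately yields that the maximal stretched transition chains are pairwise disjoint and cover all nonisolated elements: start from any nonisolated $a$, iterate $(\cdot)^+$ forward until it is undefined and iterate the predecessor backward until it is undefined (both terminate since $A$ is finite), and the resulting finite sequence is a stretched transition chain that is maximal, and is the unique maximal one containing $a$.

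The main obstacle I anticipate is the bookkeeping needed to verify that condition \eqref{StretchedChain}, which quantifies over \emph{all} transition points of $A$ and over \emph{all} gap intervals $(x_{i+1}, y_i)$ simultaneously, is equivalent to the purely local ``$a_{i+1} = a_i^+$'' condition for each consecutive pair — in particular one must rule out a transition point of some bump not on the chain sneaking into a gap $(x_{i+1}, y_i)$, and also confirm that the gaps along a chain are nested/ordered the way the inequalities in \eqref{TransitionChain} predict (so that a violation of \eqref{StretchedChain} localizes to a single consecutive pair). I would handle this by proving the nesting $x_0 < x_1 < \cdots < x_k < y_0 < y_1 < \cdots < y_k$ first (an easy induction from \eqref{TransitionChain}), deduce that the gap $(x_{i+1}, y_i)$ is contained in $\supt(a_i) \cap \supt(a_{i+1})$ and that $x_{i+1}$ is the least transition point of $A$ in $\supt(a_i)$, and then observe that any transition point landing in some gap of the chain would land in the gap $(x_{i+1}, y_i)$ for the appropriate $i$ and hence contradict minimality. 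Once transitivity of $\sim$ and the unique-successor structure are in hand, the partition statement is immediate, which is the assertion of the proposition.
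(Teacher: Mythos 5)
Your overall architecture --- show that each nonisolated bump has at most one chain-successor and at most one chain-predecessor, and read the partition off the resulting orbit structure --- is exactly the paper's, but your key claim identifying the successor is backwards, and this breaks the proof. Condition (\ref{StretchedChain}) forbids transition points of \(A\) in the overlap \((x_{i+1},y_i)\); since any transition point of \(A\) lying in \(\supt(a_i)=(x_i,y_i)\) and exceeding \(x_{i+1}\) would land in that forbidden interval, what the condition forces is that \(x_{i+1}\) is the \emph{greatest} transition point of \(A\) in \(\supt(a_i)\), not the least. Concretely, in Figure \ref{ChainDecompFig} the least transition point of \(A\) in \(\supt(a_0)\) is the left transition point of \(a_1\), so your \(a_0^{+}\) would be \(a_1\); but \((a_0,a_1)\) is not a stretched transition chain, because the left transition point of \(a_2\) lies in the gap between the left end of \(\supt(a_1)\) and the right end of \(\supt(a_0)\), and the true successor of \(a_0\) is \(a_2\). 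So your asserted equivalence ``\((a,b)\) is a length-\(2\) chain iff \(b=a^{+}\)'' fails in both directions. The least transition point of \(A\) in \(\supt(b)\) is instead what detects the \emph{predecessor} of \(b\) (it must be the right transition point of that predecessor --- compare the paper's definition of an initial element), which is likely the source of the mix-up. Once the successor is correctly characterized as the element of \(A\) whose left transition point is the greatest transition point of \(A\) in \(\supt(a)\), geometric properness gives its uniqueness and your orbit argument closes exactly as in the paper.

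A secondary problem: the nesting \(x_0<x_1<\cdots<x_k<y_0<y_1<\cdots<y_k\) that you propose to prove ``by an easy induction from (\ref{TransitionChain})'' is false. Condition (\ref{TransitionChain}) only interlaces consecutive supports; for the generators \(h_i\) of \(F_n\) in Section \ref{FnSec}, with supports \((i,i+2)\), one already has \(x_2=y_0\). Fortunately this nesting is not needed for the proposition: condition (\ref{StretchedChain}) is stated gap-by-gap, so a violation automatically localizes to a single consecutive pair without any globalization step.
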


\begin{remark}
We will see in Section \ref{FnSec} that geometrically fast stretched transition chains of length \(n\) generate
\(F_n\), the \(n\)-ary analog of Thompson's group \(F\).
Thus if \(A\) is geometrically fast, then \(\gen{A}\) is a sort of amalgam of copies of the
\(F_n\) and copies of \(\Z\).
\end{remark}

\begin{proof}
First observe that any sequence consisting of a single nonisolated element of \(A\)
is a stretched transition chain and thus is a subsequence of
some maximal stretched transition chain in \(A\).
Next  suppose that \(a\) and \(b\) are consecutive members of a stretched transition chain.
It follows that \(b\)'s left transition point is in the support of \(a\) and is
the greatest transition point of \(A\) in the support of \(a\).
In particular, \(b\) must immediately follow \(a\) in any maximal stretched transition chain.
Similarly, \(a\) must immediately precede \(b\) in any maximal stretched transition chain.
This shows that every nonisolated element of \(A\)
occurs in a unique maximal stretched transition chain of \(A\).
\end{proof}

If \(C\) is a stretched transition chain, we define \(C_{\min}\) as the
least transition point of \(A\) in the support \(C\) and
\(C_{\max}\) as the greatest transition point of \(A\) in the support \(C\).  
Since \(C\) has no isolated bumps, both \(C_{\min}\) and \(C_{\max}\) are well defined.

\begin{prop}\label{FastCriterion}
If \(A \subseteq \HomeoI\) is a finite geometrically
proper set of positive bump functions, then the following are equivalent:
\begin{enumerate}

\item \label{FastHyp}
\(A\) is geometrically fast;

\item \label{FastChainsHyp}
every stretched transition chain \(C\) of \(A\) satisfies
\(C_{\max} \leq C_{\min} \prod C\).

\item \label{MaxFastChainsHyp}
every maximal stretched transition chain \(C\) of \(A\) satisfies
\(C_{\max} \leq C_{\min} \prod C\).
\end{enumerate}
\end{prop}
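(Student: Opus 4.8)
The plan is to prove the cycle of implications $(\ref{FastHyp}) \Rightarrow (\ref{FastChainsHyp}) \Rightarrow (\ref{MaxFastChainsHyp}) \Rightarrow (\ref{FastHyp})$. The implication $(\ref{FastChainsHyp}) \Rightarrow (\ref{MaxFastChainsHyp})$ is immediate since every maximal stretched transition chain is in particular a stretched transition chain. The conceptual content lies in the other two implications, and the heart of the matter is to understand the inequality $C_{\max} \leq C_{\min} \prod C$ geometrically: reading a point starting at (just right of) $C_{\min}$ and pushing it through the composition $a_0 \cdots a_k$ should land it past all the transition points in the support of $C$, which is exactly what is needed to fit all the feet of the $a_i$ in order without overlap.

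For $(\ref{FastHyp}) \Rightarrow (\ref{FastChainsHyp})$, I would fix a marking of $A$ witnessing geometric fastness and let $C = (a_i \mid i \le k)$ be a stretched transition chain with supports $(x_i, y_i)$ and markers $t_i$. The key observation is that the destination of $a_i$ is $[t_i a_i, y_i)$ and the source of $a_{i+1}$ is $(x_{i+1}, t_{i+1})$; by disjointness of feet these are disjoint, and by the stretched condition (\ref{StretchedChain}) there is no transition point of $A$ strictly between $x_{i+1}$ and $y_i$, so in fact $\dest(a_i)$ lies to the left of $\src(a_{i+1})$, i.e. $y_i \le t_{i+1}$ (one must be slightly careful about whether the relevant endpoint of $\src(a_{i+1})$ can coincide with $y_i$, but the marker $t_{i+1}$ lies strictly inside $(x_{i+1}, y_{i+1})$, so $y_i \le t_{i+1}$ holds). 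Now track the point $C_{\min} = x_0$: after applying $a_0$, since $x_0$ is the left transition point and $t_0 \in \src(a_0)$, the image $x_0 a_0$ already exceeds $t_0$, hence is $\ge y_{-1}$... more precisely, one argues inductively that $(C_{\min}) a_0 \cdots a_i \ge y_i$ for each $i$: applying $a_0$ pushes past $t_0$ hence (using $\dest(a_0) = [t_0 a_0, y_0)$ and that $a_0$ maps $(t_0, y_0)$ into $[t_0 a_0, y_0)$) we need the image of a point just right of $x_0$ under $a_0$ to reach at least $t_1$; since $t_1 \ge y_0$ is false in general, instead the cleaner inductive claim is that the point, once it has been carried past $t_i$ by $a_i$, lies in $\dest(a_i) \subseteq (t_i, y_i)$, and then $a_{i+1}$ (whose source foot starts at $x_{i+1} < y_i$ and ends at $t_{i+1} \ge y_i$) carries it past $t_{i+1}$ as well because the point is already $\ge$ the left foot's right endpoint region. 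Pushing this through to $i = k$ yields that $(C_{\min}) \prod C \ge y_k \ge C_{\max}$. I expect the bookkeeping about open versus half-open feet and the precise inductive invariant to be the main obstacle here; the deliberate asymmetry in the definition of $\src$ and $\dest$ is exactly what is needed to make the induction close.

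For $(\ref{MaxFastChainsHyp}) \Rightarrow (\ref{FastHyp})$, I would construct a marking witnessing geometric fastness. By Proposition \ref{StretchPartProp} the nonisolated elements of $A$ are partitioned into maximal stretched transition chains, and isolated bumps can be marked independently (any marker works, as their feet are automatically squeezed between two consecutive transition points and can be shrunk to be disjoint from everything). For a maximal chain $C = (a_i \mid i \le k)$, I would choose the markers $t_i$ greedily from left to right: put $t_0$ close to $x_0$, so that $\src(a_0) = (x_0, t_0)$ is a tiny interval near the left end; then $\dest(a_0) = [t_0 a_0, y_0)$ and I need the next source $\src(a_1) = (x_1, t_1)$ to begin after $\dest(a_0)$ ends, i.e. $t_0 a_0 \ge$ ... no — rather I need $\dest(a_0)$ to end at or before $x_1$ is false; instead I place $t_1$ so that $\src(a_1) \subseteq (x_1, y_0)$ lies entirely to the right of $\dest(a_0)$, which requires $t_0 a_0$ to be small enough, achievable by taking $t_0$ close enough to $x_0$. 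Iterating, the constraint at each stage is realizable precisely because of the hypothesis $C_{\max} \le C_{\min} \prod C$: this inequality guarantees there is enough room — tracking $C_{\min}$ through the whole composition lands past $C_{\max}$, so the intermediate images leave gaps into which the successive source feet of the $a_{i+1}$ can be inserted. One then checks across distinct chains using condition (\ref{StretchedChain}) and the fact that the chains occupy essentially disjoint ranges of transition points (feet from different chains are separated by transition points that the stretched condition keeps out of the way). Verifying that the greedy choice is always possible — i.e. that the quantitative slack provided by $C_{\max} \le C_{\min} \prod C$ is exactly enough — is the crux, and I would expect to phrase it as: choose $t_0$ so that $x_0 a_0 \cdots a_k$ (which is $> C_{\max} \ge y_{k-1} \ge \dots$) stays above all $C_{\max}$, then shrink from there; a continuity/intermediate-value argument on the dependence of the composed image on $t_0$ makes this rigorous.
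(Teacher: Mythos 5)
Your overall architecture (the trivial implication, a point-tracking argument for (\ref{FastHyp})\(\Rightarrow\)(\ref{FastChainsHyp}), and a greedy marking construction for the converse) is the same as the paper's, but the two substantive steps contain genuine errors as written. In (\ref{FastHyp})\(\Rightarrow\)(\ref{FastChainsHyp}) you propose to track \(C_{\min}=x_0\); but \(C_{\min}\) is the least transition point of \(A\) \emph{inside} the open support of \(C\), not the endpoint \(x_0\), and \(x_0\) is fixed by \(a_0\), so tracking it yields nothing. Worse, your proposed invariant \((C_{\min})a_0\cdots a_i\ge y_i\) is impossible: a point of \(\supt(a_i)=(x_i,y_i)\) remains in \(\supt(a_i)\) after applying \(a_i\), so no image can reach \(y_i\) (and likewise the final claim \((C_{\min})\prod C\ge y_k\) cannot hold). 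The correct invariant, which the paper uses, is the inequality \(s_{i+1}\le s_i a_i\) between consecutive markers, which telescopes to \(C_{\max}\le s_k a_k\le s_0a_0\cdots a_k\le C_{\min}\prod C\).

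In the marking construction the geometry is reversed. Since \(\dest(a_i)=[t_ia_i,y_i)\) extends all the way to \(y_i>x_{i+1}\), the foot \(\src(a_{i+1})=(x_{i+1},t_{i+1})\) can never lie to the \emph{right} of \(\dest(a_i)\); disjointness forces \(t_{i+1}\le t_ia_i\), i.e.\ the source of the next bump must end before the destination of the previous one begins. Consequently ``take \(t_0\) close to \(x_0\)'' is exactly the wrong move: it makes \(t_0a_0\) close to \(x_0\), so \(\dest(a_0)\) becomes nearly all of \(\supt(a_0)\) and must meet \(\src(a_1)\). The paper instead pushes \(t_0\) as far right as permitted (\(t_0=\) the least transition point of \(A\) in \(\supt(a_0)\)) and sets \(t_{i+1}:=t_ia_i\). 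Finally, your cycle closes via (\ref{MaxFastChainsHyp})\(\Rightarrow\)(\ref{FastHyp}) directly, but checking that the resulting feet are pairwise disjoint (e.g.\ that every transition point in \(\supt(a_{i+1})\) lies below \(t_{i+1}a_{i+1}\)) requires the inequality \(C'_{\max}\le C'_{\min}\prod C'\) for \emph{initial segments} \(C'\) of a maximal chain, and this does not follow trivially from the hypothesis on maximal chains alone. That reduction is real work --- the paper proves it by showing that if a concatenation of two stretched transition chains satisfies the inequality then so do both pieces --- and your sketch omits it entirely.
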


\begin{remark}
This criterion for being geometrically fast was our original motivation for
the choice of the terminology: 
the dynamics of the homeomorphisms are such that
transition points can be moved to the right
through transition chains as efficiently as possible. 
This is illustrated in Figure \ref{TrackingPoint}.
\end{remark}

\begin{remark}
The choice not to allow isolated bumps to be singleton stretched transition chains is somewhat arbitrary,
although it would be necessary to make awkward adjustments to the definitions above if
we took the alternate approach.
It seems appropriate to omit them since they play no role in determining whether a group
is fast other than contributing to the collective set of transition points of the set of bumps under consideration.
\end{remark}

\begin{proof}
To see that (\ref{FastHyp}) implies (\ref{FastChainsHyp}), let \(A\) be given
equipped with a fixed marking witnessing that it is geometrically fast.
Let \(C = (a_k \mid k \leq m)\) and
let \(s_k\) denote the marker of \(a_k\).
Notice that if \(t\) is a transition point of \(A\) which is in the support of \(a_k\),
then \(s_k \leq t \leq s_k a_k\); otherwise
a foot associated to \(t\) would intersect a foot of \(a_k\).
It follows that \(s_k a_k\) is in the support of \(a_{k+1}\)
since the left transition point of \(a_{k+1}\) is in the support of \(a_k\) by
(\ref{TransitionChain}) in the definition of stretched transition chain.
Therefore the left foot of \(a_{k+1}\) is to the left of the right foot of \(a_k\)
and so \(s_{k+1} \leq s_k a_k\).
We now have inductively that \(s_m \leq s_0 a_0 \cdots a_{m-1}\) and
hence
\[
C_{\max} \leq s_m a_m \leq s_0 a_0 \cdots  a_m \leq C_{\min}  a_0 \cdots a_m.
\]

In order to see that (\ref{FastChainsHyp}) implies (\ref{FastHyp}),
let \(( a_k \mid  k \leq n )\) be the enumeration of \(A\)
such that the left transition points of the \(a_k\)'s occur in increasing order.
Let \((x_k,y_k)\) denote the support of \(a_k\) for \(k \leq n\).
Begin by setting \(s_k\) to be the least transition point of \(A\) in the support of \(a_k\);
if \(a_k\) is isolated we let \(s_k\)
be the midpoint of the support of \(a_k\).
Define \(t_k\) by induction.
If \(a_k\) is initial, we define \(t_k := s_k\).
If \(s_k\) is the right transition point of \(a_j\) for some \(j < k\),
then define \(t_k := t_j a_j\).
By induction, \(t_k < y_k\).
Observe that if \(t_k = t_j a_j\), then \(t_k = t_j a_j < s_k\)
since \(s_k\) is the right transition point of \(a_j\) and in particular is fixed by \(a_j\).

\begin{claim} \label{ChainClaim}
If \(s\) is a transition point of \(A\) in the support of \(a_k\),
then \(s \leq t_k a_k\).
\end{claim}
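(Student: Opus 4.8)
The plan is to deduce the claim directly from hypothesis (\ref{FastChainsHyp}), applied to a single, carefully chosen stretched transition chain that terminates at $a_k$ — namely the one traced out by the very recursion that defines the markers $t_i$. If $a_k$ is isolated the claim is vacuous, so assume $a_k$ is not isolated; then $s_k$ is a genuine transition point of $A$, namely the least one lying in $\supt(a_k)$. Set $d_0 := a_k$, and as long as $d_\ell$ is not initial let $d_{\ell+1}$ be the element of $A$ — unique by geometric properness — of which $s_{d_\ell}$ is the right transition point. I would first check that $d_{\ell+1}$ precedes $d_\ell$ in the enumeration of $A$ by increasing left transition point: writing $(x',y') := \supt(d_\ell)$ and $(x,y) := \supt(d_{\ell+1})$, we have $y = s_{d_\ell} \in (x',y')$, and if $x \geq x'$ then (as $x \neq x'$ by geometric properness) $x \in (x',y')$ would be a transition point of $A$ in $\supt(d_\ell)$ strictly below $s_{d_\ell}$, contradicting minimality; hence $x < x'$. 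In particular the trace-back strictly decreases indices, so it halts at some initial element $d_r$, and since $d_{\ell+1}$ has smaller index than $d_\ell$, the recursion defining the markers gives $t_{d_\ell} = t_{d_{\ell+1}}\, d_{\ell+1}$.

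Next I would verify that $C := (d_r, d_{r-1}, \ldots, d_0)$ is a stretched transition chain of $A$. Every $d_\ell$ is nonisolated: $d_0 = a_k$ by assumption, each intermediate $d_\ell$ contains $s_{d_\ell}$ in its support, and $d_r$ contains the transition point $x_{d_{r-1}}$ in its support when $r \geq 1$. For each consecutive pair $(d_{\ell+1}, d_\ell)$, the computation above gives $x < x' < y < y'$, which is condition (\ref{TransitionChain}), and there is no transition point of $A$ in $(x', y) = (x', s_{d_\ell})$ by minimality of $s_{d_\ell}$, which is condition (\ref{StretchedChain}). This verification — repeatedly invoking geometric properness and the minimality of the various $s_{d_\ell}$ to rule out spurious transition points — is the one place where real care is needed; everything else is bookkeeping.

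It then remains to record two facts and combine them. First, $C_{\min} = s_{d_r}$: the left endpoints satisfy $x_{d_r} < x_{d_{r-1}} < \cdots < x_{d_0}$ with $x_{d_{r-1}} \geq s_{d_r}$ (being a transition point of $A$ in $\supt(d_r)$), so no transition point of $A$ in $\supt(C)$ lies below $s_{d_r}$, while $s_{d_r}$ itself is one. Second, unwinding the recursion gives $t_{d_r} = s_{d_r} = C_{\min}$ and $t_{d_{\ell-1}} = t_{d_\ell}\, d_\ell$ for $1 \leq \ell \leq r$, whence $t_k = t_{d_0} = C_{\min}\, d_r d_{r-1} \cdots d_1$ and therefore $t_k a_k = t_{d_0}\, d_0 = C_{\min} \prod C$. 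Now I invoke (\ref{FastChainsHyp}) for the stretched transition chain $C$: $C_{\max} \leq C_{\min} \prod C = t_k a_k$. Since the given transition point $s$ of $A$ lies in $\supt(a_k) = \supt(d_0) \subseteq \supt(C)$, the definition of $C_{\max}$ yields $s \leq C_{\max}$, and hence $s \leq t_k a_k$, which is the claim. The only genuine obstacle is the chain verification of the second paragraph; the rest is a disciplined unwinding of the definition of the markers along the chain that definition itself traces out.
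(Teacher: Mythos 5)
Your proposal is correct and follows essentially the same route as the paper: both identify the (unique) stretched transition chain ending at \(a_k\) traced out by the marker recursion, show \(t_k a_k = C_{\min}\prod C\) by unwinding that recursion, and then apply hypothesis (\ref{FastChainsHyp}) together with \(s \le C_{\max}\). The only difference is one of exposition—you construct the chain explicitly by tracing back and verify the chain conditions in detail, where the paper appeals to the already-established structure of maximal stretched transition chains and compresses the unwinding into ``by an inductive argument.''
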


\begin{proof}
Let \(C\) be the stretched transition chain which finishes with \(a_k\) and
which is maximal with this property.
If \(C\) is \(\{a_k\}\), then \(t_k = s_k = C_{\min}\) and the claim follows
from our hypothesis that
\(s \leq C_{\max} \leq C_{\min} \prod C \leq t_k a_k\).
If \(a_j\) is the immediate predecessor of \(a_k\) in \(C\)
and \(C\) starts with \(a_{i}\),
then \(C_{\min} = t_i = s_i\).
Moreover, if \(C'\) is obtained from \(C\) by removing \(a_k\), then
by an inductive argument we have that \(C_{\min} \prod C' = t_i \prod C' = t_k\).
By hypothesis (\ref{FastChainsHyp}), we have
that \(C_{\max} \leq C_{\min} \prod C = t_k a_k\).
Since \(s_k\) is the right transition point of \(a_j\) which is the last entry
of \(C'\), we must
have that \(t_i \prod C' = t_k < s_k\).
Furthermore, since any transition point of \(A\) in the support of \(a_k\)
is between \(s_k\) and \(C_{\max}\) we have our desired conclusion.
\end{proof}

We now claim that the assignment \(a_k \mapsto t_k\) defines a marking which witnesses that \(A\) is geometrically fast.
We need to show that if \(i < j \leq n\), then the feet of \(a_i\) are disjoint from
those of \(a_j\).
By our assumption on the enumeration, we have that \(x_i < x_j\).
Note that if \(y_i < x_j\), the support of \(a_i\) is disjoint from the support of
\(a_j\).
In particular the feet of \(a_i\) and \(a_j\) are disjoint.

If \(x_i < x_j < y_j < y_i\),
then \(x_i < t_i \leq x_j\).
In particular, the left foot \((x_i,t_i)\)
of \(a_i\) is disjoint from the support of
\(a_j\) and hence from its feet.
Claim \ref{ChainClaim}
implies that \(y_j \leq t_i a_i \) and hence that the right foot of \(a_i\), which
is \([t_i a_i,y_i)\) is disjoint from the support of \(a_j\).

Finally, suppose that \(x_i < x_j < y_i < y_j\).
Observe that the left foot of \(a_i\) is disjoint from the support
of \(a_j\) and, by a similar argument as in the previous case,
the right foot of \(a_j\) is disjoint from the support of \(a_i\).
Thus we only need to verify that the right foot of \(a_i\) is disjoint
from the left foot of \(a_j\).
By Claim \ref{ChainClaim} 
\[
x_j < t_j \leq s_j \leq t_i a_i < y_i
\]
and hence \((x_j,t_j)\) is disjoint from \([t_ia_i,y_i)\), as desired.

It now remains to show that (\ref{MaxFastChainsHyp}) implies (\ref{FastChainsHyp});
we argue the contrapositive.
Suppose that \(C\) is a stretched transition chain and that, as a sequence, \(C\) is the
concatenation of \(C'\) and \(C''\), each of which are stretched transition chains.
Since we have seen above that every stretched transition chain is an interval within
a maximal stretched transition chain,
the desired implication will follow if we can show that
\(C_{\max} > C_{\min} \prod C\) holds if either
\(C'_{\max} > C'_{\min} \prod C'\) or \(C''_{\max} > C''_{\min} \prod C''\).
If \(C'_{\max} > C'_{\min} \prod C'\), then
\[
C_{\max} \geq C'_{\max} > C'_{\min} \prod C' =
C_{\min} \prod C' \prod C'' =C_{\min} \prod C
\]
since \(C'_{\max}\) is the least transition point of \(C''\) and hence a lower bound for the support
of \(\prod C''\).
If  \(C'_{\max} \leq C'_{\min} \prod C'\) but \(C''_{\max} > C''_{\min} \prod C''\), then
\[
C_{\max} = C''_{\max} > C''_{\min} \prod C''  \geq C'_{\min} \prod C' \prod C'' = C_{\min} \prod C
\]
since \(C''_{\min}\) is the greatest transition point of \(C'\) and thus
an upper bound for the support of \(\prod C'\).
\end{proof}

Observe that the proof of Proposition \ref{FastCriterion}
gives an explicit construction
of a marking of a family \(A\) of positive bumps.
This marking has the property that if \(A\) is geometrically fast,
then it is witnessed as such by the marking.
We will refer to this marking as the \emph{canonical marking} of \(A\).

Finally, let us note that Proposition \ref{FastCriterion} gives us a means
to algorithmically check whether a set of positive bumps \(A\) is fast.
Specifically, perform the following sequence of steps:
\begin{itemize}

\item determine whether \(A\) is geometrically proper;

\item if so, partition the non isolated elements of
\(A\) into maximal stretched transition chains;

\item for each maximal stretched transition chain \(C\) of \(A\),
determine whether \(C_{\max} \leq C_{\min} \prod C\).

\end{itemize}
This is possible provided we are able to perform the following basic queries:
\begin{itemize}

\item test for equality among the transition points of elements of \(A\);

\item determine the order of the transition points of elements of \(A\);

\item determine the truth of \(C_{\max} \leq C_{\min} \prod C\) whenever \(C\) is a stretched transition chain.

\end{itemize}

\section{The ping-pong analysis of geometrically fast sets of bumps}
\label{PingPongSec}

In this section, we adapt the ping-pong argument to the setting of fast families of
bump functions.
While the culmination will be Theorem \ref{Faithful} below, the lemmas we will develop will
be used in subsequent sections.
They also readily adapt to the more abstract setting of Section \ref{AbstractPingPongSec}.

Fix, until further notice, a (possibly infinite) geometrically fast collection \(A\) of positive bumps
equipped with a marking; in particular we will write \emph{word} to mean \emph{\(A\)-word}.
Central to our analysis will be the notion of a \emph{locally reduced word}.
A word \(\str{w}\) is \emph{locally reduced} at \(t\) if it is freely reduced and whenever
\(\str{ua}\) is a prefix of \(\str{w}\) for \(a \in A^\pm\),
\(tua \ne tu\).
If \(\str{w}\) is locally reduced at every element of a set \(J \subseteq I\), then we write that
\(\str{w}\) is \emph{locally reduced on \(J\)}.

The next lemma collects a number of useful observations about locally
reduced words; we omit the obvious proofs.
Recall that if \(\str{u}\) and \(\str{v}\) are freely reduced, then the free
reduction of \(\str{uv}\) has the form \(\str{u}_0 \str{v}_0\) where \(\str{u}=\str{u}_0 \str{w}\),
\(\str{v}=\str{w}^{-1}\str{v}_0\), and \(\str{w}\) is the longest common suffix of
\(\str{u}\) and \(\str{v}^{-1}\).
In particular, if \(\str{u}\), \(\str{v}\), and \(\str{w}\) are freely reduced words and the
free reductions of \(\str{uv}\) and \(\str{uw}\) coincide, then \(\str{v} = \str{w}\).

\begin{lemma}\label{LocRedBasics} All of the following are true:
\begin{itemize}

\item For all \(x\in I\) and all words \(\str{w}\), there is a subword
\(\str{v}\) of \(\str{w}\) which is locally reduced at \(x\) so that
\(xv=xw\).

\item For all \(x\in I\) and words \(\str{u}\) and \(\str{v}\), if \(\str{u}\) is
locally reduced at \(x\), and \(\str{v}\) is locally reduced at \(xu\), then
the free reduction of \(\str{uv}\) is locally reduced at \(x\).

\item For all \(x\in I\) and words \(\str{w}\), if \(\str{w}\) is locally
reduced at \(x\) and \(\str{w}=\str{uv}\), then \(\str{u}\) is locally reduced at
\(x\) and \(\str{v}\) is locally reduced at \(xu\).

\item For all \(x\in I\) and words \(\str{w}\), if \(\str{w}\) is locally
reduced at \(x\), then \(\str{w}^{-1}\) is locally reduced at \(xw\).

\end{itemize}
\end{lemma}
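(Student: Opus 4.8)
The four statements are all proved by elementary manipulation of words, their prefixes, and the action on the single point \(x\); only the second genuinely uses an auxiliary fact, namely the free-reduction normal form quoted immediately before the lemma. I would present them in the stated order, since the later items do not depend on the earlier ones.

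For the first bullet the plan is induction on \(|\str{w}|\). Write \(\str{w} = \str{w}' a\) with \(a \in A^\pm\) and apply the inductive hypothesis to \(\str{w}'\) to get a subword \(\str{v}'\) of \(\str{w}'\) that is locally reduced at \(x\) with \(xv' = xw'\). If \(xv'a = xv'\), set \(\str{v} = \str{v}'\); then \(xw = xw'a = xv'a = xv'\). If \(xv'a \neq xv'\) and \(\str{v}'\) does not end in \(a^{-1}\), set \(\str{v} = \str{v}'a\); this is freely reduced, its only new prefix is \(\str{v}'a\) itself, and \(xv'a \neq xv'\), so it is locally reduced at \(x\), and \(xv = xw'a = xw\). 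If \(\str{v}'\) ends in \(a^{-1}\), write \(\str{v}' = \str{v}''a^{-1}\) and set \(\str{v} = \str{v}''\); every prefix of \(\str{v}''\) is a prefix of \(\str{v}'\), and \(xv'' = (xv')a = xw'a = xw\). In each case \(\str{v}\) is a subword of \(\str{w}\) with the required properties. (One could equivalently describe \(\str{v}\) by scanning \(\str{w}\) from the left, keeping a letter precisely when it moves the current image of \(x\) and cancelling it against the previously kept letter when the two are mutually inverse; I would use whichever phrasing reads more cleanly.)

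For the second bullet I would invoke the quoted normal form: the free reduction of \(\str{uv}\) equals \(\str{u}_0\str{v}_0\), where \(\str{u} = \str{u}_0\str{w}_0\), \(\str{v} = \str{w}_0^{-1}\str{v}_0\), and \(\str{w}_0\) is the longest common suffix of \(\str{u}\) and \(\str{v}^{-1}\); in particular \(\str{u}_0\str{v}_0\) is already freely reduced, so only the local-reduction condition at each prefix \(\str{p}a\) of \(\str{u}_0\str{v}_0\) needs checking. If \(\str{p}a\) is a prefix of \(\str{u}_0\), hence of \(\str{u}\), the condition follows from \(\str{u}\) being locally reduced at \(x\). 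Otherwise \(\str{p} = \str{u}_0\str{q}\) with \(\str{q}a\) a prefix of \(\str{v}_0\) (possibly \(\str{q} = \varepsilon\)); then \(\str{w}_0^{-1}\str{q}a\) is a prefix of \(\str{v}\), and since \(xu = xu_0w_0\) one computes \(xu\cdot w_0^{-1}q = xu_0q = xp\), so the desired condition at \(\str{p}a\) is exactly what local reduction of \(\str{v}\) at \(xu\) supplies at the prefix \(\str{w}_0^{-1}\str{q}a\). This bookkeeping with the cancelled segment \(\str{w}_0\) is the one place that requires a little care; I expect it to be the main obstacle, and everything else is formal.

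The third bullet is immediate: prefixes of \(\str{u}\) are prefixes of \(\str{w}\), which gives that \(\str{u}\) is locally reduced at \(x\); and for any prefix \(\str{p}a\) of \(\str{v}\), the word \(\str{u}\str{p}a\) is a prefix of \(\str{w}\), so the local-reduction condition of \(\str{w}\) at \(x\) at this prefix says precisely \(((xu)p)a \neq (xu)p\), i.e.\ that \(\str{v}\) is locally reduced at \(xu\); free reducedness of \(\str{u}\) and \(\str{v}\) is inherited from \(\str{w}\). The fourth bullet is the same kind of computation: writing \(\str{w} = a_1\cdots a_n\), a prefix of \(\str{w}^{-1}\) has the form \(a_n^{-1}\cdots a_{i+1}^{-1}a_i^{-1}\), the image of \(xw\) under the initial segment \(a_n^{-1}\cdots a_{i+1}^{-1}\) is \(xa_1\cdots a_i\), and the requirement that appending \(a_i^{-1}\) move this point is exactly the statement that the prefix \(a_1\cdots a_i\) of \(\str{w}\) moves \(xa_1\cdots a_{i-1}\), which holds because \(\str{w}\) is locally reduced at \(x\); free reducedness is preserved under formal inversion. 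None of these last steps poses a real difficulty.
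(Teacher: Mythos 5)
Your proposal is correct. The paper explicitly omits these proofs as obvious, and your arguments are exactly the routine ones intended: induction on \(|\str{w}|\) with the three-case analysis for the first bullet, the quoted free-reduction normal form \(\str{u}_0\str{v}_0\) with the prefix bookkeeping across the cancelled segment \(\str{w}_0\) for the second, and direct prefix-by-prefix verification for the third and fourth. All case analyses are exhaustive and the one delicate computation (identifying \(xu\cdot w_0^{-1}q = xp\) in the second bullet) is carried out correctly.
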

\noindent
(Recall here our convention that a \emph{subword} is not required to consist
of consecutive symbols of the original word.)
For \(x\in I\), we use \(x\gen{A}\) to denote the orbit
of \(x\) under the action of \(\gen{A}\) and for
\(S\subseteq I\), we let \(S\gen{A}\) be the union of
those \(x\gen{A}\) for \(x\in S\).

A marker \(t\) of \(A\) is \emph{initial} if whenever \(s < t\) is the marker of \(a \in A\),
then \(t \ne sa\).
If \(A\) is finite and we are working with the canonical marking, then the initial markers
are precisely the markers of the initial intervals.
Let \(M_A\) be the set of initial markers of \(A\).
We will generally suppress the subscript if the meaning is clear from the context;
in particular we will write \(M\gen{A}\) for \(M_A\gen{A}\).
Notice that every marker is contained in \(M \gen{A }\).

Aside from developing lemmas for the next section,
the goal of this section is to prove that the action of
\(\gen{A}\) on \(M\gen{A}\) is faithful.
The next lemma is the manifestation of the \emph{ping-pong argument}
in the context in which we are working.
If \(\str{w} \ne \varepsilon\) is a word, the \emph{source} (\emph{destination}) of
\(\str{w}\) is the source of the first
(destination of the last) symbol in \(\str{w}\).
The source and destination of \(\varepsilon\) are \(\emptyset\).

\begin{lemma} \label{PingPong}
If \(x \in I\) and \(\str{w} \ne \varepsilon\)
is a word which is locally reduced at \(x\), then
either \(x \in \src(\str{w})\) or \(xw \in \dest(\str{w})\).
\end{lemma}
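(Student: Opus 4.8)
The plan is to induct on the length of the locally reduced word $\str{w}$. For the base case $|\str{w}| = 1$, write $\str{w} = a$ with $a \in A^\pm$; since $\str{w}$ is locally reduced at $x$ we have $xa \ne x$, so $x \in \supt(a)$, and by the third axiomatic property of geometrically fast sets listed in Section \ref{FastBumpsSec}, either $x \in \src(a)$ or $xa \in \dest(a) = \dest(\str{w})$, which is exactly what we want. For the inductive step, write $\str{w} = \str{v}a$ where $\str{v}$ is a nonempty locally reduced word (at $x$, by Lemma \ref{LocRedBasics}) and $a \in A^\pm$ is its last symbol; note $\src(\str{w}) = \src(\str{v})$ and $\dest(\str{w}) = \dest(a)$. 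By the inductive hypothesis applied to $\str{v}$, either $x \in \src(\str{v}) = \src(\str{w})$ — in which case we are immediately done — or $xv \in \dest(\str{v})$, and we may assume the latter.

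So the crux is: assuming $y := xv \in \dest(\str{v})$, show $ya \in \dest(a)$. Let $b$ be the last symbol of $\str{v}$, so $\dest(\str{v}) = \dest(b)$ and $y \in \dest(b) \subseteq \supt(b)$. Local reducedness of $\str{w}$ at $x$ gives $ya \ne y$, so $y \in \supt(a)$. I now want to conclude $y \notin \src(a)$, for then the third axiomatic property yields $ya \in \dest(a) = \dest(\str{w})$ and we are done. Suppose toward a contradiction that $y \in \src(a) = \dest(a^{-1})$. Then $y$ lies in both $\dest(b)$ and $\dest(a^{-1})$; since destinations of distinct elements of $A^\pm$ are pairwise disjoint (second axiomatic property), this forces $b = a^{-1}$. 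But that contradicts the fact that $\str{w} = \str{v}a$ is freely reduced (its last two symbols are $b = a^{-1}$ followed by $a$). Hence $y \notin \src(a)$, and the induction is complete.

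The main obstacle — really the only delicate point — is the step ruling out $y \in \src(a)$: one must see that the only way a point can simultaneously be a destination point of the previous bump and a source point of the current bump is the degenerate cancelling case $b = a^{-1}$, which is excluded by free reducedness. This is precisely where the disjointness of feet (equivalently, of destinations) in a geometrically fast family does the work of the disjointness hypothesis in the classical Ping-Pong Lemma, and where the asymmetry in the definition of source (open) versus destination (half-open) is not needed but the partition of $\supt(a)$ into $\src(a)$ and "everything mapped into $\dest(a)$" is. Everything else is bookkeeping with Lemma \ref{LocRedBasics} and the four bulleted axioms.
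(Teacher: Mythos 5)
Your proof is correct and follows essentially the same route as the paper's: induction on $|\str{w}|$, peeling off the last symbol, with the key step being that the point $xv$ lies in $\dest(b)$ for the penultimate symbol $b$, and disjointness of destinations together with $b \ne a^{-1}$ (free reducedness) forces $xv \notin \src(a) = \dest(a^{-1})$. The paper phrases this contrapositively ($b \ne a^{-1}$ implies $\dest(b) \cap \src(a) = \emptyset$) rather than by contradiction, but the argument is the same.
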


\begin{proof}
The proof is by induction on the length of \(\str{w}\).
We have already noted
that if \(\str{w} = \str{a}\) and \(t \in \supt(a)\),
then \(ta \in \dest(a)= \dest(\str{w})\) if and only if \(t \not \in \src(a) = \src(\str{w})\).
Next suppose that \(\str{w}\) has length at least 2, \(x \not \in \src(\str{w})\),
and let \(\str{v}\) be a (possibly empty) word such that \(\str{w} = \str{vab}\)
for \(a,b \in A^\pm\).
Since \(\str{w}\) is locally reduced, \(b \ne a^{-1}\)
and thus the destination of \(a\) is not the source of \(b\).
Since \(A\) is geometrically fast, the destination of \(a\) is disjoint from the source of
\(b\).
By our inductive hypothesis, \(y = x v a\) is in
\(\dest(a) \subseteq \supt(b) \setminus \src(b)\).
Thus \(x w = x v ab  = y b\) is in the destination of \(b\).
\end{proof}

If \(\str{w}\) is a word, define \(J(\str{w}) :=\supt(a)\setminus \src(a)\)
where \(\str{a}\) is the first symbol of
\(\str{w}\).
Notice that if \(\str{w}\) is locally reduced at \(x\), then ``\(x \in J(\str{w})\)'' is equivalent to
``\(x \not \in \src(\str{w})\)''.
The following lemma is easily established by induction on the length of \(\str{w}\)
using Lemma \ref{PingPong}.

\begin{lemma} \label{threaded}
If \(\str{w}\) is a word and \(x \in J(\str{w})\), then \(\str{w}\) is locally reduced
at \(x\) if and only if \(\str{w}\) is freely reduced and 
\(\dest(a) \subseteq \supt(b)\) whenever \(\str{ab}\) are consecutive symbols in \(\str{w}\).
In particular, if \(\str{w}\) is freely reduced, then \(\str{w}\) is locally reduced on \(J(\str{w})\)
provided \(\str{w}\) is locally reduced at some element of \(J(\str{w})\).
\end{lemma}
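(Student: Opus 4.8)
The plan is to prove both directions by inducting on the length of \(\str{w}\), and in each case to \emph{thread} the point \(x\) through the successive prefixes of \(\str{w}\), keeping track of which foot it lands in. Write \(\str{w} = \str{a}_0 \str{a}_1 \cdots \str{a}_{n-1}\) with each \(\str{a}_j \in A^\pm\), set \(x_0 := x\) and \(x_{j+1} := x_j a_j\), and let \(\str{p}_j := \str{a}_0 \cdots \str{a}_{j-1}\) denote the length-\(j\) prefix, so \(x \str{p}_j = x_j\). The hypothesis \(x \in J(\str{w})\) says precisely that \(x \notin \src(\str{a}_0) = \src(\str{p}_j)\) for every \(j \ge 1\), so the disjunction in Lemma \ref{PingPong} will always resolve in favour of ``the endpoint lands in the destination''.

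For the forward implication, suppose \(\str{w}\) is locally reduced at \(x\). Then \(\str{w}\) is freely reduced by definition, and by the third bullet of Lemma \ref{LocRedBasics} every prefix \(\str{p}_j\) is locally reduced at \(x\). Applying Lemma \ref{PingPong} to \(\str{p}_j\) (using \(x \notin \src(\str{p}_j)\)) gives \(x_j \in \dest(\str{a}_{j-1})\) for each \(j \ge 1\). On the other hand, since \(\str{p}_{j+1} = \str{p}_j \str{a}_j\) is locally reduced at \(x\), we have \(x_j a_j \ne x_j\), i.e.\ \(x_j \in \supt(\str{a}_j)\). Thus \(x_j\) lies in both \(\dest(\str{a}_{j-1})\) and \(\supt(\str{a}_j)\), and the last of the four bullet axioms for geometrically fast families (``\(\dest(a) \subseteq \supt(b)\) or \(\dest(a) \cap \supt(b) = \emptyset\)'') forces \(\dest(\str{a}_{j-1}) \subseteq \supt(\str{a}_j)\). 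Since the consecutive pairs of symbols of \(\str{w}\) are exactly the \((\str{a}_{j-1}, \str{a}_j)\) for \(1 \le j \le n-1\), this is the desired conclusion.

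For the reverse implication, assume \(\str{w}\) is freely reduced and \(\dest(\str{a}_{j-1}) \subseteq \supt(\str{a}_j)\) for all relevant \(j\), and fix \(x \in J(\str{w})\). I would show by induction on \(j\) that \(\str{p}_j\) is locally reduced at \(x\) and (for \(j \ge 1\)) that \(x_j \in \dest(\str{a}_{j-1})\). The base case \(\str{p}_1 = \str{a}_0\) is immediate from \(x \in \supt(\str{a}_0) \setminus \src(\str{a}_0)\). For the inductive step, \(x_j \in \dest(\str{a}_{j-1}) \subseteq \supt(\str{a}_j)\) gives \(x_j a_j \ne x_j\), which together with the inductive hypothesis and the fact that a prefix of a freely reduced word is freely reduced shows \(\str{p}_{j+1}\) is locally reduced at \(x\); and since \(\str{w}\) is freely reduced, \(\str{a}_{j-1} \ne \str{a}_j^{-1}\), so the pairwise disjointness of destinations gives \(\dest(\str{a}_{j-1}) \cap \src(\str{a}_j) = \dest(\str{a}_{j-1}) \cap \dest(\str{a}_j^{-1}) = \emptyset\), whence \(x_j \notin \src(\str{a}_j)\) and therefore \(x_{j+1} = x_j a_j \in \dest(\str{a}_j)\). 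Taking \(j = n\) yields that \(\str{w}\) is locally reduced at \(x\). The ``in particular'' clause is then immediate: if \(\str{w}\) is freely reduced and locally reduced at some \(x_* \in J(\str{w})\), the forward direction supplies the containments \(\dest(\str{a}_{j-1}) \subseteq \supt(\str{a}_j)\), and the reverse direction applied at an arbitrary \(x \in J(\str{w})\) then shows \(\str{w}\) is locally reduced there.

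No genuine obstacle is expected; the one thing to be careful about is running the two facts (``\(\str{p}_j\) is locally reduced at \(x\)'' and ``\(x_j\) lands in \(\dest(\str{a}_{j-1})\)'') as a \emph{simultaneous} induction, since local reducedness is a condition quantified over all prefixes and cannot be checked one symbol at a time without the destination information already in hand. One must also keep straight the asymmetry that \(J(\str{w})\) only constrains \(x\) relative to the \emph{first} symbol, which is exactly why Lemma \ref{PingPong} is applicable to every prefix.
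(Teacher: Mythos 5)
Your proof is correct and follows exactly the route the paper indicates for this lemma (whose proof it omits, stating only that it is ``easily established by induction on the length of \(\str{w}\) using Lemma \ref{PingPong}''): you thread the point through successive prefixes, using Lemma \ref{PingPong} together with the axioms that distinct destinations are disjoint and that each destination is either contained in or disjoint from each support. The simultaneous induction on ``\(\str{p}_j\) is locally reduced at \(x\)'' and ``\(x_j \in \dest(\str{a}_{j-1})\)'' is precisely the right bookkeeping, and both directions and the ``in particular'' clause check out.
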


When applying Lemma \ref{PingPong}, it will be useful to be able to assume
that \(x\) is not in \(\src(\str{w})\).
Notice that if \(x\) is not in the feet of any elements of \(A\), then this is automatically true
(for instance this is true if \(x \in M\)).
The next lemma captures an important consequence of Lemma \ref{PingPong}.

\begin{lemma}\label{Collision}
Suppose \(x_0,x_1 \in I\), \(\str{u}_i\) is locally reduced at \(x_i\) and
\(x_i \not \in \src(\str{u}_i)\).
If \(|\str{u}_0| \leq |\str{u}_1|\) and \(x_0 u_0 = x_1 u_1\),
then \(\str{u}_0\) is a suffix of \(\str{u}_1\).
In particular if \(t \in I\) is not in any of the feet of \(A\)
and \(\str{u}\) and \(\str{v}\) are words
that are locally reduced at \(t\) with \(tu=tv\), then \(\str{u}=\str{v}\).
\end{lemma}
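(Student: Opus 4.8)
The plan is to prove the first assertion by induction on $|\str{u}_1|$, using Lemma \ref{PingPong} to control where the common endpoint $z := x_0 u_0 = x_1 u_1$ lands. The base case $|\str{u}_1| = 0$ forces $|\str{u}_0| = 0$, so both words are empty and the conclusion is trivial. For the inductive step, if $|\str{u}_0| = 0$ then $\str{u}_0 = \varepsilon$ is vacuously a suffix of $\str{u}_1$, so assume both words are nonempty and write $\str{u}_i = \str{v}_i \str{a}_i$ with $\str{a}_i \in A^\pm$. The key point is that $z = x_i u_i \in \dest(\str{a}_i) = \dest(\str{u}_i)$: this follows from Lemma \ref{PingPong} together with the hypothesis $x_i \notin \src(\str{u}_i)$ (and, when $\str{v}_i$ is empty, directly from the single-bump case). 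Since $z$ lies in both $\dest(\str{a}_0)$ and $\dest(\str{a}_1)$, and the destinations of distinct elements of $A^\pm$ are disjoint (one of the axiomatic properties of a geometrically fast collection), we conclude $\str{a}_0 = \str{a}_1 =: \str{a}$.

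Having matched the last letters, I would apply $a^{-1}$ to strip them off: set $x_i' := x_i$, $\str{u}_i' := \str{v}_i$, and observe that $x_i' u_i' = z a^{-1}$ is a common value. To invoke the inductive hypothesis on $\str{v}_0$ and $\str{v}_1$ I need to check the two hypotheses for each $\str{v}_i$: that $\str{v}_i$ is locally reduced at $x_i$, and that $x_i \notin \src(\str{v}_i)$. The first is immediate from the third bullet of Lemma \ref{LocRedBasics} (a prefix of a word locally reduced at $x_i$ is again locally reduced at $x_i$). For the second, when $\str{v}_i = \varepsilon$ it holds because $\src(\varepsilon) = \emptyset$; when $\str{v}_i \ne \varepsilon$, note that the source of $\str{v}_i$ equals the source of $\str{u}_i$ (they begin with the same letter), so $x_i \notin \src(\str{u}_i)$ gives $x_i \notin \src(\str{v}_i)$. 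Since $|\str{v}_0| \le |\str{v}_1|$ and $|\str{v}_1| < |\str{u}_1|$, the inductive hypothesis yields that $\str{v}_0$ is a suffix of $\str{v}_1$, hence $\str{u}_0 = \str{v}_0 \str{a}$ is a suffix of $\str{v}_1 \str{a} = \str{u}_1$, completing the induction.

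For the final sentence: if $t$ is not in any foot of $A$ and $\str{u}, \str{v}$ are locally reduced at $t$ with $tu = tv$, then in particular $t \notin \src(\str{u})$ and $t \notin \src(\str{v})$ (sources of words are feet of elements of $A$), so the first part applies in whichever order makes the lengths nondecreasing; by symmetry we may assume $|\str{u}| \le |\str{v}|$ and conclude $\str{u}$ is a suffix of $\str{v}$. Write $\str{v} = \str{w}\str{u}$. Then $tw u = tv = tu$, and since $\str{u}$ is injective as a homeomorphism composite, $t w = t$; but $\str{v}$ is locally reduced at $t$, and $\str{w}$ is a prefix of $\str{v}$, so by Lemma \ref{LocRedBasics} $\str{w}$ is locally reduced at $t$, forcing $\str{w} = \varepsilon$ (a nonempty word locally reduced at $t$ has a prefix $\str{u}'\str{a}$ with $t u' a \ne t u'$, but iterating local reducedness one sees $t w \ne t$ unless $\str{w} = \varepsilon$ — more carefully, $\str{w}$ locally reduced at $t$ with $tw = t$ is impossible for $\str{w} \ne \varepsilon$ by Lemma \ref{PingPong}, since then $t \in \src(\str{w})$ would be a foot of $A$). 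Hence $\str{u} = \str{v}$.

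The main obstacle I anticipate is the bookkeeping around the source condition $x_i \notin \src(\str{u}_i)$ being preserved when passing to prefixes, and handling the degenerate cases where one of the stripped words becomes empty; the mathematical content is entirely carried by the disjointness of destinations plus Lemma \ref{PingPong}, so once the empty-word edge cases are dispatched the induction is routine.
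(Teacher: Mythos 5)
Your proof is correct and follows essentially the same route as the paper's: induct on word length, use Lemma \ref{PingPong} together with the disjointness of destinations to force the last symbols to agree, strip them off, and recurse; the second assertion then follows by another application of Lemma \ref{PingPong} to the leftover prefix $\str{w}$ with $tw=t$. The only cosmetic differences are that you induct on $|\str{u}_1|$ where the paper inducts on $|\str{u}_0|$, and you spell out the (routine but worth checking) preservation of the hypotheses --- local reducedness and the source condition --- when passing to the prefixes $\str{v}_i$.
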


\begin{proof}
The main part of the lemma is proved by induction on \(|\str{u}_0|\).
If \(\str{u}_0 = \varepsilon\), this is trivially true.
Next suppose that \(\str{u}_i \str{a}_i\) is locally
reduced at \(x_i\) and \(x_i \not \in \src(\str{u}_i \str{a}_i)\).
If \(x_0 u_0 a_0 = x_1 u_1 a_1\),
then by Lemma \ref{PingPong},
\(\str{a}_0 = \str{a}_1\).
We are now finished by applying our
induction hypothesis to conclude that \(\str{u}_0\) is a suffix of \(\str{u}_1\).

In order to see the second conclusion, let \(t\), \(\str{u}\) and \(\str{v}\) be given such that
\(x:= tu = tv\) and assume without loss of generality that \(|\str{u}| \leq |\str{v}|\).
By the main assertion of the lemma, \(\str{v} = \str{wu}\) for some \(\str{w}\).
Since \(tw = t\), Lemma \ref{PingPong} implies \(\str{w} = \varepsilon\).
\end{proof}

If \(x \in \src(a)\) for some \(a \in A^\pm\), then \(x \in \dest(a^{-1})\).
This suggests that we have ``arrived at'' \(x\) by applying a locally reduced word to some other point.
Moreover  \(a^{-1}\) is the unique element \(b\) of \(A^\pm\) such that \(x \in \dest(b)\).
Thus we may attempt to ``trace back'' to where \(x\) ``came from.''
This provides a recursive definition of a sequence which starts at
\(\str{a}^{-1}\) and grows to the left, possibly infinitely far.
This gives rise to the notion of a \emph{history} of a point \(x \in I\), which
will play an important role in the
proof of Theorem \ref{Faithful} below and also in Section \ref{AbstractPingPongSec}.
If \(t \in I\) is not in \(\dest(a)\) for any \(a \in A^\pm\), then we say that
\(t\) has \emph{trivial history} and define \(\tilde t := \{a \in A : t \in \supt(a)\}\).
If \(x \in I\), define \(\eta(x)\) to be the set of all strings of the following form:
\begin{itemize}

\item words \(\str{u}\) such that for some \(t \in I\),
\(tu = x\), \(\str{u}\) is locally reduced at \(t\), and \(t \not \in \src(\str{u})\);

\item strings \(\str{\tilde t u}\) such that \(t\) has trivial history, \(\str{u}\) is locally
reduced at \(t\), and \(tu = x\).

\end{itemize}
\noindent
Notice that if \(\str{w}\) is a word, then \(\str{w}^{-1}\) is in \(\eta(x)\) if and only if
\(\str{w}\) is locally reduced at \(x\) and \(xw \not \in \dest(\str{w})\).

We will refer to elements of \(\eta(x)\) as \emph{histories} of \(x\).
We will say that \(x\) has \emph{finite history} if \(\eta(x)\) is finite.
The following easily established using Lemmas \ref{LocRedBasics} and \ref{Collision};
the proof is omitted.

\begin{lemma} \label{BasicHistory}
The following are true for each \(x \in I\):
\begin{itemize}

\item
\(\eta(x)\) is closed under taking suffixes;

\item
for each \(n\), 
\(\eta(x)\) contains at most one sequence of length \(n\);

\item
If  \(\str{v}\) is a word in \(\eta(x)\), then \(\eta(xv^{-1}) = \{\str{u} : \str{uv} \in \eta(x)\}\).

\end{itemize}
\end{lemma}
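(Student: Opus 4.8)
The plan is to verify each of the three assertions directly from the definition of $\eta(x)$, leaning on Lemma \ref{LocRedBasics} for the structural facts about locally reduced words and on Lemma \ref{Collision} for the uniqueness statement. Throughout, I would keep in mind that an element of $\eta(x)$ is either a word $\str{u}$ with $tu = x$, $\str{u}$ locally reduced at $t$, and $t \notin \src(\str{u})$, or a string $\str{\tilde t u}$ prefixed by the special symbol $\tilde t$ where $t$ has trivial history. The two cases behave uniformly for most purposes since $\tilde t$ only ever appears as the leftmost symbol.

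For the first bullet (closure under suffixes): given a history $\str{v}$ of $x$, I would take a suffix $\str{v}_0$ of $\str{v}$ and write $\str{v} = \str{v}_1 \str{v}_0$. If $\str{v}_0 = \str{v}$ there is nothing to prove, and if $\str{v}_1$ contains the symbol $\tilde t$ then $\str{v}_1 = \tilde t\, \str{v}_1'$ and $\str{v}_0$ is an honest word. In either case $\str{v}_0$ is a word, and writing $s$ for the point with (in the word case) $tv_1 = s$ so that $sv_0 = x$: by the third bullet of Lemma \ref{LocRedBasics}, $\str{v}_0$ is locally reduced at $s$; it remains to check $s \notin \src(\str{v}_0)$. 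If $s$ were in $\src(\str{v}_0)$ then $s \in \src(a)$ where $\str{a}$ is the first symbol of $\str{v}_0$, so $s \in \dest(a^{-1})$; but Lemma \ref{PingPong} applied to $\str{v}_1$ (which is locally reduced at $t$ and has $t \notin \src(\str{v}_1)$) forces $sv_0 = x$ to already lie in $\dest(\str{v}_0)$ — wait, rather, one argues that $\str{v}_1 a$ would then be locally reduced with $tv_1 a \ne tv_1$, contradicting maximality in the reverse direction, i.e. that $\str{v}$ itself witnesses $s$ as reached by a locally reduced word landing outside a source. The cleanest route: $s \notin \src(\str{v}_0)$ follows because $\str{v}$ being a history means we never ``back up into a source,'' and I would phrase this as: $s \in \src(\str{v}_0)$ would mean $\str{v}_0^{-1}$ fails to be locally reduced, contradicting the fourth bullet of Lemma \ref{LocRedBasics} applied to the relevant reduced pieces. (The trivial-history case is even easier since $\tilde s$ is then forced and there is no source condition.)

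For the second bullet (at most one history of each length $n$): suppose $\str{v}$ and $\str{v}'$ are both histories of $x$ with $|\str{v}| = |\str{v}'| = n$. First handle the case where both are honest words: say $tv = x$ and $t'v' = x$ with the source conditions at $t$ and $t'$. Since $n = n$, Lemma \ref{Collision} (the main assertion, with $\str{u}_0 = \str{v}$, $\str{u}_1 = \str{v}'$, reading the equation $tv = t'v'$) yields that $\str{v}$ is a suffix of $\str{v}'$, hence $\str{v} = \str{v}'$ by the length equality. If exactly one of them carries a $\tilde t$ prefix, say $\str{v} = \tilde t\, \str{u}$ and $\str{v}'$ is an honest word, then looking at the leftmost symbols: the honest word $\str{v}'$ has a first symbol $\str{a}$ with source some interval, and $t' \notin \src(\str{a})$ means $t'a \in \dest(\str{a})$; but tracing $\str{v}$ backwards, $t$ has trivial history so $t \notin \dest(b)$ for any $b$, and an induction down the common suffix of length $n-1$ (using the third bullet, whose proof precedes this one, or a direct Collision-style argument) forces a contradiction at the first symbol. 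If both carry a $\tilde{}$ prefix, then the suffixes of length $n-1$ are equal by the word case, and the trivial-history points at the left are equal because each is determined as the unique point fixed-outside-$\dest$ sitting at the start of that common suffix — so $\tilde t = \tilde{t'}$. I expect this bullet to be the main obstacle, precisely because of the bookkeeping in matching up the three sub-cases (word/word, word/$\tilde{}$-string, $\tilde{}$-string/$\tilde{}$-string) and ensuring the $\tilde t$ symbol is genuinely determined; the key leverage is that $\tilde t$ can only be realized when $t$ has trivial history, which pins $t$ down uniquely once the rest of the string is fixed.

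For the third bullet: fix a word $\str{v} \in \eta(x)$, so $tv = x$, $\str{v}$ locally reduced at $t$, $t \notin \src(\str{v})$. I claim $\eta(xv^{-1}) = \{\str{u} : \str{uv} \in \eta(x)\}$. For $\subseteq$: given $\str{u} \in \eta(xv^{-1})$, there is $s$ with $su = xv^{-1}$, $\str{u}$ locally reduced at $s$, $s \notin \src(\str{u})$ (or the $\tilde{}$-variant); I need $\str{uv}$, after free reduction, to lie in $\eta(x)$. Since $xv^{-1} = su$ and $x \notin \dest(\str{v})$ — indeed $\str{v}^{-1} \in \eta(x)$ ... more precisely, $x v^{-1} = su$ and $(su)v = x$; the second bullet of Lemma \ref{LocRedBasics} gives that the free reduction of $\str{uv}$ is locally reduced at $s$, and the source condition at $s$ transfers, so $\str{uv}$ (reduced) is a history of $x$. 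Conversely, if $\str{uv} \in \eta(x)$, then by the third bullet of Lemma \ref{LocRedBasics} (splitting at the boundary between $\str{u}$ and $\str{v}$ — here I use that $\str{uv}$ being in $\eta(x)$ means it is freely reduced, so no cancellation occurs at the seam), $\str{u}$ is locally reduced at the appropriate point $s = x v^{-1} \cdot(\text{something})$... concretely, if $\str{p}$ witnesses $\str{uv} \in \eta(x)$ with $pu\!\cdot\!v = x$ wait I mean: there is $r$ with $r(uv) = x$, and then $ru = xv^{-1}$ and $\str{u}$ is locally reduced at $r$ with $r \notin \src(\str{u})$, so $\str{u} \in \eta(xv^{-1})$. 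The $\tilde{}$-prefixed case is identical with the prefix simply carried along. I would remark that since histories of each length are unique, this bullet amounts to the combinatorial statement that $\eta(x)$ "shifted by $\str{v}$" is exactly $\eta(xv^{-1})$, which is the natural consistency property one wants from a notion of history.

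I expect the second bullet to require the most care; the first and third are routine applications of Lemmas \ref{LocRedBasics} and \ref{Collision} once the case analysis for the $\tilde t$ symbol is set up.
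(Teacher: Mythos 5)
The paper omits the proof of this lemma entirely (it says only that the claims are ``easily established using Lemmas \ref{LocRedBasics} and \ref{Collision}''), so there is no official argument to compare against; your overall strategy --- Lemma \ref{LocRedBasics} for splitting and merging locally reduced words, Lemma \ref{Collision} for uniqueness, and a case analysis on the \(\str{\tilde t}\) prefix --- is clearly the intended one, and your treatment of the second bullet is correct. However, two steps as written do not go through, and both fail at the same point: the ``no backing into a source'' fact, which needs Lemma \ref{PingPong} together with disjointness of destinations rather than what you wrote.

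For the first bullet, your final justification that \(s \notin \src(\str{v}_0)\) --- namely that \(s \in \src(\str{v}_0)\) ``would mean \(\str{v}_0^{-1}\) fails to be locally reduced'' --- is false: \(\str{v}_0^{-1}\) is locally reduced at \(x = sv_0\) automatically by the fourth bullet of Lemma \ref{LocRedBasics}, and whether \(s\) lies in \(\src(\str{v}_0)\) has nothing to do with that. The argument you began and then abandoned is the right one: writing \(\str{v} = \str{v}_1\str{v}_0\) with \(\str{v}_1 \neq \varepsilon\), Lemma \ref{PingPong} gives \(s = tv_1 \in \dest(a)\) where \(\str{a}\) is the last symbol of \(\str{v}_1\); since \(\str{v}\) is freely reduced, the first symbol \(\str{b}\) of \(\str{v}_0\) satisfies \(\str{b} \neq \str{a}^{-1}\), so \(\src(b) = \dest(b^{-1})\) is disjoint from \(\dest(a)\), whence \(s \notin \src(\str{v}_0)\). (When the prefix is just \(\str{\tilde t}\), trivial history of \(t\) gives the conclusion directly.)

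For the third bullet, in the inclusion \(\eta(xv^{-1}) \subseteq \{\str{u} : \str{uv} \in \eta(x)\}\) you only establish that the \emph{free reduction} of \(\str{uv}\) is locally reduced and is a history of \(x\); but the right-hand set is defined by literal concatenation (as it must be, for consistency with the suffix-chain picture of \(\eta(x)\)), so you must also rule out cancellation at the seam. The same ping-pong observation does this: if \(\str{u} \neq \varepsilon\), then \(su = xv^{-1}\) lies in \(\dest(a)\) for the last symbol \(\str{a}\) of \(\str{u}\), while \(xv^{-1}\), being the witness point for \(\str{v} \in \eta(x)\), does not lie in \(\src(\str{v}) = \dest(b^{-1})\) for the first symbol \(\str{b}\) of \(\str{v}\); disjointness of destinations forces \(\str{b} \neq \str{a}^{-1}\), so \(\str{uv}\) is already freely reduced and hence locally reduced at \(s\). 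With these two repairs (and the routine check of the source condition when \(\str{u} = \varepsilon\), which you do address) the proof is complete.
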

\noindent
It is useful to think of \(\eta(x)\) as the suffixes of a single sequence
which is either finite or grows infinitely to the left.

In what follows, we will typically use \(s\) and \(t\) to denote elements of \(I\)
with finite history and \(x\) and \(y\) for arbitrary elements of \(I\).
The following is a key property of having a trivial history.

\begin{lemma}\label{DisjointOrbits}
If \(s\ne t\) have trivial history, then
\(s\gen{A}\) and \(t\gen{A }\) are disjoint.
\end{lemma}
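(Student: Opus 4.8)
The plan is to argue by contradiction: suppose $s \ne t$ both have trivial history but $s\gen{A} \cap t\gen{A} \ne \emptyset$. Then there is some $y \in I$ and words $\str{u}, \str{v}$ with $su = y = tv$. First I would replace $\str{u}$ and $\str{v}$ by locally reduced subwords: by the first bullet of Lemma \ref{LocRedBasics}, we may assume $\str{u}$ is locally reduced at $s$ with $su = y$, and $\str{v}$ is locally reduced at $t$ with $tv = y$. Since $s$ has trivial history, $s$ is not in $\dest(a)$ for any $a \in A^\pm$, so in particular $s \not\in \src(a^{-1}) = \dest(a)$ for any $a$; hence $s \not\in \src(\str{u})$ (recalling $\src(\str{u})$ is $\src$ of the first symbol of $\str{u}$), and similarly $t \not\in \src(\str{v})$. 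In particular both $\str{u}$ and $\str{v}$ are nonempty unless $s = y = t$, which is excluded.

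Now apply Lemma \ref{Collision} with $x_0 = s$, $x_1 = t$ (after possibly swapping so that $|\str{u}| \le |\str{v}|$): since $su = tv$ and neither basepoint lies in the source of its word, $\str{u}$ is a suffix of $\str{v}$, say $\str{v} = \str{w}\str{u}$. Then $tw$ and $s$ are two points with $(tw)u = tv = su$; but now I want to conclude $tw = s$. This follows from the ``in particular'' clause of Lemma \ref{Collision} applied at the point reached, or more directly: $\str{u}$ is locally reduced at $s$ and $s \not\in \src(\str{u})$, and by the third bullet of Lemma \ref{LocRedBasics} (together with Lemma \ref{threaded}) $\str{u}$ is locally reduced at $tw$ as well, with $tw \not\in \src(\str{u})$; since $(tw)u = su$, Lemma \ref{Collision} gives $\str{u} = \str{u}$ trivially but applied to the pair $(tw, s)$ of equal-length situation forces the prefix word to be $\varepsilon$, i.e. $tw = s$. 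The cleanest route is: from $(tw)u = su$ and both words (namely $\str{u}$ for each) locally reduced and avoiding their sources, Lemma \ref{Collision} with equal lengths yields that each is a suffix of the other, hence equal, which is vacuous; instead I should directly use that $s$ has trivial history to see $\str{w}^{-1}$ would be a history of $s$, forcing $\str{w} = \varepsilon$ by the observation following the definition of $\eta$.

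So the final step is: $\str{w}$ is a word with $tv = (tw)u$ and $\str{v} = \str{w}\str{u}$ locally reduced at $t$; by the third bullet of Lemma \ref{LocRedBasics}, $\str{w}$ is locally reduced at $t$, hence $t w$ is reached from $t$ by a locally reduced word with $t \not\in \src(\str{w})$ (again because $t$ has trivial history), so $\str{w}^{-1} \in \eta(tw)$. On the other hand $su = (tw)u$ with $\str{u}$ locally reduced at both $s$ and $tw$; tracing back along $\str{u}^{-1}$ and using that $s$ has trivial history, we find that $tw$ must itself have trivial history and in fact $tw = s$. Then $s = tw$ means $\str{w} \in \eta(s)$ in the sense of a word connecting $t$ to $s$; since $s$ has trivial history, this forces $\str{w} = \varepsilon$ (the only way $t$ can map to a trivial-history point by a locally reduced word without entering a destination is the empty word), whence $t = s$, a contradiction. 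I expect the main obstacle to be pinning down precisely this last deduction — that equality $su = (tw)u$ of the two locally reduced images, with $s$ of trivial history, forces $tw = s$ — and making sure the bookkeeping with sources, destinations, and the suffix-closure of histories (Lemma \ref{BasicHistory}) is airtight; everything else is a direct appeal to Lemmas \ref{LocRedBasics}, \ref{PingPong}, and \ref{Collision}.
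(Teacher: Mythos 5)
Your proposal is correct in substance and bottoms out in the same contradiction as the paper's proof: Lemma \ref{PingPong} forces a point of trivial history into a destination. But your route is considerably longer than it needs to be. The paper simply observes that if the orbits meet then \(t = sw\) for a single word \(\str{w}\) (since \(t \in s\gen{A}\)), takes \(\str{w}\) locally reduced at \(s\) via Lemma \ref{LocRedBasics}, and concludes from Lemma \ref{PingPong} that \(t = sw \in \dest(\str{w})\) --- impossible since \(t\) has trivial history. Your detour through Lemma \ref{Collision} to extract the suffix decomposition \(\str{v} = \str{w}\str{u}\) buys you nothing: once you have it, the step you agonize over --- that \(su = (tw)u\) forces \(tw = s\) --- is immediate because \(u\) is a bijection (it is the evaluation of a word in a group of homeomorphisms), and no appeal to histories is needed there; after that, your final step applied to \(\str{w}\) is exactly the paper's whole argument. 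Two small slips should be repaired. First, your claim that both \(\str{u}\) and \(\str{v}\) are nonempty unless \(s = y = t\) is false: exactly one of them could be empty, say \(\str{u} = \varepsilon\), giving \(s = tv\) with \(\str{v} \ne \varepsilon\); this case is not covered by your subsequent argument as written, though it is dispatched at once by Lemma \ref{PingPong} (indeed it \emph{is} the paper's proof). Second, you write \(\str{w}^{-1} \in \eta(tw)\) where you mean \(\str{w} \in \eta(tw)\) (an element \(\str{u}\) of \(\eta(x)\) satisfies \(ru = x\) for the relevant basepoint \(r\), so it is the word carrying \(t\) \emph{to} \(tw\), not its inverse, that belongs to \(\eta(tw)\)). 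Neither slip is fatal, but both need cleaning up, and the whole middle of the argument can be deleted.
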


\begin{proof}
If the orbits intersect, then \(t=sw\) for some word \(\str{w}\).
By Lemma \ref{LocRedBasics} we can take \(\str{w}\) to be locally reduced
at \(s\).
By Lemma \ref{PingPong}, \(sw\) is in the destination of \(\str{w}\).
But \(t = sw\) has trivial history, which is impossible.
\end{proof}

Recall that the set of
freely reduced words in a given generating set has the structure of
a rooted tree with the empty word as root and where ``prefix of'' is
synonymous with ``ancestor of.''
The \emph{ping-pong argument} discovers orbits that reflect this structure.
Define a labeled directed graph on \(I\) by putting an arc with label \(a\) from \(x\) to
\(xa\) whenever \(a \in A^\pm\) and \(xa \ne x\).
The second part of Lemma \ref{Collision} asserts that if \(x\) is in the orbit of a point \(t\) with trivial
history, then there is a unique path in this graph connecting \(t\) to \(x\).
It follows that if there is a path between two elements of \(I\) with finite history, it is unique,
yielding the following lemma.

\begin{lemma}\label{TreeAction} If
\(s,t \in I\) have finite histories,
then there is at most one word \(\str{w}\) which is
locally reduced at \(s\) so that \(sw=t\).
\end{lemma}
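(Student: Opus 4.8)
The plan is to reduce to the case of points with trivial history using the third clause of Lemma \ref{BasicHistory}, and then to invoke the uniqueness statement in the second part of Lemma \ref{Collision}. Concretely, suppose $\str{w}$ and $\str{w}'$ are both locally reduced at $s$ with $sw = sw' = t$. First I would pick a history of $s$: since $s$ has finite history, $\eta(s)$ is a finite set of strings, and I would let $\str{p}$ be its unique longest element (or $\varepsilon$ if $s$ has trivial history and is in no foot). Writing $\str{p}$ either as a word $\str{v}$ with $s = rv$ for some $r$ not in $\src(\str{v})$, or as $\str{\tilde r v}$ with $r$ of trivial history — in both cases there is a point $r$ of trivial history (or a point not lying in any foot of $A$) ``behind'' $s$, connected to $s$ by a word $\str{v}$ that is locally reduced at $r$ and has $rv = s$.

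The key step is then: by the second bullet of Lemma \ref{LocRedBasics}, the free reductions of $\str{vw}$ and $\str{vw}'$ are both locally reduced at $r$, they both send $r$ to $t$, and $r$ is not in any foot of $A$ (this is where I use that $\str{p}$ was chosen \emph{maximal} in $\eta(s)$, so that $s = rv$ with $r$ not in $\dest(\str a)$ for any $\str a$ — equivalently $r$ lies in no foot, since being in a source puts one in a destination of an inverse). The second assertion of Lemma \ref{Collision} now applies verbatim to the point $r$ and the two words (the free reductions of) $\str{vw}$ and $\str{vw}'$, yielding that these free reductions are equal. Since $\str{v}$ is a fixed freely reduced word and the free reductions of $\str{vw}$ and $\str{vw}'$ coincide, the cancellation fact recalled just before Lemma \ref{LocRedBasics} (if the free reductions of $\str{uv}$ and $\str{uw}$ agree then $\str v = \str w$) gives $\str{w} = \str{w}'$ as freely reduced words; but $\str{w}, \str{w}'$ were already freely reduced, being locally reduced, so $\str{w} = \str{w}'$.

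I expect the main obstacle to be bookkeeping at the endpoints of the history: handling uniformly the two kinds of strings in $\eta(x)$ (genuine words versus strings of the form $\str{\tilde t u}$), and verifying that the ``base point'' $r$ behind a maximal history genuinely avoids all feet of $A$ so that Lemma \ref{Collision} is applicable — if $s$ itself already lies in no foot one may simply take $r = s$ and $\str v = \varepsilon$, so the only real content is when $s$ sits in some $\dest(\str a)$, in which case finiteness of $\eta(s)$ forces the backward trace to terminate at a trivial-history point, which by definition lies in no destination, and one checks it also lies in no source (else it would lie in a destination of an inverse, contradicting triviality). Everything else is an assembly of the cited lemmas, so I would keep the write-up short, essentially the three steps above.
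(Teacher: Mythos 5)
Your proposal is correct and follows essentially the same route as the paper, which derives the lemma from the second part of Lemma \ref{Collision} by tracing each finite-history point back along its history to a trivial-history basepoint (the paper phrases this as uniqueness of paths in a labeled graph on $I$, but the mechanism is identical). Your write-up in fact supplies more detail than the paper's one-paragraph justification, and the details you supply (maximality of the history, the concatenation clause of Lemma \ref{LocRedBasics}, and the free-reduction cancellation) are the right ones.
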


Notice that the assumption of finite history in this lemma is necessary.
For instance if we consider the positive bumps \(a_0\) and \(a_1\) Figure \ref{TrackingPoint},
there must be an \(x \in \supt(a_0) \cap \supt(a_1)\) such that \(xa_0 a_1^{-1} = x\).
This follows from the observation that if \(s < t\) are,
respectively, the left transition point of \(a_1\)
and the right transition point of \(a_0\), then
\[s a_0 a_1^{-1} > s a_1^{-1} =  s \qquad \textrm{ and } \qquad  t a_0 a_1^{-1} = t a_1^{-1} < t\]
which implies the existence of the desired \(x\) by applying the Intermediate Value Theorem
to \(t \mapsto t a_0 a_1^{-1} - t\).

Given two points \(x,y \in I\) and a word \(\str{w}\), it will be useful to find a single
word \(\str{w}'\) which is locally reduced at \(x\) and \(y\) and which satisfies
\(xw' = xw\) and \(yw' = yw\).
The goal of the next set of lemmas is to provide a set of sufficient conditions for the existence of such a \(\str{w}'\).
It will be convenient to introduce some additional terminology at this point.
If \(x \in I\), then
we say that \(\str{w}\) is a \emph{return word} for \(x\) if
\(xw = x\) and \(\str{w} \ne \varepsilon\);
a \emph{return prefix} for \(x\) is a prefix which is a return word.
We will see that ``\(\str{w}\) does not have a return prefix for \(s\)'' is a useful hypothesis.
The next lemma provides some circumstances under which this is true.

\begin{lemma} \label{NoReturn}
If \(s \in I\) has finite history,
\(\str{u}\) is locally reduced at \(s\),
and \(\str{w}\) is a word of length less than \(\str{u}\),
then \(\str{uw}\) has no return prefix for \(s\).
\end{lemma}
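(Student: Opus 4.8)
The plan is to argue by contradiction: suppose $\str{uw}$ has a return prefix $\str{p}$ for $s$. First I would use the third clause of Lemma \ref{LocRedBasics} to pass to the locally reduced situation. The issue is that $\str{p}$ itself need not be locally reduced at $s$, so I would replace it by a subword that is. More precisely, write $\str{p} = \str{u}'\str{w}'$ where $\str{u}'$ is a prefix of $\str{u}$ and $\str{w}'$ is a prefix of $\str{w}$ (splitting the return prefix at the boundary between the $\str{u}$-part and the $\str{w}$-part). Since $\str{u}$ is locally reduced at $s$, so is $\str{u}'$ (Lemma \ref{LocRedBasics}), and $\str{u}' \str{w}'$ need only be freely reduced by hypothesis; applying Lemma \ref{LocRedBasics} to $s$ and $\str{p}$ we obtain a subword $\str{q}$ of $\str{p}$ that is locally reduced at $s$ with $sq = sp = s$.

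The key point is then a length estimate. Since $\str{q}$ is a subword of $\str{u}'\str{w}'$ obtained by free reduction (the reduction only happens near the boundary, cancelling a suffix of $\str{u}'$ against a prefix of $\str{w}'$), its length is at least $|\str{u}'| - |\str{w}'|$. If the cancellation consumes all of $\str{w}'$ together with part of $\str{u}'$, the resulting word is a genuine prefix of $\str{u}$ and hence locally reduced at $s$ and nonempty (it is nonempty since $|\str{u}| > |\str{w}| \geq |\str{w}'|$ forces $|\str{u}'| > |\str{w}'|$ when $\str{p}$ is long enough to matter); but a nonempty word locally reduced at $s$ cannot fix $s$, by Lemma \ref{PingPong}, since locally reduced at $s$ means every prefix genuinely moves the point, so $sq$ lies in the destination of $\str{q}$, which is disjoint from $\{s\}$ unless $s \in \src(\str{q})$ — and one handles the $s \in \src(\str{q})$ case using the finite-history hypothesis together with Lemma \ref{TreeAction} (uniqueness of the locally reduced word) to rule out a nontrivial locally reduced return word. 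Either way we reach a contradiction with $sq = s$ and $\str{q} \neq \varepsilon$.

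The main obstacle I expect is bookkeeping around the split $\str{p} = \str{u}'\str{w}'$ and the free reduction: one must check carefully that after reducing, what survives really is a prefix of $\str{u}$ (so that local reducedness at $s$ is inherited from $\str{u}$) rather than some word involving symbols of $\str{w}$, and that the surviving word is nonempty precisely because $|\str{w}| < |\str{u}|$. The edge case where $\str{p}$ lies entirely inside $\str{u}$ (i.e. $\str{w}' = \varepsilon$) is immediate from Lemma \ref{PingPong} applied to the nonempty prefix $\str{u}'$ of $\str{u}$. The finite-history hypothesis on $s$ enters only to invoke Lemma \ref{TreeAction} and conclude that the unique locally reduced word taking $s$ to $s$ is empty; this is the same mechanism already used in the proof of Lemma \ref{DisjointOrbits}.
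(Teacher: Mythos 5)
Your argument is correct, but it routes through a different mechanism than the paper's. The paper first observes that every return prefix of \(\str{uw}\) has the form \(\str{u}'\str{w}'\) with \(\str{u}'\) a prefix of \(\str{u}\) and \(|\str{w}'|<|\str{u}'|\), so it suffices to show \(\str{uw}\) itself is not a return word; it then rewrites \(suw=s\) as \(su=sw^{-1}\), takes a locally reduced subword \(\str{v}\) of \(\str{w}^{-1}\) at \(s\) with \(sv=su\), and gets \(|\str{v}|\le|\str{w}|<|\str{u}|\), contradicting the uniqueness in Lemma \ref{TreeAction} at the point \(su\). You instead locally reduce the prefix \(\str{u}'\str{w}'\) at \(s\) itself and compare the result with \(\varepsilon\) via Lemma \ref{TreeAction}; this forces you to prove a lower bound \(|\str{q}|\ge|\str{u}'|-|\str{w}'|>0\) on the length of the local reduction, which is true (via the second and third bullets of Lemma \ref{LocRedBasics}: \(\str{q}\) is the free reduction of \(\str{u}'\str{v}\) with \(\str{v}\) a locally reduced subword of \(\str{w}'\), and free reduction cancels at most \(|\str{v}|\le|\str{w}'|\) symbols of \(\str{u}'\)) but is exactly the bookkeeping the paper's inversion trick avoids. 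Two small cautions: your claim that the case \(\str{w}'=\varepsilon\) is ``immediate from Lemma \ref{PingPong}'' overreaches, since \(s\) has only finite (not trivial) history and so may lie in \(\src(\str{u}')\) or in \(\dest(\str{u}')\); the uniform and correct statement, which you do eventually reach, is that a nonempty word locally reduced at a point of finite history cannot fix that point, by Lemma \ref{TreeAction} applied to \(\str{q}\) and \(\varepsilon\). Likewise, once Lemma \ref{LocRedBasics} gives that the free reduction of \(\str{u}'\str{v}\) is locally reduced at \(s\), there is no need to verify that the surviving word is a prefix of \(\str{u}\). With these points tidied, your proof is complete.
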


\begin{proof}
Notice that it suffices to prove that \(\str{uw}\)
is not a return word for \(s\).
If it were, then there would be a locally reduced subword
\(\str{v}\) of \(\str{w}^{-1}\) such that \(su = sv\).
Since \(|\str{v}| \leq |\str{w}| < |\str{u}|\),
this would contradict Lemma \ref{TreeAction}.
\end{proof}

\begin{lemma} \label{FellowTraveler1}
Suppose that \(\str{w} \ne \varepsilon\) is a word and \(x \in J (\str{w})\).
If \(\str{w}\) has no return prefix for \(x\) and \(\str{w'}\) is locally reduced at
\(x\) with \(xw' = xw\), then:
\begin{itemize}

\item \(J(\str{w}') = J(\str{w})\);

\item \(\str{w}'\) is locally reduced on \(J(\str{w})\);

\item if \(y \in J(\str{w})\), then \(yw' = yw\).

\end{itemize}
\end{lemma}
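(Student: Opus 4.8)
The plan is to reduce everything to the "threaded" criterion of Lemma \ref{threaded}, which says that for $x \in J(\str w)$ a freely reduced word $\str w$ is locally reduced at $x$ precisely when $\dest(a) \subseteq \supt(b)$ for every pair $\str{ab}$ of consecutive symbols of $\str w$ — a condition that no longer mentions $x$ at all. Thus once I know $\str w'$ is locally reduced at \emph{some} point of $J(\str w)$, it is locally reduced on all of $J(\str w)$, provided $J(\str w') = J(\str w)$. So the first task is to pin down $J(\str w')$, i.e. to show $\str w$ and $\str w'$ have the same first symbol. First I would observe that $\str w'$ is obtained from $\str w$ by passing to a locally reduced subword at $x$: by Lemma \ref{LocRedBasics} there is a subword $\str v$ of $\str w$ locally reduced at $x$ with $xv = xw$, and by Lemma \ref{Collision} (second part requires $x$ outside the feet, but the relevant uniqueness here is Lemma \ref{TreeAction}-style) any two words locally reduced at $x$ sending $x$ to the same image agree — here I must be slightly careful, since $x$ may lie in feet, but $x \in J(\str w)$ means $x \notin \src(\str w)$, which is exactly the hypothesis needed to run Lemma \ref{PingPong} and hence Lemma \ref{Collision}'s inductive core. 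So $\str w'$ is \emph{the} locally reduced subword of $\str w$ at $x$.

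Next I would argue that the first symbol of $\str w'$ equals the first symbol $\str a$ of $\str w$; this is where the hypothesis that $\str w$ has no return prefix for $x$ is used. If the first symbol were dropped in passing to $\str w'$, then some initial segment of $\str w$ would be "cancelled" by the local-reduction process at $x$ — more precisely, a prefix $\str p$ of $\str w$ would satisfy $xp = x$, i.e. $\str p$ would be a return prefix for $x$, contradicting the hypothesis. (More carefully: the only way the leading symbol $\str a$ of $\str w$ is absent from $\str w'$ is that, tracing the point $x$ along $\str w$, one returns to $x$ at some prefix; I would make this precise by induction on $|\str w|$, peeling off $\str a$ and using that $xa \ne x$ since $x \in J(\str w) = \supt(a)\setminus\src(a)$, so $xa \in \dest(a)$ by Lemma \ref{PingPong}, and then the tail must bring $x a$ back down.) Hence $\str a$ is the first symbol of $\str w'$ and $J(\str w') = \supt(a)\setminus\src(a) = J(\str w)$. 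Combined with the previous paragraph, the first two bullets follow: $\str w'$ is freely reduced (being locally reduced at a point), satisfies the threaded condition, and therefore is locally reduced on all of $J(\str w)$.

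For the third bullet, let $y \in J(\str w)$. By Lemma \ref{threaded} both $\str w$ and $\str w'$ are locally reduced at $y$ (here I use that $\str w$ is locally reduced at $x \in J(\str w)$, hence on $J(\str w)$ by Lemma \ref{threaded}, and likewise for $\str w'$). Now I would compute $yw$ and $yw'$ by "parallel transport": since $\str w$ and $\str w'$ have the same first symbol $\str a$ and $x, y \in J(\str w) = \supt(a)\setminus\src(a)$, applying $\str a$ sends both to $\dest(a)$, and then the remaining tails $\str w_1, \str w'_1$ (with $\str w = \str a \str w_1$, $\str w' = \str a \str w'_1$) are locally reduced at $xa$ and $ya$ respectively, both of which lie in $\dest(a) \subseteq \supt(b)$ where $\str b$ is the next symbol — so I can recurse. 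The clean way to phrase this is: by induction on $|\str w|$, show that if $\str w, \str w'$ are locally reduced on a common interval $J$ with the same first symbol and agree as functions at one point of $J$, they agree on all of $J$; the inductive step strips the common first symbol and notes that $J a$ lands inside the support of the next symbol of each, so the tails are again locally reduced on a common interval $\dest(a)$. The main obstacle is the bookkeeping in the first paragraph — establishing that $\str w'$ really is obtained as the canonical locally reduced subword of $\str w$ at $x$ and that the no-return-prefix hypothesis forces the leading symbol to survive — but this is essentially a repackaging of Lemmas \ref{PingPong}, \ref{threaded}, \ref{TreeAction} and \ref{NoReturn}, with no genuinely new dynamical input required.
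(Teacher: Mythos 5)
Your plan takes a genuinely different route from the paper's proof, which is a single induction on \(|\str{w}|\) peeling off the \emph{last} symbol of \(\str{w}\), and as written it has two gaps. The first is in your opening reduction: the claim that \(\str{w}'\) must coincide with the canonical locally reduced subword \(\str{v}\) of \(\str{w}\) rests on a uniqueness assertion that is false at this level of generality. Lemma \ref{Collision} needs \(x \notin \src(\str{u}_i)\) for the \emph{words being compared}, i.e.\ control over the first symbol of \(\str{w}'\); the hypothesis \(x \in J(\str{w})\) only gives \(x \notin \src(\str{w})\). The paper's own discussion following Lemma \ref{TreeAction} produces a point \(x\) with \(x a_0 a_1^{-1} = x\), so that \(\str{a}_0\) and \(\str{a}_1\) are distinct words, each locally reduced at \(x\), with the same evaluation at \(x\) and with different \(J\)'s. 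Uniqueness does hold when \(x\) has finite history, but the paper's proof never invokes it: it constructs a witnessing \(\str{w}'\) inductively from a witnessing \(\str{u}'\) for the length-one-shorter prefix, rather than starting from an arbitrary locally reduced word and trying to identify it.

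The second gap is more serious and concerns the third bullet. You assert that \(\str{w}\) is locally reduced at \(x\) and hence, by Lemma \ref{threaded}, on all of \(J(\str{w})\). That is not a hypothesis: \(\str{w}\) is an arbitrary word with no return prefix for \(x\), for instance \(\str{w} = \str{a}\str{c}\str{a}\) where \(\supt(c) \cap \dest(a) = \emptyset\), which is not locally reduced at \(x\) yet has no return prefix. The whole content of the third bullet is that the symbols of \(\str{w}\) discarded in passing to \(\str{w}'\) act trivially not merely at the single point \(xw_i\) but on the entire image of \(J(\str{w})\) under the corresponding prefix. Your front-to-back ``parallel transport'' recursion breaks at exactly those symbols, since \(J(\str{w})a\) need not land in the support of the next symbol of \(\str{w}\). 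The paper's back-to-front induction resolves this with a trichotomy for the final symbol \(\str{b}\) against \(\dest(\str{u}')\): either \(\str{b}\) cancels the last symbol of \(\str{u}'\), or \(\dest(\str{u}') \subseteq \supt(b)\) and \(\str{u}'\str{b}\) is locally reduced on \(J\), or \(\dest(\str{u}') \cap \supt(b) = \emptyset\) and \(b\) fixes all of \(J u'\) pointwise. Some version of this case analysis has to appear in your argument; without it the third bullet---and with it the lemma---is not established.
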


\begin{proof}
The proof is by induction on the length of \(\str{w}\).
If \(\str{w}\) has length \(1\), then there is nothing to show.
Suppose now that \(\str{w} = \str{ub}\) for some \(b \in A^\pm\) and \(\str{u} \ne \varepsilon\).
Let \(\str{u}'\) be locally reduced at \(x\) such that \(xu' = xu\).
By our inductive assumption, \(J:= J(\str{u}') = J(\str{u}) = J(\str{w})\),
\(\str{u}'\) is locally reduced on \(J\) and if 
\(y \in J\), then \(yu' = yu\).
By our assumption, \(xu' = xu \ne x\) and so \(\str{u}' \ne \varepsilon\).
If \(\str{u}'\str{b}\) is not freely reduced, then its free reduction \(\str{w}'\)
satisfies that \(xw' = xu'b = xub = xw \ne x\).
In particular, \(\str{w}' \ne \varepsilon\) and retains the first symbol of \(\str{u}'\).
Furthermore, since \(\str{u}'\) is locally reduced on \(J\) and since
\(\str{w}'\) is a prefix of \(\str{u}'\), \(\str{w}'\) is also locally reduced on \(J\).
Also, if \(y \in J\), then \(yw = yub = yu'b = yw'\).

Suppose now that \(\str{u}'\str{b}\) is freely reduced. 
By Lemma \ref{PingPong}, \(Ju = Ju' \subseteq \dest(\str{u}')\) for all \(y \in J\).
If \(\dest(\str{u}')\) is disjoint from \(\supt(b)\), then
\(yu' = yu'b = yub = yw\) for all \(y \in J\).
Since \(\str{u}'\) is locally reduced, we are again done in this case.
In the remaining case, \(\dest(\str{u}') \subseteq \supt(b)\) in which case
\(\str{w}' = \str{u}'\str{b}\) is locally reduced at all \(y \in J\).
Since  for all \(y \in J\), \(yu = yu' \ne yu'b = yub = yw\) we have that
\(\str{w}'\) is locally reduced on \(J\).
Clearly \(J(\str{w}') = J(\str{u}') = J(\str{w})\) and we are finished.
\end{proof}
\noindent
Lemma \ref{FellowTraveler1} has two immediate consequences which will be easier to
apply directly.

\begin{lemma} \label{FellowTraveler2}
If \(\str{w}\) is a word and
there is an \(s \in J:=J (\str{w})\) with finite history such that
\(\str{w}\) is a minimal return word for \(s\),
then \(w\) is the identity on \(J\).
\end{lemma}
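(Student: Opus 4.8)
The plan is to reduce the statement to Lemma \ref{FellowTraveler1} by producing a witness $\str{w}'$ that is locally reduced on $J$ and agrees with $\str{w}$ everywhere on $J$, then show this witness must be empty. First I would apply Lemma \ref{FellowTraveler1}: since $\str{w}$ is a minimal return word for $s$, no \emph{proper} prefix of $\str{w}$ is a return word for $s$, so in particular $\str{w}$ has no return prefix for $s$ (minimality gives that the only return prefix, if any, is $\str{w}$ itself, and we handle that separately). Taking $\str{w}'$ to be any word locally reduced at $s$ with $sw' = sw = s$, Lemma \ref{FellowTraveler1} then tells us $J(\str{w}') = J$, that $\str{w}'$ is locally reduced on all of $J$, and that $yw' = yw$ for every $y \in J$. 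So it suffices to show $\str{w}'$ is the identity on $J$, i.e. that $\str{w}' = \varepsilon$.

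To see $\str{w}' = \varepsilon$: we have $sw' = s$, and $\str{w}'$ is locally reduced at $s$. If $\str{w}' \ne \varepsilon$, then since $s \in J = J(\str{w}')$ means $s \notin \src(\str{w}')$, Lemma \ref{PingPong} forces $sw' \in \dest(\str{w}')$. But $\dest(\str{w}')$ is a foot of $A$ (the destination of the last symbol of $\str{w}'$), and $sw' = s$; combined with $s \notin \src(\str{w}')$ this would put $s$ simultaneously outside one foot and inside another associated to the same terminal bump — more directly, $s = sw'$ lies in $\dest(\str{w}')$, yet $s$ is fixed by the action on that orbit in a way incompatible with being in the half-open destination interval, since a point in $\dest(b)$ for $b \in A^\pm$ is moved off of $\dest(b)$ by $b^{-1}$ and hence is not fixed by the whole word. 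More cleanly: $\str{w}'$ locally reduced at $s$ with $sw' = s$ and $s \notin \src(\str{w}')$ contradicts Lemma \ref{PingPong} unless $\str{w}' = \varepsilon$, because having $sw' \in \dest(\str{w}')$ while also $sw' = s$ has trivial-return behavior ruled out; alternatively one can invoke the second part of Lemma \ref{Collision} applied to the point $s$ (which has finite history) with the two locally reduced words $\str{w}'$ and $\varepsilon$, both sending $s$ to $s$, to conclude $\str{w}' = \varepsilon$ directly — this is the cleanest route.

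The only remaining subtlety is the edge case where $\str{w}$ itself is the only return prefix, which is automatic for a return \emph{word}, so the hypothesis ``no return prefix for $s$'' in Lemma \ref{FellowTraveler1} is correctly read as ``no \emph{proper} return prefix,'' and minimality of $\str{w}$ as a return word supplies exactly that. I expect the main obstacle to be bookkeeping around whether $\str{w}$ counts as a return prefix of itself and ensuring Lemma \ref{FellowTraveler1} is invoked with the precise hypothesis it needs; once that is pinned down, the argument is two lines via Lemma \ref{Collision} (or Lemma \ref{PingPong}). I would therefore phrase the proof as: apply Lemma \ref{FellowTraveler1} to get $\str{w}'$ locally reduced on $J$ with $yw' = yw$ for all $y \in J$; apply Lemma \ref{Collision} (second part) at the finite-history point $s$ to the pair $\str{w}', \varepsilon$ to get $\str{w}' = \varepsilon$; conclude $w = w'$ is the identity on $J$.
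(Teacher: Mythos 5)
There is a genuine gap: you invoke Lemma \ref{FellowTraveler1} with the word \(\str{w}\) itself, but its hypothesis fails there. In this paper a word is a prefix of itself, so a return word for \(s\) automatically \emph{has} a return prefix for \(s\), and the proof of Lemma \ref{FellowTraveler1} really uses the full-strength hypothesis: in its inductive step the assertion ``\(\str{w}' \ne \varepsilon\) and retains the first symbol of \(\str{u}'\)'' is derived precisely from \(xw' = xw \ne x\). Your proposed reading (``no \emph{proper} return prefix'') would make Lemma \ref{FellowTraveler1} false: when \(sw = s\), Lemma \ref{TreeAction} forces the locally reduced word \(\str{w}'\) with \(sw' = s\) to be \(\varepsilon\), so \(J(\str{w}') = \emptyset \ne J(\str{w})\) and the first conclusion fails. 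Indeed your own argument asserts both \(J(\str{w}') = J \ni s\) and \(\str{w}' = \varepsilon\), which is contradictory. (The third conclusion, \(yw' = yw\) on \(J\), does survive in this degenerate case, but establishing that requires reopening the proof of Lemma \ref{FellowTraveler1}, not citing it as a black box.) The paper avoids all of this by peeling off the last symbol: write \(\str{w} = \str{ua}\); by minimality the proper prefix \(\str{u}\) genuinely has no return prefix for \(s\); apply Lemma \ref{FellowTraveler1} to \(\str{u}\) to get \(\str{u}'\) locally reduced at \(s\) with \(tu' = tu\) for all \(t \in J\); observe that \(su'a = s\) forces \(\str{u}' = \str{a}^{-1}\); and conclude \(tw = tu'a = ta^{-1}a = t\) on \(J\). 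Your argument can be repaired along exactly these lines, but as written it does not go through.

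A secondary point: to get \(\str{w}' = \varepsilon\) you cite the second part of Lemma \ref{Collision} ``applied to the point \(s\) (which has finite history),'' but that part of Lemma \ref{Collision} requires \(s\) to lie in no foot of \(A\) (i.e.\ to have \emph{trivial} history), which is not among your hypotheses --- a finite-history point of \(J(\str{w})\) may well lie in a foot. The correct tool for a finite-history point is Lemma \ref{TreeAction}: there is at most one word locally reduced at \(s\) sending \(s\) to \(s\), and \(\varepsilon\) is one such. Your alternative ``more direct'' argument via \(\dest(\str{w}')\) is also not correct as stated, since a point of \(\dest(b)\) need not be moved off of \(\dest(b)\) by \(b^{-1}\).
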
 

\begin{proof}
Let \(\str{w} = \str{ua}\) and let \(\str{u}'\) be locally reduced at \(s\) with \(su' = su\).
Since \(sw = su'a = s\), it must be that \(\str{u}' = \str{a}^{-1}\).
By Lemma \ref{FellowTraveler1},
\(t u' = tu\) whenever \(t \in J\).
Thus \(tw = tu' a = ta^{-1} a = t\) for all \(t \in J\).
\end{proof}

\begin{lemma} \label{FellowTraveler3}
If \(s,t \in I\) have finite histories and \(\eta(s) = \eta(t)\), then
any return word for \(s\) is a return word for \(t\).
If moreover \(s\) and \(t\) have trivial history and
\(\str{w} \ne \varepsilon\) is not a return word for \(s\),
then there is an \(a \in A^\pm\) such that \(\{sw,tw\} \subseteq \dest(a)\).
\end{lemma}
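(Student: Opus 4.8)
The plan is to prove the two assertions in sequence, reducing each to the fellow-traveling phenomenon already established in Lemmas \ref{FellowTraveler1} and \ref{FellowTraveler2}. For the first assertion, suppose \(\str{w}\) is a return word for \(s\); we want it to be a return word for \(t\). The obstacle is that \(\str{w}\) need not be \emph{locally reduced} at \(s\), so I cannot directly apply the tree action lemmas. The remedy is to pass to a minimal-length return \emph{prefix} \(\str{w}_0\) of \(\str{w}\) for \(s\) and then peel off repeatedly: since \(\str{w} = \str{w}_0 \str{w}_1 \cdots\) decomposes into consecutive return prefixes for \(s\) (each obtained by restarting the process at \(s = s w_0\)), it suffices to show each minimal return prefix \(\str{w}_i\) for \(s\) is also a return word for \(t\), and then compose. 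So I reduce to the case where \(\str{w}\) itself is a \emph{minimal} return word for \(s\). Here is the subtlety: Lemma \ref{FellowTraveler2} requires \(s \in J(\str{w})\), i.e. \(s \notin \src(\str{a})\) where \(\str{a}\) is the first symbol of \(\str{w}\). I need to rule out \(s \in \src(\str{a})\). But if \(s \in \src(\str{a}) = \dest(\str{a}^{-1})\), then \(s\) has nontrivial history — its history sequences all end in \(\str{a}^{-1}\); more precisely \(\str{a}^{-1} \in \eta(s)\) (as a length-one history, since \(s = s a^{-1} \cdot a\), locally reduced at \(sa^{-1}\), with \(sa^{-1} \notin \src(\str a ^{-1}) = \dest(\str a)\) — one checks this using that \(s\in\dest(a^{-1})\) means \(s \notin \src(a^{-1})\)). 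Wait — I should instead argue directly: whether or not \(s \in J(\str w)\), I claim \(\str w\) acts as the identity on \(J(\str w)\) when \(\str w\) is a minimal return word for \(s\) and \(s\in J(\str w)\); and when \(s\notin J(\str w)\), so \(s\in\src(\str a)\), I replace \(\str w = \str a \str v\) by the cyclically permuted word \(\str v \str a\), which is still a return word for \(sa \in J(\str v\str a)\) of the same length and minimality, and whose identity-on-\(J\) conclusion transfers back to \(\str w\) by conjugation. In all cases the conclusion is that a minimal return word for \(s\) is the identity on the interval set determined by its first letter.

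Now to connect this to \(t\): the hypothesis \(\eta(s) = \eta(t)\) forces \(s\) and \(t\) to lie in the same position of this combinatorial tree. Concretely, if \(\str{w}\) (minimal return for \(s\)) has first letter \(\str a\), then either \(s\) has trivial history, in which case \(\eta(s)=\eta(t)\) forces \(t\) to have trivial history with \(\tilde s = \tilde t\), so \(s,t\) lie in exactly the same supports and hence \(t \in J(\str w) \iff s \in J(\str w)\) and the support-containment hypotheses needed in Lemma \ref{FellowTraveler1} hold verbatim for \(t\); or \(\eta(s)=\eta(t)\) is nontrivial and then \(s,t\) agree on which feet of \(A\) they belong to by tracing the shared history sequence one step (the last letter of the history sequence determines the unique \(a\in A^\pm\) with \(s,t\in\dest(a)\), and Lemma \ref{BasicHistory} plus \ref{Collision} pin down everything relevant). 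In either case \(t\) satisfies the hypotheses of the identity-on-\(J\) argument for \(\str w\), so \(tw = t\). Composing over the decomposition \(\str w=\str w_0\str w_1\cdots\) of the original return word, which is valid for \(t\) because each \(\str w_i\) returns \(t\) to \(t\) as well, gives the first assertion.

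For the second assertion, assume additionally \(s,t\) have trivial history and \(\str w \neq \varepsilon\) is not a return word for \(s\). Since \(s\) has trivial history, \(s \notin \dest(a)\) for any \(a\in A^\pm\), hence \(s\notin\src(b)\) for any \(b\in A^\pm\) either (as \(\src(b)=\dest(b^{-1})\)), so \(s\notin\src(\str w)\) and likewise \(t\notin\src(\str w)\); thus \(s,t\in J(\str w)\) automatically. Take \(\str{w}'\) locally reduced at \(s\) with \(sw' = sw\); since \(\str w\) is not a return word for \(s\), \(sw \ne s\), so \(\str{w}' \ne \varepsilon\). Because trivial history gives \(\tilde s = \tilde t\) (from \(\eta(s)=\eta(t)=\{\tilde s\}\)), \(s\) and \(t\) lie in precisely the same supports, so \(\str{w}'\) has no return prefix for \(s\) — here I invoke Lemma \ref{NoReturn}-style reasoning, or more simply: any return prefix of the locally reduced word \(\str w'\) at \(s\) would, by Lemma \ref{FellowTraveler2}, already make \(\str w'\) act as identity near \(s\) contradicting \(sw'\neq s\); in fact since \(\str w'\) is locally reduced and \(s\in J(\str w')\), Lemma \ref{PingPong} gives \(sw' \in \dest(\str w')\), so \(sw'\neq s\) rules out a full return and the minimality machinery applies to forbid a proper return prefix. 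Then Lemma \ref{FellowTraveler1} applies with \(x := s\): since \(t \in J(\str w') = J(\str w)\), we get \(tw' = tw\) and \(\str{w}'\) is locally reduced at \(t\) as well. Finally, applying Lemma \ref{PingPong} to the nonempty locally reduced word \(\str{w}'\) at each of \(s, t\) (both of which lie in \(J(\str w')\), hence outside \(\src(\str w')\)) yields \(sw' \in \dest(\str w')\) and \(tw' \in \dest(\str w')\); taking \(a\) to be the last letter of \(\str w'\), so that \(\dest(\str w') = \dest(a)\), gives \(\{sw, tw\} = \{sw', tw'\} \subseteq \dest(a)\), as required.

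\textbf{Main obstacle.} The one genuinely delicate point is the bookkeeping that lets me assume, when proving the first assertion, that \(\str w\) is a minimal return word \emph{and} that \(s\) (hence \(t\)) lies in \(J(\str w)\); handling the \(s\in\src(\str a)\) case via cyclic rotation, and checking that \(\eta(s)=\eta(t)\) really does guarantee \(t\) shares every support/foot-membership fact about \(s\) that Lemmas \ref{FellowTraveler1}–\ref{FellowTraveler2} consume as hypotheses, is where the care is needed. The rest is a routine assembly of the earlier lemmas.
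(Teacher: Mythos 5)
Your overall route is the paper's: decompose a return word for \(s\) into minimal return prefixes and transfer each to \(t\) via Lemma \ref{FellowTraveler2}, then, for the second assertion, pass to a word with no return prefix for \(s\) and combine Lemma \ref{FellowTraveler1} with Lemma \ref{PingPong}. Two points need repair. First, in the second assertion you establish that the \emph{locally reduced} word \(\str{w}'\) has no return prefix for \(s\) and then ``apply Lemma \ref{FellowTraveler1} with \(x:=s\)'' to conclude \(tw'=tw\); but the hypothesis of Lemma \ref{FellowTraveler1} is that the \emph{original} word \(\str{w}\) has no return prefix for \(x\), and that is not implied by \(\str{w}\) failing to be a return word (\(\str{w}\) can perfectly well begin with a nontrivial minimal return prefix and then continue). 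The paper closes this by first excising every minimal return prefix of \(\str{w}\) for \(s\) --- each of which, by the first assertion, is also a return prefix for \(t\), so neither \(sw\) nor \(tw\) changes --- and only then applying Lemma \ref{FellowTraveler1} to the surviving tail. You built exactly this machinery in your first paragraph but never invoke it here; as written the step fails.

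Second, your treatment of the case \(s\notin J(\str{w}_0)\) in the first assertion is the one place you go beyond the paper (which applies Lemma \ref{FellowTraveler2} as if \(s\in J(\str{w}_0)\) always held), and you are right that something must be said; but the cyclic-rotation device is not yet a proof. Minimality does not obviously transfer from \(\str{a}\str{v}\) at \(s\) to \(\str{v}\str{a}\) at \(sa\) (a return prefix \(\str{v}'\) of \(\str{v}\str{a}\) for \(sa\) corresponds to the return word \(\str{a}\str{v}'\str{a}^{-1}\) for \(s\), which is not a prefix of \(\str{a}\str{v}\)); the rotated basepoint \(sa\) may again lie in the source of the new first letter, so the rotation must be iterated and termination argued; and you paraphrase \(s\in J(\str{w})\) as \(s\notin\src(\str{a})\), thereby omitting the case \(s\notin\supt(\str{a})\), where the minimal return prefix is the single letter \(\str{a}\), Lemma \ref{FellowTraveler2} is simply false for it, and one must instead argue directly that \(t\notin\supt(\str{a})\). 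A cleaner repair is an induction on the length of the common history: while \(\str{a}^{-1}\in\eta(s)=\eta(t)\), replace \((s,t,\str{w})\) by \((sa,ta,\str{a}^{-1}\str{w}\str{a})\), noting \(\eta(sa)=\eta(ta)\) by Lemma \ref{BasicHistory}; once both points have trivial history, \(\tilde s=\tilde t\), so each minimal return prefix either has its first letter fixing both points or has both points in its \(J\), and Lemma \ref{FellowTraveler2} applies without further fuss.
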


\begin{proof}
If there is a minimal return prefix of \(\str{w}\) for \(s\), then by Lemma \ref{FellowTraveler2},
it is also a return prefix for \(t\).
By iteratively removing minimal return prefixes for \(s\), 
we may assume \(\str{w}\) has no return prefixes for \(s\).
Observe that since \(\eta(s) = \eta(t)\), \(\{s,t\} \subseteq J(\str{w})\).
Lemma \ref{FellowTraveler1} now yields the desired conclusion.
\end{proof}

The following theorem shows that the restriction of the action of \(\gen{A}\) to \(M\gen{A}\) is faithful.

\begin{thm}\label{Faithful}
Suppose that \(A \subseteq \HomeoI\)
is a (possibly infinite) geometrically fast set of positive bump functions, equipped with a marking.
If \(g \in \gen{A}\) is not the identity, then
there is a \(y \in M \gen{A}\) such that \(y g \ne y\).
\end{thm}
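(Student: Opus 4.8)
The plan is to prove the contrapositive: the action of $\gen A$ on $M\gen A$ has trivial kernel. So I would assume $g \in \gen A$ fixes every point of $M\gen A$ and show $g = \id$.

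First I would make two reductions, setting up an induction on the number of bumps. A freely reduced word for $g$ uses only finitely many bumps, and such a finite subcollection $A_0 \subseteq A$ is again geometrically fast for the restricted marking, with $g \in \gen{A_0}$ and $M_{A_0}\gen{A_0} \subseteq M\gen A$ (every marker of $A_0$ is a marker of $A$, hence lies in $M\gen A$, and $M\gen A$ is $\gen A$-invariant); so it suffices to treat finite $A$. Next, bumps lying in distinct orbitals of $A$ have disjoint supports, hence commute, so $\gen A$ is the internal direct product of its one-orbital subgroups $\gen{A_O}$; writing $g = \prod_O g_O$ accordingly, some $g_O$ is nontrivial, and since $\supt(A_O)=O$ uses fewer bumps than $A$ (when there is more than one orbital), induction on the number of bumps gives $z \in M_{A_O}\gen{A_O} \subseteq M\gen A$ with $z g_O \ne z$; the remaining factors are supported off $O \ni z$, so $z g = z g_O \ne z$, a contradiction. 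Hence we may assume $\supt(A)$ is a single interval $O$. If $O$ is the support of an isolated bump $a$ (in particular if $|A|=1$), then $\gen A$ acts on $O$ as $\gen a \cong \Z$ and the marker of $a$ is moved by every nontrivial element, so assume $O$ is not of this form.

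For the remaining case fix a freely reduced word $\str w$ for $g$ and an initial marker $t \in M_A$ (which, for the canonical marking, has trivial history), recalling $th \in M\gen A$ for all $h \in \gen A$. The engine is Lemma \ref{PingPong}: it would suffice to produce a point $s$ of trivial history and a word $\str w^{\ast}$ representing $g$ or a conjugate of $g$ (permissible, since $M\gen A$ is invariant) such that $\str w^{\ast}$ is locally reduced at $s$; then $s \notin \src(\str w^{\ast})$ automatically, Lemma \ref{PingPong} forces $s w^{\ast} \in \dest(\str w^{\ast})$, and $s$ --- being of trivial history --- lies in no destination foot, so $s w^{\ast} \ne s$, contradicting that $g$ fixes $M\gen A$. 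The task is therefore to manufacture such a pair starting from $\str w$ and $t$. Since $g$ fixes $t$, the word $\str w$ is a return word for $t$; a minimal return prefix $\str p$ of $\str w$ for $t$ satisfies, by Lemma \ref{FellowTraveler2} (with $t \in J(\str p)$, of finite history), that $p$ is the identity on $J(\str w) = \supt(a)\setminus\src(a)$, $a$ being the first bump of $\str w$. Combining this with Lemma \ref{FellowTraveler1} (to spread an equality $xw'=xw$ across a whole foot), Lemma \ref{FellowTraveler3} (to move between points of equal history), and Lemma \ref{NoReturn} (to bound lengths and keep the process finite, using finiteness of $A$), one iterates --- peeling return prefixes and passing to neighbouring feet and markers --- until either one lands on a genuinely locally reduced word at some trivial-history point of $M\gen A$ that it displaces, contradiction, or else $g$ fixes every point of $M\gen A \cap O$. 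In the latter case the finite one-orbital structure finishes the job: either $M\gen A \cap O$ is dense in $O$ (as it is, e.g., when a transition chain spans $O$, since then $\gen A$ contains the orbit of the chain's markers) and continuity gives $g = \id$ on $O$; or there is a bump $a$ with $\supt(a)=O$, all other bumps of $A$ lying by disjointness of feet in a single fundamental domain of $a$, so $\gen A = N \rtimes \gen a$ with $N$ the direct sum of the $\gen a$-conjugates of $\gen{A\setminus\{a\}}$, and then either $g$ moves the marker of $a$ or $g \in N$ and induction on the number of bumps applied to a nontrivial summand finishes. In all cases $g = \id$, contradicting $g \ne 1$.

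The step I expect to be the main obstacle is the manufacture of the locally reduced word $\str w^{\ast}$ --- i.e., upgrading ``$g$ fixes $t$'' to a locally reduced word that actually displaces a point of $M\gen A$, with the bookkeeping kept finite and terminating. This is exactly where the present setting departs from the classical ping-pong template flagged in the introduction: because the generators have large fixed-point sets, a freely reduced word for $g$ need not be locally reduced anywhere one would like, so Lemma \ref{PingPong} cannot be applied to $\str w$ as given, and Lemmas \ref{NoReturn}--\ref{FellowTraveler3} are the instruments built to repair this. A secondary point to check carefully is that the initial markers do have trivial history --- so that their $\gen A$-orbits are pairwise disjoint and tree-like by Lemmas \ref{DisjointOrbits} and \ref{TreeAction} --- and that in the final step these orbits are either dense in $O$ or pinned down by the semidirect-product decomposition.
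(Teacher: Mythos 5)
Your setup is sound --- the reduction to finite $A$, the observation that a trivial-history point lies in no destination so that Lemma \ref{PingPong} displaces it under any word locally reduced there, and the diagnosis that the difficulty is manufacturing such a word --- but the proof has a genuine gap at its center, and the two devices you use to close it are both false in general. The ``iteration'' (peeling return prefixes and passing to neighbouring feet until one either displaces a trivial-history point or concludes that $g$ fixes $M\gen{A}\cap O$ pointwise) is not yet an argument; and, more seriously, neither fallback for the latter alternative works. First, $M\gen{A}\cap O$ need \emph{not} be dense in $O$: density is proved in the paper only for the special piecewise-linear representatives $X_D$ whose sources are expanded by a factor of at least $2$ (Claim \ref{OrbitDensity}), and the content of Theorem \ref{SemiConjThm} is precisely that a general geometrically fast $X$ is only \emph{semi}-conjugate to such a representative --- the collapsing map $\hat\theta$ can crush whole intervals to points, so a general orbital can contain nonempty open sets never visited by $M\gen{X}$, and a nontrivial $g$ could a priori move only points there. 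Second, the decomposition $\gen{A}=N\rtimes\gen{a}$ with $N$ a direct sum of conjugates of $\gen{A\setminus\{a\}}$ requires the supports $\supt(A\setminus\{a\})a^k$ to be pairwise disjoint for $k\in\Z$; disjointness of \emph{feet} does not give this, since supports are much larger than feet and a fundamental domain of $a$ in the middle of its orbital can be arbitrarily short, so the translated supports can overlap.

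The paper's proof runs in the opposite direction and is worth internalizing: it starts from an \emph{arbitrary} point $x$ with $xg\ne x$, chosen (by countability of orbits) outside the orbit of every marker and transition point, and then splits on the history $\eta(x)$. If $\eta(x)$ is trivial, the maximal marker-or-transition-point $y<x$ satisfies $\tilde y=\tilde x$ and has trivial history (Claim \ref{NotSplit}), so Lemma \ref{FellowTraveler3} gives $yw\ne y$, and $y$ is either itself in $M\gen{A}$ or is the limit of points $sa^k\in M\gen{A}$ to which the displacement passes by continuity; the finite-history case reduces to this by conjugating $\str{w}$ by the history; and if $\eta(x)$ is infinite one takes a history word $\str{u}$ longer than $\str{w}$, so that Lemma \ref{NoReturn} rules out return prefixes of $\str{uw}$ at the relevant initial marker $s$, and Lemmas \ref{FellowTraveler1} and \ref{TreeAction} then force $su\ne suw$ with $su\in M\gen{A}$. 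That infinite-history case is exactly the regime of points in the ``gaps'' that your density argument cannot reach.
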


\begin{remark}
If \(A\) is finite and equipped with the canonical marking, then
the cardinality of \(M\) is the sum of the number of maximal stretched transition chains in \(A\)
and the number of isolated elements of \(A\).
\end{remark}

\begin{proof}
First observe that if there is an \(x \in I\) such that \(xg \ne x\), then by continuity of \(g\),
there is an \(x \in I\) such that \(xg \ne x\) and \(x\) is not in the orbit of a transition point or
marker (orbits are countable and neighborhoods are uncountable).
Fix such an \(x\) and a word \(\str{w}\) representing \(g\) for the duration of the proof.
The proof of the theorem breaks into two cases, depending on whether \(x\) has
finite history.

We will first handle the case in which \(x\) has trivial history;
this will readily yield the more general case in which \(x\) has finite history.
Suppose that \(x \not \in \src(a)\) for all \(a \in A\).
Let \(y < x\) be maximal such that \(y\) is either a transition point or a marker.

\begin{claim} \label{NotSplit}
\(y\) has trivial history and
\(\tilde x = \tilde y\).
\end{claim}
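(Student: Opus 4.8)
The plan is to understand the local picture around $y$, the maximal transition point or marker below $x$. Since $x$ has trivial history, $x$ is not in the destination of any $a \in A^\pm$; equivalently $x \notin \dest(a)$ for all $a$, which since $\dest(a) \subseteq \supt(a)$ and feet are disjoint, tells us $x$ sits in each orbital it belongs to in the ``gap'' between the source foot and destination foot, i.e. $x \in \supt(a) \setminus (\src(a) \cup \dest(a))$ for every $a$ with $x \in \supt(a)$. First I would record that $\tilde x = \{a \in A : x \in \supt(a)\}$ by definition, and that since the feet of $A$ are pairwise disjoint and there are finitely many transition points nearby (or, in the infinite case, at least the feet are a disjoint interval family), between $y$ and $x$ there is no transition point and no foot endpoint of any bump, so $(y,x]$ (or perhaps $(y,x)$) meets no foot and no orbital boundary.

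Next I would argue $\tilde y = \tilde x$. If $a \in \tilde x$, then $x$ lies in the orbital $(p,q)$ of $a$ with $p < x < q$. Since $y < x$ is the largest transition point or marker below $x$ and $p$ is a transition point of $a$ (hence of $A$) with $p < x$, we get $p \le y < x < q$, so $y \in [p,q)$. If $y = p$ then $y$ would be a left transition point, but we also need $y \in \supt(a)$; here I must be slightly careful and use that $x$ is chosen not in the orbit of a transition point, together with maximality of $y$, to conclude $y$ is strictly inside $(p,q)$ — actually since $y$ is itself a transition point or marker and $p$ is a transition point with $p \le y$, and there are no transition points strictly between $y$ and $x$, while $x$ is interior to $(p,q)$, we get $y \in (p,q) = \supt(a)$, so $a \in \tilde y$. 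Conversely if $a \in \tilde y$ with orbital $(p,q)$, then $p \le y < q$; the right transition point $q$ satisfies $q > y$, and since there are no transition points in $(y,x]$ we get $q > x$, while $p \le y < x$, so $x \in (p,q) = \supt(a)$, giving $a \in \tilde x$. Hence $\tilde x = \tilde y$.

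Then I would verify $y$ has trivial history, i.e. $y \notin \dest(a)$ for every $a \in A^\pm$. Suppose $y \in \dest(a)$ for some $a \in A^\pm$; write $\dest(a) = [r,s)$ (or the appropriate half-open interval — one endpoint is the marker side). Since $y$ is a marker or transition point, and markers/transition points are the endpoints of feet and orbitals, $y$ being inside $\dest(a)$ forces $y = r$, the left endpoint of $\dest(a)$, which (for a positive bump) is $t a$ where $t$ is the marker; but that is not a transition point, and if it is the marker of some bump it is the marker of $a$ only if $ta = t$, impossible on the support. I would instead argue more robustly: if $y \in \dest(a)$, then because the feet of $A$ are disjoint and $x > y$ with nothing between them, $x$ would have to lie in $\dest(a)$ as well (since $\dest(a)$ is an interval and its right endpoint $s$ is a transition point or marker $> y$, hence $\ge x$ or $> x$), contradicting that $x$ has trivial history — unless $x = s$, but $s$ is a transition point or marker and $x$ avoids the orbit of every transition point and marker. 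This contradiction shows $y$ has trivial history.

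The main obstacle I anticipate is the bookkeeping at the endpoints: distinguishing the open source foot from the half-open destination foot, and handling the case where the candidate foot $\dest(a)$ has its closed endpoint exactly at $y$. The resolution in each case is the same: exploit that $x$ was deliberately chosen outside the orbit of every transition point and every marker (so $x$ is never an endpoint of a foot), and that $y$ is the \emph{maximal} transition point or marker below $x$ (so $(y,x]$ contains no transition point, no marker, hence no foot endpoint at all), which pins down the local structure and forces $x$ and $y$ into the same collection of orbitals with the same trivial-history status. Everything else is a direct check against the four axiomatic bullet-point properties of geometrically fast families of bumps listed in Section~\ref{FastBumpsSec}.
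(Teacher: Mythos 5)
Your overall strategy is the same as the paper's: exploit the maximality of \(y\) among transition points and markers, the fact that \(x\) lies in no foot, and the fact that \(x\) avoids the orbit of every transition point and marker, to show that nothing separates \(x\) from \(y\) dynamically. Your argument that \(a \in \tilde y\) implies \(a \in \tilde x\), and your argument that \(y\) lies in no foot (any foot containing \(y\) would have its right endpoint, a transition point or marker, beyond \(x\), and hence would contain \(x\)), are both correct and are essentially the paper's.

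There is, however, one step whose justification does not close as written: ruling out \(y = p\) when \(a \in \tilde x\) has orbital \((p,q)\), i.e., showing \(y\) cannot be the \emph{left} transition point of a bump whose support contains \(x\). The sentence you offer (``\(p\) is a transition point with \(p \le y\), there are no transition points strictly between \(y\) and \(x\), and \(x\) is interior to \((p,q)\)'') only re-derives \(p \le y\); nothing in it excludes equality. The missing ingredient is the \emph{marker} of \(a\): if \(y = p\), then \(\src(a) = (y, t_a)\) where \(t_a\) is the marker of \(a\), and since \(t_a > y\) is a marker, maximality of \(y\) together with \(x\) not being a marker forces \(t_a > x\), so \(x \in \src(a)\), contradicting the trivial history of \(x\). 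This is exactly how the paper dispatches the case (``\(y\) must be the right transition point of \(a\) since otherwise \dots \(x\) is in the left foot of \(a\)''). A second, smaller slip of the same kind: you claim \((y,x]\) contains ``no foot endpoint'' because it contains no transition point and no marker, but the left endpoint of a destination foot is the \emph{image} of a marker, which is neither; that case must also be handled by noting it would force \(x\) into that destination foot. Both repairs use tools you already have on the table, so the proof is recoverable, but as written these steps are gaps rather than checks.
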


\begin{proof}
First suppose that \(y\) is the marker of some \(a \in A\).
Notice that by our assumption of maximality of \(y\), the right transition point
of \(a\) is greater than \(y\).
In this case, both \(x\) and \(y\) are in the support of \(a\).
Furthermore, observe that \(y\) is not in the foot of any \(b \in A\).
To see this, notice that this would only be possible if \(y\) is in the right foot of some \(b\).
However since \(x\) is not in the right foot of \(b\),
the right transition point of \(b\) would then be less than \(x\),
which would contradict the maximal choice of \(y\).
Finally, if \(b \in A \setminus \{a\}\), the maximal choice of \(y\) implies that
\(x\) is in the support of \(b\) if and only if \(y\) is.

If \(y\) is a transition point of some \(a \in A\), then \(y\) must be the right transition point
of \(a\) since otherwise our maximality assumption on \(y\) would imply that
\(x\) is in the left foot of \(a\), contrary to our assumption that \(x\) has trivial history.
In this case neither \(x\) nor \(y\) are in the support of \(a\).
If \(b \in A \setminus \{a\}\), then our maximality assumption on \(y\) implies
that \(\{x,y\}\) is either contained in or disjoint from the support of \(b\).
To see that \(y\) has trivial history, observe that the only way a transition point
can be in a foot is for it to be the left endpoint of a right foot.
If \(y\) is the left endpoint of a right foot,
then our maximal choice of \(y\) would mean that \(x\) is also in this foot, which is contrary
to our assumption.
Thus \(y\) must have trivial history.
\end{proof}

By Claim \ref{NotSplit} and Lemma \ref{FellowTraveler3}, \(yw \ne y\).
If \(y\) is a marker, we are done.
If \(y\) is a transition point of some \(a \in A\), then as noted above it is the right transition point of \(a\).
If \(s\) is the marker of \(a\), then \(sa^k \to y\) and by continuity \(sa^k w \to yw\).
Thus for large enough \(k\), \(sa^k w \ne sa^k\).
Since \(sa^k \in M\gen{A}\), we are done in this case.

Now suppose that \(x\) has finite history and let \(\str{\tilde{t}u} \in \eta(x)\) with \(t \in I\) and \(tu = x\).
By definition of \(\eta(x)\), \(t\) has trivial history.
Since \(xw \ne x\), we have that \(tuw \ne tu\) and hence \(tuwu^{-1} \ne t\).
It follows from the previous case
that there is an \(s \in M\gen{A}\) such that \(s uwu^{-1} \ne s\).
We now have that \(y :=su\) is in \(M\gen{A}\) and satisfies \(yw \ne y\) as desired.

Finally, suppose that \(\eta(x)\) is infinite.
Let \(\str{u} \in \eta(x)\) be longer than \(\str{w}\), let \(s\) be the marker
for the initial symbol of \(\str{u}\), and set \(y := xu^{-1}\) and \(t := su\).
Since \(\str{u}\) is locally reduced at \(y\) by assumption,
Lemma \ref{threaded} implies that \(\str{u}\) is locally reduced at \(s\).
By Lemma \ref{NoReturn}, \(\str{uw}\) has no return prefix for \(s\).
Let \(\str{v}\) be locally reduced at \(s\) such that \(suw = sv\).
Applying Lemma \ref{FellowTraveler1} to \(\str{uw}\), \(s\) and \(\str{v}\), we
can conclude that \(J(\str{v}) = J(\str{uw})\), that
\(\str{v}\) is locally reduced at \(y\), and \(yuw = yv\).
Notice that since \(xw \ne x\), \(yuw = yv \ne yu\) and in particular \(\str{v} \ne \str{u}\).
By Lemma \ref{TreeAction}, we have that \(t = su \ne sv = suw = tw\).
This finishes the proof of Theorem \ref{Faithful}.
\end{proof}

We finish this section with two lemmas which concern multi-orbital homeomorphisms but 
which otherwise fit the spirit of this section.
They will be needed in Section \ref{ExcisionSec}.

\begin{lemma} \label{FindPrefix}
Suppose that \(X \subseteq \HomeoI\) is geometrically fast and equipped with a fixed marking.
Let \(A\) be the set of bumps used in \(X\) and \(s \in M\).
If \(\str{w}\) is an \(X\)-word and \(\str{u}\) is an \(A\)-word which is locally reduced at \(s\)
and satisfies \(su = sw\), then for every prefix \(\str{u}'\) of \(\str{u}\), there is a prefix \(\str{w}'\) of
\(\str{w}\) such that \(su' = sw'\). 
\end{lemma}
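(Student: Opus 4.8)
The plan is to argue by induction on the length $n$ of $\str{w}$ as an $X$-word. The structural fact used throughout is uniqueness of canonical paths out of $s$: since $s \in M$ lies in no foot of $A$ (as is observed just after Lemma~\ref{threaded}), the second assertion of Lemma~\ref{Collision} tells us that any two $A$-words which are locally reduced at $s$ and move $s$ to the same point are equal. Consequently, for each $g \in \gen{A}$ there is exactly one $A$-word locally reduced at $s$ that moves $s$ to $sg$ --- it exists by applying the first bullet of Lemma~\ref{LocRedBasics} to any $A$-word representing $g$.

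When $n = 0$, $\str{w} = \varepsilon$ and $sw = s$, so $\str{u} = \varepsilon$ by this uniqueness, and its only prefix $\varepsilon$ is matched by the prefix $\varepsilon$ of $\str{w}$. For the inductive step, write $\str{w} = \str{w_0}\str{h}$ with $\str{h} \in X^{\pm}$ and $|\str{w_0}| = n-1$, set $q := s w_0$, and let $\str{u_0}$ be the unique $A$-word locally reduced at $s$ with $s u_0 = q$. By the inductive hypothesis applied to $\str{w_0}$ and $\str{u_0}$, each prefix $\str{u_0}'$ of $\str{u_0}$ satisfies $s u_0' = s v$ for some prefix $\str{v}$ of $\str{w_0}$, and every prefix of $\str{w_0}$ is a prefix of $\str{w}$; so it remains to identify $\str{u}$ from $\str{u_0}$ and transfer this conclusion. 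The relevant geometric observation is that $h$ moves $q$ through at most one of its signed bumps: the supports of the signed bumps of $h$ are the distinct orbitals of $h$ and hence are pairwise disjoint, so $q$ belongs to at most one of them. If $q$ lies in no orbital of $h$, then $qh = q = sw$, so $\str{u} = \str{u_0}$ by uniqueness and we are done. Otherwise $q$ lies in the orbital of a signed bump $b$, which lies in $A^{\pm}$ since the positive member of $\{b, b^{-1}\}$ is used in $X$, and $qh = qb$.

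In the remaining case, $\str{b}$ is locally reduced at $q = s u_0$ (as $qb \ne q$), so the free reduction of $\str{u_0}\str{b}$ is locally reduced at $s$ by the second bullet of Lemma~\ref{LocRedBasics}, and it moves $s$ to $(sw_0)b = qb = qh = sw$; by uniqueness $\str{u}$ is this free reduction. If $\str{u_0}$ does not end in $\str{b}^{-1}$, then $\str{u} = \str{u_0}\str{b}$, so the prefixes of $\str{u}$ are the prefixes of $\str{u_0}$ together with $\str{u}$ itself; the former are handled by the inductive hypothesis as above, and $\str{u}$ itself moves $s$ to $sw$, matched by the full prefix $\str{w}$. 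If $\str{u_0}$ does end in $\str{b}^{-1}$, then $\str{u}$ is $\str{u_0}$ with its last letter removed, so every prefix of $\str{u}$ is a prefix of $\str{u_0}$ and the inductive hypothesis applies directly. This completes the induction. The only step needing care is the split according to whether $\str{u_0}\str{b}$ is already freely reduced; everything else is immediate from Lemmas~\ref{LocRedBasics} and~\ref{Collision} together with the disjointness of the orbitals of a single homeomorphism.
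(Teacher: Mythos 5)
Your proof is correct and takes essentially the same approach as the paper's: the paper tracks, for each prefix \(\str{w}_i\) of \(\str{w}\), the unique word \(\str{u}_i\) locally reduced at \(s\) with \(su_i = sw_i\) and observes that consecutive \(\str{u}_i\)'s differ by at most one symbol inserted or deleted at the end, which is exactly the content of your inductive step. You merely make explicit the justifications (disjointness of the orbitals of a single generator, uniqueness via Lemma \ref{Collision}, and the free-reduction bullet of Lemma \ref{LocRedBasics}) that the paper leaves to the reader.
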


\begin{proof}
Let \(\str{w}_i\) be the prefix of \(\str{w}\) of length \(i\) for \(i \leq |\str{w}|\) and let
\(\str{u}_i\) be the unique word which is locally reduced at \(s\) such that
\(su_i = sw_i\).
Notice that if \(\str{u}_{i+1} \ne \str{u}_i\), then \(\str{u}_{i+1}\)
is obtained by inserting or deleting a single symbol at/from the end of \(\str{u}_i\).
It follows that all prefixes of \(\str{u}\) occur among the \(\str{u}_i\)'s.
\end{proof}

If \(X \subseteq \HomeoI\), then an element \(s\) of \(I\) is defined to have finite history with respect to \(X\)
if it has finite history with respect to the set of bumps used in \(X\).
The meaning of \emph{return word} is unchanged in the context of \(X\)-words.

\begin{lemma} \label{X_to_A}
Suppose that \(X \subseteq \HomeoI\) is geometrically fast, equipped with a fixed marking, and that \(A\)
denotes the set of bumps used in \(X\).
Let \(\str{w} \ne \varepsilon\) be an \(X\)-word and \(\str{a} \in A^\pm\) be a signed bump of
the first symbol of \(\str{w}\).
If \(s \in J: = J(\str{a})\) and \(\str{w}\) has no proper return prefix for \(s\),
then there is an \(A\)-word \(\str{v}\) which begins with \(\str{a}\) and is such that
\(v\) and \(w\) coincide on \(J\).
\end{lemma}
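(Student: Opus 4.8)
The plan is to build $\str{v}$ by following the trajectory of the interval $J=J(\str{a})=\supt(a)\setminus\src(a)$ through the letters of $\str{w}$, appending one bump at a time. Write $\str{w}=\str{f}_1\str{f}_2\cdots\str{f}_n$ with $\str{f}_i\in X^\pm$, let $\str{w}_i$ be the length-$i$ prefix of $\str{w}$ (so $\str{w}_0=\varepsilon$ and $\str{w}_n=\str{w}$), and let $w_i$ be its evaluation. I will construct freely reduced $A$-words $\str{v}_0=\varepsilon,\str{v}_1,\dots,\str{v}_n$ obeying the invariant that $\str{v}_i$ is locally reduced on $J$, that $v_i$ and $w_i$ coincide on $J$, and that $\str{v}_i$ begins with $\str{a}$ whenever $1\le i\le n-1$. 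Since $\str{a}$ is a signed bump of $\str{f}_1$, its support is an orbital of $f_1$ on which $f_1$ agrees with $a$; as $J\subseteq\supt(a)$, we may take $\str{v}_1:=\str{a}$, and the invariant holds at $i=1$.

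For the inductive step, fix $i$ with $1\le i<n$, so $\str{v}_i\ne\varepsilon$ and begins with $\str{a}$. Since $\src(\str{v}_i)=\src(\str{a})$ is disjoint from $J$, Lemma~\ref{PingPong} gives $Jv_i\subseteq\dest(\str{v}_i)$, a foot of $A$. Because $A$ is geometrically fast, every $d\in A^\pm$ satisfies $\dest(\str{v}_i)\subseteq\supt(d)$ or $\dest(\str{v}_i)\cap\supt(d)=\emptyset$; together with the disjointness of the orbitals of $f_{i+1}$ this yields exactly one of two cases. If $\dest(\str{v}_i)$ is disjoint from $\supt(f_{i+1})$, then $f_{i+1}$ fixes $Jv_i$ pointwise and we set $\str{v}_{i+1}:=\str{v}_i$. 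Otherwise $\dest(\str{v}_i)$ lies in a single orbital of $f_{i+1}$, so there is a unique signed bump $\str{c}\in A^\pm$ of $\str{f}_{i+1}$ with $\dest(\str{v}_i)\subseteq\supt(c)$; then $f_{i+1}$ agrees with $c$ on $Jv_i$, and we let $\str{v}_{i+1}$ be the free reduction of $\str{v}_i\str{c}$. In the first case the invariant is clear. In the second case, if $\str{v}_i\str{c}$ is already reduced, then $\str{v}_i$ (being freely reduced and locally reduced on $J=J(\str{v}_i)$) satisfies the threading condition of Lemma~\ref{threaded}, and appending $\str{c}$ preserves it since $\dest(\str{v}_i)\subseteq\supt(c)$; hence $\str{v}_{i+1}=\str{v}_i\str{c}$ is locally reduced on $J$ and begins with $\str{a}$. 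If instead $\str{c}$ cancels the last letter of $\str{v}_i$, then $\str{v}_{i+1}$ is a proper prefix of $\str{v}_i$, so it remains freely reduced and locally reduced on $J$. In all cases $v_{i+1}$ and $w_{i+1}$ coincide on $J$, using $v_i\restriction J=w_i\restriction J$ and the fact that $f_{i+1}$ restricts to $c$ (or to the identity) on $Jv_i$. Finally $\str{v}_{i+1}$ still begins with $\str{a}$ unless a cancellation has reduced $\str{v}_i=\str{a}$ to $\str{v}_{i+1}=\varepsilon$; in that case $sv_{i+1}=s$, so $\str{w}_{i+1}$ is a nonempty return prefix of $\str{w}$ for $s$, which the hypothesis allows only when $i+1=n$. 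This maintains the invariant up through $\str{v}_n$.

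It remains to read off $\str{v}$. If $\str{v}_n\ne\varepsilon$, then $\str{v}_n$ begins with $\str{a}$ and $v_n$ agrees with $w=w_n$ on $J$, so $\str{v}:=\str{v}_n$ works. If $\str{v}_n=\varepsilon$, then $w\restriction J=v_n\restriction J=\id$, and $\str{v}:=\str{a}\str{a}^{-1}$ begins with $\str{a}$, evaluates to the identity, and so coincides with $w$ on $J$. The main obstacle — and the only place the hypotheses are genuinely used — is the bookkeeping around free reduction: one must check that the leading $\str{a}$ of the $\str{v}_i$ is never cancelled before $\str{w}$ has been processed (this is precisely what ``$\str{w}$ has no proper return prefix for $s$'' provides, the degenerate exception at the last letter being harmless because $w$ is then the identity on $J$), and that local reducedness on $J$ is preserved at each step, so that Lemma~\ref{PingPong} confines $Jv_i$ to a single foot and hence to a single orbital of $\str{f}_{i+1}$ — which is what makes ``the bump of $\str{f}_{i+1}$ acting on $Jv_i$'' well defined even when $\str{f}_{i+1}$ uses infinitely many bumps.
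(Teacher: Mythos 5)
Your proof is correct, and it follows the same inductive skeleton as the paper's: process the $X$-word one letter at a time, use Lemma~\ref{PingPong} to confine the image of $J$ to a single foot, and thereby identify the unique signed bump of the next letter (if any) that acts on that foot. The execution differs in how the $A$-word is maintained. The paper builds an \emph{unreduced} word $\str{v}'$ and, when it needs to know which foot contains $Ju$, transfers the no-return-prefix hypothesis from $\str{u}$ to $\str{v}'$ via Lemma~\ref{FindPrefix} and then passes to a locally reduced subword using Lemma~\ref{FellowTraveler1}; the leading symbol $\str{a}$ is never in danger because nothing is ever deleted. You instead keep $\str{v}_i$ freely reduced (indeed locally reduced on $J$, via Lemma~\ref{threaded}) at every stage, which makes Lemma~\ref{PingPong} directly applicable but forces you to rule out premature cancellation of the leading $\str{a}$; you correctly observe that such a cancellation would produce a proper return prefix of $\str{w}$ for $s$, and you handle the one permitted degenerate case ($\str{v}_n=\varepsilon$, where $w\restriction J$ is the identity) by outputting $\str{a}\str{a}^{-1}$. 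Your route is a bit more self-contained --- it bypasses Lemmas~\ref{FindPrefix} and~\ref{FellowTraveler1} entirely --- and yields the slightly stronger byproduct that the witness can be taken locally reduced on $J$ outside the degenerate case; the paper's version externalizes the bookkeeping into lemmas it has already proved and reuses elsewhere.
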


\begin{proof}
The proof is by induction on the length of \(\str{w}\).
Observe that the lemma is trivially true if \(\str{w}\) has length at most \(1\).
Therefore suppose that \(\str{w} = \str{u g}\) with \(g \in X^\pm\) and \(\str{u} \ne \varepsilon\).
Let \(\str{v}'\) be an \(A\)-word which begins with \(\str{a}\) and is such that \(v'\) and \(u\) agree on \(J\).
Since \(\str{u}\) has no return prefix, Lemma \ref{FindPrefix} implies that \(\str{v}'\) has no return prefix.
Let \(\str{v}''\) be a subword of \(\str{v}'\) which is locally reduced at \(s\) and satisfies
\(sv'' = sv'\).
By Lemma \ref{FellowTraveler1}, \(\str{v}''\) is locally reduced on \(J\) and
\(J(\str{v}'') = J(\str{v}') = J\).
In particular, \(J u \subseteq \dest(\str{v}'')\).
If the support of \(g\) is disjoint from \(\dest(\str{v}'')\), set \(\str{v}:=\str{v}'\).
Otherwise,
let \(b \in A^\pm\) be the signed bump of \(g\) such that \(\dest(\str{v}'') \subseteq \supt(b)\)
and set \(\str{v}:=\str{v}' \str{b}\).
Observe that \(\str{v}\) satisfies the conclusion of the lemma.
\end{proof}

\section[The isomorphism theorem]{The isomorphism theorem for geometrically fast generating sets}

\label{CombToIsoSec}

At this point we have developed all of the tools needed to prove Theorem \ref{CombToIso},
whose statement we now recall.

\begin{customthm}{\ref{CombToIso}}
If two geometrically fast sets \(X , Y \subseteq \HomeoI\) have only finitely many transition points
and have isomorphic dynamical diagrams, then the induced
bijection between \(X\) and \(Y\) extends to an isomorphism of
\(\gen{X}\) and \(\gen{Y}\) (i.e. \(\gen{X}\) is \emph{marked isomorphic} to \(\gen{Y}\)).
Moreover, there is an order preserving bijection
\(\theta :  M \gen{X} \to M \gen{Y}\) such that
\(f \mapsto f^\theta\) induces the isomorphism 
\(\gen{X} \cong \gen{Y}\).
\end{customthm}

Observe that it is sufficient to prove this theorem in the special case when
\(X\) and \(Y\) are finite geometrically fast collections of positive bumps:
if \(A\) and \(B\) are the bumps used in \(X\) and \(Y\) respectively,
then the dynamical diagrams of \(A\) and \(B\) are isomorphic and
the isomorphism of \(\gen{A}\) and \(\gen{B}\) restricts to 
a marked isomorphism of \(\gen{X}\) and \(\gen{Y}\).

Fix, for the moment, a finite geometrically fast set of positive bumps \(X\).
As we have noted, it is a trivial matter, given a word \(\str{w}\) and a \(t \in M\),
to find a subword \(\str{w}'\) which is locally reduced at \(t\) and satisfies \(t w = tw'\).
Theorem \ref{CombToIso} will fall out of an analysis of a question of
independent interest:
how does one determine \(\str{w}'\) from \(\str{w}\)
and \(t\) using only the dynamical diagram of \(X\)?
Toward this end, we define \(\str{\tilde{t}w}\) to be a \emph{local word} if \(t \in M\) and \(\str{w}\) is a word.
(Notice that \(t \mapsto \tilde t\) is injective on \(M\);
the reason for working with \(\tilde t\) is in anticipation
of a more general definition in Section \ref{AbstractPingPongSec}.)
A local word \(\str{\tilde{t}w}\) is \emph{freely reduced} if \(\str{w}\) is.
It will be convenient to adopt the convention that \(\dest(\str{\tilde{t}}) = \{t\}\) if
\(t \in M\).
Define \(\Lambda = \Lambda_X\) to be the set of all freely reduced local words \(\str{\tilde{t}w}\)
such that if \(\str{ab}\) are consecutive symbols in \(\str{\tilde{t}w}\), then
the destination of \(a\) is between \(\src(b)\) and \(\dest(b)\) in the diagram's ordering.
Notice that the assertion that \(\str{\tilde{t}w}\) is in \(\Lambda\) can be formulated as an
assertion about \(\str{w}\), the element of \(X\) which has \(t\) as a marker and
the dynamical diagram of \(X\).

Every local word \(\str{\tilde{t}w}\)
can be converted into an element of \(\Lambda\) by iteratively
removing symbols by the following procedure:
if \(\str{ab}\) is the first consecutive pair in \(\str{\tilde{t}w}\) which witnesses that it
is not in \(\Lambda\), then:
\begin{itemize}

\item if \(b = a^{-1}\) then delete the pair \(\str{ab}\);

\item if \(b \ne a^{-1}\) then delete \(\str{b}\).

\end{itemize}
Observe that since the first symbol of a local word is not removed by this procedure,
the result is still a local word.
The \emph{local reduction} of a local word \(\str{\tilde{t}w}\) is the result of applying
this procedure to \(\str{\tilde{t}w}\) until it terminates at an element of \(\Lambda\).
The following lemma admits a routine proof by induction, which we omit.

\begin{lemma} \label{LambdaLem}
Suppose that \(X\) is a geometrically fast set of positive bumps.
If \(t \in M\) and \(\str{w}\) is a word,
then \(\str{w}\) is locally reduced at \(t\) if and only if \(\str{\tilde{t}w}\)
is in \(\Lambda\).
Moreover, if \(\str{w}'\) is such that \(\str{\tilde{t}w}'\)
is the local reduction of \(\str{\tilde{t}w}\),
then \(tw'\) and \(tw\) coincide.
\end{lemma}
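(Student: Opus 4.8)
The plan is to read both assertions off the characterization of local reducedness established in Section~\ref{PingPongSec}, after unwinding the definition of $\Lambda$. Throughout we use the observation recorded just before Lemma~\ref{Collision} that a point $t \in M$ lies in no foot of any element of $A^\pm$; in particular $t \notin \src(a)$ for every $a$, so $t \in J(\str{w})$ as soon as the first symbol $\str{a}$ of $\str{w}$ has $t \in \supt(a)$. We also use that the convention $\dest(\str{\tilde{t}}) = \{t\}$ puts the leading pair $\str{\tilde{t}a}$ of a local word on the same footing as an interior consecutive pair $\str{ab}$: the $\Lambda$-condition on $\str{\tilde{t}a}$ asks that $\{t\}$ lie between $\src(a)$ and $\dest(a)$, and this holds exactly when $t \in \supt(a)$.

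For the equivalence, if $\str{w} = \varepsilon$ then $\varepsilon$ is vacuously locally reduced at $t$ and $\str{\tilde{t}} \in \Lambda$, while if $t \notin \supt(a)$ for the first symbol $\str{a}$ of $\str{w}$ then $ta = t$, so $\str{w}$ is not locally reduced at $t$ and the leading pair $\str{\tilde{t}a}$ violates the $\Lambda$-condition. In the remaining case $t \in J(\str{w})$, and Lemma~\ref{threaded} says that $\str{w}$ is locally reduced at $t$ if and only if $\str{w}$ is freely reduced and $\dest(a) \subseteq \supt(b)$ for every consecutive pair $\str{ab}$ of $\str{w}$. It then suffices to observe that for a freely reduced pair of signed bumps $\str{ab}$, the condition ``$\dest(a)$ lies between $\src(b)$ and $\dest(b)$ in the diagram ordering'' is equivalent to ``$\dest(a) \subseteq \supt(b)$'': by the axioms of Section~\ref{FastBumpsSec} either $\dest(a) \subseteq \supt(b)$ or $\dest(a) \cap \supt(b) = \emptyset$, the feet of $A$ are pairwise disjoint so $\dest(a) \ne \src(b)$ (as $\str{ab}$ is freely reduced), and the vertex order of $D_X$ is the order inherited from $I$. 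Hence $\str{\tilde{t}w} \in \Lambda$ if and only if $\str{w}$ is freely reduced, $t \in \supt(a)$, and $\dest(c) \subseteq \supt(d)$ for all consecutive $\str{cd}$ of $\str{w}$, i.e.\ if and only if $\str{w}$ is locally reduced at $t$.

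For the second assertion I would induct on the number of steps in the local reduction, showing that each step preserves the point obtained by applying the current local word, with its leading symbol dropped, to $t$; the procedure terminates since each step shortens the word, and the leading symbol is never removed, so the output is a local word $\str{\tilde{t}w}'$. Suppose the current word $\str{\tilde{t}v}$ is not in $\Lambda$, let $\str{ab}$ be the first consecutive pair witnessing this, and let $\str{\tilde{t}pa}$ be the prefix of $\str{\tilde{t}v}$ ending at $a$ (so $\str{pa} = \varepsilon$ if $\str{ab}$ is the leading pair). By the choice of $\str{ab}$, the prefix $\str{\tilde{t}pa}$ is freely reduced and all of its consecutive pairs satisfy the $\Lambda$-condition, so $\str{\tilde{t}pa} \in \Lambda$; by the equivalence just proved $\str{pa}$ is locally reduced at $t$, and since $t$ lies in no foot, Lemma~\ref{PingPong} gives $s := t(pa) \in \dest(a)$, reading $\dest(\str{\tilde{t}}) = \{t\}$ when $\str{pa} = \varepsilon$. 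If $b = a^{-1}$ the step deletes $\str{ab}$, which represents the identity, so the value at $t$ is unchanged. Otherwise the step deletes $\str{b}$; then $\dest(a)$ is not between $\src(b)$ and $\dest(b)$, so $\dest(a) \not\subseteq \supt(b)$ and hence $\dest(a) \cap \supt(b) = \emptyset$, giving $sb = s$, so again the value at $t$ is unchanged. Iterating shows $tw' = tw$.

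The step requiring the most care is the second assertion, and within it the bookkeeping around the leading symbol: one must confirm that the convention $\dest(\str{\tilde{t}}) = \{t\}$ really does make the leading pair $\str{\tilde{t}a}$ behave like an interior pair $\str{ab}$, so that the base case $\str{pa} = \varepsilon$, $s = t$ of the induction is not exceptional. Everything else amounts to matching the definition of $\Lambda$ against Lemmas~\ref{threaded} and~\ref{PingPong}.
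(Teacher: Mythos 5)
The paper omits this proof as ``a routine proof by induction,'' so there is no written argument to compare against; your proposal is a correct and complete filling-in of exactly what was intended, getting the equivalence from Lemma~\ref{threaded} (plus the fact that initial markers lie in no feet) and the invariance of the evaluation under each reduction step from Lemma~\ref{PingPong}. The only point worth making explicit is that your identification of \emph{between $\src(b)$ and $\dest(b)$} with $\dest(a)\subseteq\supt(b)$ tacitly reads \emph{between} inclusively at the $\dest(b)$ end --- needed when $a=b$, e.g.\ for the word $\str{aa}$ at the marker of $a$ --- which is the reading the truth of the lemma forces in any case.
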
 

Next, order \(A^\pm\) so that if \(a,b \in A^\pm\), then \(a\) is less than \(b\) if every element
of \(\dest(a)\) is less than every element of \(\dest(b)\).
Order \(\Lambda\) with the \emph{reverse lexicographic order}:
if \(\str{uw}\) and \(\str{vw}\) are in \(\Lambda\) and the last symbol of
\(\str{u}\) is less than that of \(\str{v}\), then we declare
\(\str{uw}\) less than \(\str{vw}\).
Recall that the \emph{evaluation map} on \(\Lambda\) is the function
which assigns the value \(tu\) to each string \(\str{\tilde{t}u} \in \Lambda\).
(This is well defined since \(t \mapsto \str{\tilde{t}}\) is injective on \(M\).)
This order is chosen so that the following lemma is true.

\begin{lemma} \label{RevLex}
The evaluation map defined on \(\Lambda\) is order preserving.
\end{lemma}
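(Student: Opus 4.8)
The plan is to prove that the evaluation map $\mathtt{\tilde t u} \mapsto tu$ on $\Lambda$ is order preserving by induction on the common suffix structure, reducing everything to a single key geometric fact about the dynamical diagram ordering. The essential point is this: if $\mathtt{uw}$ and $\mathtt{vw}$ are in $\Lambda$ with the same suffix $\mathtt{w}$, and the last symbol $a$ of $\mathtt{u}$ is less than the last symbol $b$ of $\mathtt{v}$ in the ordering on $A^\pm$ (i.e.\ every element of $\dest(a)$ is less than every element of $\dest(b)$), then I must show the evaluated points satisfy the corresponding inequality. So the first step is to reduce to the case $\mathtt{w} = \varepsilon$: given $\mathtt{ua w}$ and $\mathtt{vb w}$ in $\Lambda$ with $a < b$, note that by Lemma~\ref{threaded} (or directly from the definition of $\Lambda$) each initial segment is locally reduced at the relevant marker, so evaluating $\mathtt{w}$ is just applying a fixed orientation-preserving homeomorphism $w \in \gen{A}$ to the two points $t(ua)$ and $s(vb)$; since homeomorphisms in $\HomeoI$ preserve order, it suffices to show $t(ua) < s(vb)$.

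Next I would handle the base case $\mathtt{w} = \varepsilon$, i.e.\ show that if $\mathtt{\tilde t u a}$ and $\mathtt{\tilde s v b}$ lie in $\Lambda$ and $a < b$ in $A^\pm$, then $t(ua) < s(vb)$. Here I apply Lemma~\ref{PingPong}: since $\mathtt{\tilde t u a}$ is a local word in $\Lambda$, the word $\mathtt{ua}$ is locally reduced at $t$ (Lemma~\ref{LambdaLem}), $t \notin \src(\mathtt{ua})$ (because $t \in M$ is a marker, hence not in any foot), so $t(ua) \in \dest(\mathtt{ua}) = \dest(a)$. Likewise $s(vb) \in \dest(b)$. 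Now $a < b$ means every point of $\dest(a)$ is below every point of $\dest(b)$, so $t(ua) < s(vb)$, as desired. This also covers the degenerate cases where $\mathtt{u}$ or $\mathtt{v}$ is empty: then $\dest(\mathtt{\tilde t}) = \{t\}$ by convention and $t$ is compared against a destination interval, but the same reasoning applies once one checks the ordering convention on $A^\pm$ is compatible with treating $\mathtt{\tilde t}$ as having ``destination'' $\{t\}$ — here I would observe that the definition of $\Lambda$ forces $t = \dest(\mathtt{\tilde t})$ to lie between $\src$ and $\dest$ of the next symbol, which pins down the relative order correctly.

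The main obstacle I anticipate is bookkeeping the comparison when the two strings do \emph{not} share a maximal common suffix of the form assumed — that is, making sure the reverse-lexicographic order is actually a total order on $\Lambda$ and that ``the last symbol of $\mathtt{u}$ is less than that of $\mathtt{v}$'' is a well-posed trichotomy. One has to check that two distinct elements of $\Lambda$ with the same evaluation cannot exist unless they are equal as strings (which follows from Lemma~\ref{Collision}, since markers are not in any feet), so the evaluation map is injective and order-preservation is equivalent to order-\emph{reflection}; and one must verify that given any two elements of $\Lambda$, stripping off their longest common suffix leaves either $\varepsilon$ on one side (handled by the degenerate case) or two strings ending in symbols that are genuinely comparable in the $A^\pm$ order — which holds because $\dest$ is injective on $A^\pm$ and disjoint destinations are linearly ordered in the diagram. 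Once this combinatorial setup is in place, the geometric content is entirely carried by Lemma~\ref{PingPong} plus monotonicity of homeomorphisms, so the proof should be short.
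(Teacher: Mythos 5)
Your proposal is correct and follows essentially the same route as the paper: reduce to comparing the prefixes before the common suffix, use Lemma~\ref{PingPong} to place each evaluation in the destination of its final symbol, invoke the ordering on \(A^\pm\) by destinations, and finish by monotonicity of \(w\). The extra care you take with the degenerate case \(\str{u}=\str{\tilde t}\) (via the convention \(\dest(\str{\tilde t})=\{t\}\)) and with well-posedness of the reverse lexicographic order is sound but is left implicit in the paper's shorter argument.
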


\begin{proof}
Suppose that \(\str{uw}\) and \(\str{vw}\) are in \(\Lambda\) and the last symbol of 
\(\str{u}\) is less than the last symbol of \(\str{v}\).
Observe that by Lemma \ref{PingPong}, the evaluation of \(\str{u}\)
is an element of its destination.
Thus if the destination of \(\str{u}\) is less than the destination of \(\str{v}\),
then this is true of their evaluations as well.
Since \(t \mapsto tw\) is order preserving, we are done.
\end{proof}

Now we are ready to prove Theorem \ref{CombToIso}.
\begin{proof}[Proof of Theorem \ref{CombToIso}]
As noted above, we may assume that \(X\) and \(Y\) are geometrically fast families of
positive bump functions with isomorphic dynamical diagrams.
By Theorem \ref{Faithful}, we know that \(\gen{X} \restriction \big(M\gen{X}\big)\)
is marked isomorphic to \(\gen{X}\); similarly \(\gen{Y} \restriction \big(M\gen{Y}\big)\) is marked
isomorphic to \(\gen{Y}\).
It therefore suffices to define an order preserving bijection \(\theta: M\gen{X} \to M\gen{Y}\)
such that \(\theta(sa) = \theta(s) \tau (a)\), where \(s \in M\gen{X}\) and \(a \in X\) and
where \(\tau : X \to Y\) is the bijection induced by the isomorphism of the dynamical diagrams of \(X\) and \(Y\).

Define \(\mu : M_X \to M_Y\) by \(\mu(s) = t\) if \(s\) is the marker for \(a \in X\) and \(t\) is the marker
for \(\tau(a) \in Y\).
Let \(\lambda\) denote the translation of local \(X\)-words into local \(Y\)-words
induced by \(\mu\) and \(\tau\).
Define \(\theta : M\gen{X} \to M\gen{Y}\) so that \(\theta(tu)\) is the evaluation of \(\lambda (\str{tu})\)
for \(\str{\tilde{t}u} \in \Lambda_X\).
This is well defined by Lemmas \ref{DisjointOrbits}, \ref{TreeAction}, and \ref{LambdaLem}.
By Lemma \ref{RevLex} and the fact that
\(\lambda\) preserves the reverse lexicographic order,
\(\theta\) is order preserving.

Now suppose that \(s \in M\gen{X}\) and \(a \in X^\pm\).
Fix \(\str{\tilde{t}u} \in \Lambda_X\) such that \(s = tu\) and let
\(\str{\tilde{t}v} \in \Lambda_X\) be the local reduction of \(\str{\tilde{t}ua}\).
Observe that on one hand \(\theta(sa) = \theta (tv)\) is the evaluation of \(\lambda(\str{\tilde{t}v})\).
On the other hand \(\theta(s)\tau(a)\) is the evaluation of \(\lambda(\str{\tilde{t}ua})\).
Since \(\lambda\) is induced by an isomorphism of dynamical diagrams, it satisfies
that \(\str{w}'\) is the local reduction of \(\str{w}\) if and only if \(\lambda(\str{w}')\) is the local
reduction of \(\lambda(\str{w})\).
In particular, \(\lambda(\str{\tilde{t}v})\) is the local reduction of \(\lambda(\str{\tilde{t}ua})\).
By Lemma \ref{LambdaLem}, these local \(Y\)-words have the same evaluation and
coincide with \(\theta(sa)\) and \(\theta(s) \tau(a)\), respectively.
This completes the proof of Theorem \ref{CombToIso}.
\end{proof}

As we noted in the introduction Theorem \ref{CombToIso} has two immediate consequences.
First, geometrically fast sets \(X = \{f_i \mid i < n\} \subseteq \HomeoI\) with finitely
many transition points are \emph{algebraically fast}:
if \(1 \leq k_i\) for each \(i < n\), then
\(\gen{f_i \mid i < n}\) is marked isomorphic to \(\gen{f^{k_i}_i \mid i < n}\).
The reason for this is that the dynamical diagrams associated to
\(\{f_i \mid i < n\}\) and \(\{f_i^{k_i} \mid i < n\}\) are isomorphic.
Second, since the dynamical diagram of any geometrically fast set with finitely many transition points
can be realized by a geometrically fast subset of \(F\) (see, e.g., \cite[Lemma 4.2]{CFP}),
every group admitting a finite geometrically fast generating set can be embedded into \(F\).
The notion of \emph{history} in the previous section is revisited in Section \ref{AbstractPingPongSec}
where it is used to prove a relative  of Theorem \ref{CombToIso}.

More evidence of the restrictive nature of geometrically fast generating sets
can be found in \cite{kim+koberda+lodha} where groups generated by stretched
transition chains \(C\) as defined in Section \ref{FastBumpsSec} are
considered under the weaker assumption
 that consecutive pairs of elements in \(C\) are geometrically fast.
Groups generated by such a \(C\) with \(n\) elements are called
{\itshape \(n\)-chain groups}.
It is proven in
\cite{kim+koberda+lodha} that every \(n\)-generated subgroup of
\(\HomeoI\) is a subgroup of an \((n+2)\)-chain group.
Another result of \cite{kim+koberda+lodha} is that for each
\(n\ge3\), there are uncountably many isomorphism types of
\(n\)-chain groups. 
By contrast, Theorem \ref{CombToIso} (with Corollary \ref{Excision} below) implies that
the number of isomorphism types of groups with finite, geometrically fast generating sets is
countable because the number of isomorphism types of dynamical
diagrams is countable.

\section[semi-conjugacy]{Minimal representations of
geometrically fast groups and topological semi-conjugacy}

\label{SemiConjSec}

Theorem \ref{CombToIso} partitions the subgroups of \(\HomeoI\)
generated by geometrically fast sets with finitely many transition points: two such sets are
considered equivalent if their dynamical diagrams are isomorphic.
In this section we show that each class contains a (nonunique)
representative \(Y\) so that for each \(X\) in the class there is a
marked isomorphism \(\phi : \gen{X} \to \gen{Y}\) which is
induced by a semi-conjugacy on \(I\).
Specifically, the bijection \(\theta : M\gen{X} \to M\gen{Y}\) of Theorem \ref{CombToIso}
extends to a continuous order preserving surjection \(\hat{\theta}:I \to I\)
so that for all \(f\in \gen{X}\) we have \(f \hat{\theta} =\hat{\theta}\phi(f)\).
Notice that in this situation, the graph of \(\phi(f)\) is the image of the graph of \(f\) under the
transformation \((x,y) \mapsto (x\hat\theta,y\hat\theta)\).
We will refer to such a \(Y \subseteq \HomeoI\) as \emph{terminal}.
Theorem \ref{SemiConjThm} can now be stated as follows.

\begin{thm}
Each dynamical diagram \(D\) can be realized by a terminal \(X_D \subseteq \PLoI\).
\end{thm}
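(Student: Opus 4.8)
The plan is to build $X_D$ explicitly inside $\PLoI$ so that the semi-conjugacy $\hat\theta$ can be described combinatorially, and then to verify that the abstract identifications forced by Theorem \ref{CombToIso} are exactly realized by a collapsing map. First I would fix a finite dynamical diagram $D$ with feet $v_0 < v_1 < \cdots < v_{m-1}$ in the diagram's vertex order and bumps $e_0,\dots,e_{k-1}$. Since every vertex has total degree $1$, the feet come in matched source/destination pairs. I would assign to each foot $v_j$ a nonempty open subinterval $U_j$ of $I$, with the $\overline{U_j}$ ordered in $I$ exactly as the $v_j$ are ordered in $D$, and with the complement of $\bigcup_j \overline{U_j}$ a union of ``gap'' intervals. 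For each bump $e = (v_{\src}, v_{\dest})$ I would define a signed bump $b_e \in \PLoI$ whose support is the smallest interval containing $U_{\src}$ and $U_{\dest}$ together with all feet lying between them, chosen piecewise-linear so that: its marker lies at the right endpoint of $U_{\src}$ when $e$ is a positive bump (so that $\src(b_e) = U_{\src}$ and $\dest(b_e) = U_{\dest}$ as half-open/open intervals, respecting the deliberate asymmetry in the definition of feet), and so that $b_e$ carries each $U_j$ strictly inside the support affinely onto a subinterval of $I$ chosen once and for all. With a little care — e.g.\ defining all the $b_e$ as compositions of very ``steep'' standard PL bumps, or simply invoking the elementary fact that any finite degree-$1$ dynamical diagram is realized by a geometrically fast $X_D \subseteq \PLoI$ (the realizability statement already quoted in the excerpt, cf.\ \cite[Lemma 4.2]{CFP} and the remark after Proposition \ref{IsolatedFixProp}) and then checking fastness via Proposition \ref{FastCriterion} — one gets a geometrically fast $X_D \subseteq \PLoI$ with dynamical diagram $D$.

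Next, given an arbitrary geometrically fast $X \subseteq \HomeoI$ with dynamical diagram $D$, I would let $A$ be the bumps used in $X$ and $A_D$ the bumps used in $X_D$, and take $\phi : \gen{X} \to \gen{X_D}$ to be the marked isomorphism furnished by Theorem \ref{CombToIso}, together with the order-preserving bijection $\theta : M\gen{X} \to M\gen{X_D}$ from that theorem. The key point is to extend $\theta$ to all of $I$. I would do this by the standard ``supremum'' recipe: for $x \in I$ set $x\hat\theta := \sup\{\, t\theta : t \in M\gen{X},\ t \le x \,\}$ (with $0\hat\theta := 0$). Monotonicity is immediate. For surjectivity and continuity, the decisive facts are that $M\gen{X_D}$ is order-dense in $I$ whenever $X_D$ is chosen terminally — this is where the explicit construction matters: each $\gen{X_D}$-orbit of a marker must be dense in the union of the orbitals it visits, which can be arranged because in $\PLoI$ one can make the elements of $X_D$ expand each $U_j$ across the whole support — so $\hat\theta$ has dense image and is therefore continuous and surjective. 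Equivariance $f\hat\theta = \hat\theta\,\phi(f)$ on $I$ then follows from the already-established equivariance on the dense set $M\gen{X}$ (i.e.\ $\theta(sa) = \theta(s)\tau(a)$ from the proof of Theorem \ref{CombToIso}) together with continuity of $f$, $\phi(f)$, and $\hat\theta$: both $x \mapsto (xf)\hat\theta$ and $x \mapsto (x\hat\theta)\phi(f)$ are continuous and agree on a dense set.

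The main obstacle, and the step deserving the most care, is arranging that $X_D$ is genuinely \emph{terminal} rather than merely a correct realization of $D$: I must guarantee that $M\gen{X_D}$ is dense in $I$, equivalently that the orbit of each marker of $X_D$ accumulates everywhere in the corresponding orbital of $\gen{X_D}$. This is a statement about the global dynamics of the group generated by $X_D$, not about a single bump, so it requires choosing the PL bumps in a compatible, ``maximally expanding'' way across each maximal stretched transition chain and across the $\Z$-factors coming from isolated bumps, and then checking — using the ping-pong description of orbits from Section \ref{PingPongSec}, in particular Lemmas \ref{PingPong}, \ref{TreeAction}, and the history analysis — that no orbit of a marker has a gap in its closure. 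A secondary point to handle carefully is the half-open/open asymmetry of feet: the collapsing map $\hat\theta$ will typically send whole gap intervals of one side to points, and one must check this does not conflict with $\theta$ being an honest order-preserving bijection of $M\gen{X}$ onto $M\gen{X_D}$; this is exactly why $\hat\theta$ is only a semi-conjugacy (a monotone surjection) and not a conjugacy. Once density is secured, everything else is routine extension-by-continuity.
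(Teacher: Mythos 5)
Your overall architecture matches the paper's: an explicit PL realization $X_D$, the marked isomorphism and the order-preserving bijection $\theta$ from Theorem \ref{CombToIso}, the supremum extension $\hat\theta$, and the recognition that everything hinges on $M\gen{X_D}$ being dense in $I$. But the step you yourself flag as ``deserving the most care'' is exactly the step you do not supply, and your sketch of it would fail. First, density is simply \emph{false} when $D$ has an isolated bump: no choice of PL representative helps, since the orbit of an isolated bump's marker inside that bump's orbital accumulates only at the orbital's endpoints (the paper records this explicitly in a remark after Claim \ref{OrbitDensity}). The paper therefore begins by invoking Proposition \ref{IsolatedFixProp} to reduce to diagrams with no isolated bumps; your proposal omits this reduction, and your assertion that one can always ``expand each $U_j$ across the whole support'' is exactly what an isolated bump forbids. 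Second, even without isolated bumps, ``maximally expanding'' plus an appeal to Lemmas \ref{PingPong} and \ref{TreeAction} is not an argument; the paper's mechanism is quantitative and of a different flavor. It partitions $I$ into $2n$ half-open intervals of equal length $\ell=1/(2n)$, one per foot, and takes each $b_i$ linear on $(x_i,s_i)$ and on $(s_i,y_i)$. Because every foot has length exactly $\ell$ while every middle interval $[s_i,t_i)$ has length at least $\ell$ (this is where the absence of isolated bumps is used), each $b_i$ has slope at least $2$ on its source and each $b_i^{-1}$ has slope at least $2$ on its source. Any subinterval of $I$ containing no endpoint of the partition lies inside a single foot, so applying the appropriate generator at least doubles its length, and an induction on length yields Claim \ref{OrbitDensity}. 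You need to either reproduce this doubling argument or replace it with something equally concrete.

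A smaller but genuine flaw: your justification of equivariance, ``both maps are continuous and agree on a dense set,'' does not work, because for an arbitrary $X$ in the class $M\gen{X}$ need \emph{not} be dense in $I$ (only $M\gen{X_D}$ is dense, on the target side). Agreement of $x\mapsto xf\hat\theta$ and $x\mapsto x\hat\theta\phi(f)$ on the possibly non-dense set $M\gen{X}$ proves nothing by continuity alone. The correct route is to compute directly from the definition $x\hat\theta=\sup\{t\theta \mid t\in M\gen{X},\ t\le x\}$, using that $M\gen{X}$ is $f$-invariant, that $f$ is an order isomorphism of $I$, and that $\phi(f)$ is continuous and monotone, so that the supremum commutes with applying $\phi(f)$. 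Your closing observation about the half-open/open asymmetry of feet and why $\hat\theta$ is only a semi-conjugacy is correct and consistent with the paper.
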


\begin{proof}
As in the proof of Theorem \ref{CombToIso}, it suffices to prove the theorem
under the assumption that all bumps in \(D\) are positive and all labels are distinct.
Furthermore, by Proposition \ref{IsolatedFixProp}, we may assume that \(D\) has no isolated bumps.
Let \(n\) denote the number of bumps of \(D\), set \(\ell := 1/(2n)\) and 
\[
\mathcal{J} : = \{ [i \ell, (i+1) \ell) \mid 0 \leq i < 2n \},
\]
observing that \(\mathcal{J}\) has the same cardinality as the set of feet of \(D\).
Order \(\mathcal{J}\) by the order on the left endpoints of its elements.
If \(i < 2n\), we will say that the \(i\Th\) interval in \(\mathcal{J}\) \emph{corresponds} to the \(i\Th\) foot
of \(D\).

For \(i < n\), let \((x_i,s_i)\) and \((t_i,y_i)\) be the intervals in \(\mathcal{J}\) which correspond to the left and right feet of
the \(i\Th\) bump of \(D\), respectively.
Note that since \(D\) has no isolated bumps, \(s_i < t_i\).
Define \(b_i\) to be the bump which has support \((x_i,y_i)\), maps \(s_i\) to \(t_i\) and is linear on
\((x_i,s_i)\) and \((s_i,y_i)\) --- see Figure \ref{PLBumpFig}.
\begin{figure}
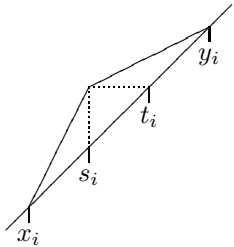

\[
\xy
(0,0); (30,30)**@{-};
(3,3); (11,19)**@{-}; (27,27)**@{-}; (11,11); (11,19)**@{.};
(19,19)**@{.};
(3,3); (3,1)**@{-}; (3,-1)*{x_i};
(11,11); (11,9)**@{-}; (11,7)*{s_i};
(19,19); (19,17)**@{-}; (19,15)*{t_i};
(27,27); (27,25)**@{-}; (27,23)*{y_i};
\endxy
\]
\caption{The function \(b_i\).\label{PLBumpFig}}
\end{figure}
If we assign \(b_i\) the marker \(s_i\), then 
the feet of \(b_i\) are either in \(\mathcal{J}\) or are the interior of an element of \(\mathcal{J}\);
in particular, the feet of \(X_D = \{b_i \mid i < n\}\) are disjoint.
Thus \(X_D\) is geometrically fast and has dynamical diagram isomorphic to \(D\).

Notice that the feet \((x_i,s_i)\) and \([t_i,y_i)\) of \(b_i\)
are each intervals of length \(\ell\) contained in
\(I\) while the middle interval \([s_i,t_i)\) is of length \(m\ell\) for some positive integer \(m\).
Moreover, since \(D\) has no isolated bumps,
there is an interval of \(\mathcal{J}\) between \(s_i\) and \(t_i\);
in particular, \(t_i - s_i \geq \ell\).
It follows that the slope of the graph of \(b_i\)
on its source \((x_i,s_i)\) and the slope of the graph of \(b_i^{-1}\)
on its source \([t_i,y_i)\) are both at least 2.

\begin{claim}\label{OrbitDensity}
If \(X_D\) is the set of positive bumps constructed above, then \(M\gen{X_D}\) is dense in \(I\).
\end{claim}

\begin{remark}
Note that we are working under the assumption that \(D\) has no isolated elements.
If \(D\) has isolated bumps, then \(M\gen{X_D}\) can not be dense.
\end{remark}

\begin{proof} 
Since every transition point of \(X_D\) is in the closure of \(M\gen{X_D}\),
it suffices to show that if \(0 \leq p < q \leq 1\), then 
\((p,q)f\) contains an endpoint of an interval in \(\mathcal{J}\) for some \(f \in \gen{X_D}\).
The proof is by induction on the minimum \(k \geq 0\) such that \(\ell 2^{-k} < q-p\).
Observe that if \(k = 0\), then \(q-p > \ell\) and thus \(q-p\) must contain an endpoint of
an interval in \(\mathcal{J}\).

Next observe that if \((p,q)\) does not contain an endpoint of an element of \(\mathcal{J}\),
then \((p,q)\) is contained in the foot of some \(b_i\) for \(i < n\).
If \((p,q) \subseteq \src(b_i)\), then since the derivative of \(b_i\) is at least 2 on it source,
it follows that  \((p,q)b_i\) is at least twice as long as \((p,q)\).
By our induction hypothesis, there is an \(f \in \gen{X_D}\) such that
\((p,q)b_i f\) contains an endpoint of \(\mathcal{J}\).
Similarly, if \((p,q) \subseteq \src(b_i^{-1})\), then \((p,q)b_i^{-1}\) is at least twice
as long as \((p,q)\) and we can find an \(f \in \gen{Y}\) such that
\((p,q)b_i^{-1} f\) contains an endpoint of \(\mathcal{J}\).
\end{proof}

In order to see that \(X_D\) is terminal, let
\(X \subseteq \HomeoI\) be geometrically fast, have finitely many transition points, and have a dynamical diagram
isomorphic to \(D\).
Let \(\theta : M\gen{X} \to M\gen{X_D}\) be order preserving and satisfy that
\(t f \theta = t \theta \phi(f)\) for all \(f \in M\gen{X}\),
where \(\phi : \gen{X} \to \gen{X_D}\) is the marked isomorphism.
Define \(\hat \theta : I \to I\) by
\[
x \hat \theta := \sup \{t\theta \mid t \in M\gen{X} \textrm{ and } t \leq x\}
\]
where we adopt the convention that \(\sup \emptyset = 0\).
Clearly \(\hat \theta : I \to I\) is order preserving and extends \(\theta\).
In particular its range contains \(M\gen{X_D}\), which by
Claim \ref{OrbitDensity} is dense in \(I\).
It follows that \(\hat \theta\) is a continuous surjection (\emph{any} order preserving map from \(I\) to \(I\) with
dense range is a continuous surjection).
That \(x f \hat \theta = x \hat \theta \phi (f)\) follows from
the fact that this is true for \(x \in M\gen{X}\) and from the continuity of \(f\) and \(\phi(f)\).
This completes the proof that \(X_D\) is terminal.
\end{proof}

\section{Fast generating sets for the groups \(F_n\)}
\label{FnSec}

In this section we will give explicit generating sets
for some well known variations of Thompson's group \(F\).
First notice that since \((0,1)\) is homeomorphic to \(\R\) by an order preserving map,
all of the analysis of geometrically fast subsets of \(\HomeoI\) transfers to \(\HomeoR\)
(with the caveat that \(\pm \infty\) must be considered as possible
transition points of elements of \(\HomeoR\); see Example \ref{x+1_x^3_ex} below).
Fix an integer \(n\ge2\).
For \(i < n\), let \(g_i\) be a homeomorphism from \(\R\) to itself defined by:
\[
tg_i := \begin{cases}
t & \textrm{if } t\le i \\
i+n(t-i) & \textrm{if } i\le t\le i+1 \\
t+(n-1) & \textrm{if } i+1 \le t.
\end{cases}
\]
In words, \(g_i\) is the identity below \(i\), has constant slope
\(n\) on the interval \([i,i+1]\), and is translation by \(n-1\)
above \(i+1\).
We will use \(F_n\) to denote \(\gen{g_i \mid i < n}\).
The group \(F_2\) is one of the standard
representations of Thompson's group \(F\).
(The more common representation of \(F\) is as a set of piecewise linear
homeomorphisms of the unit interval \cite[\S1]{CFP}.)

The groups \(F_n\) \((n \ge 2)\)  are discussed in
\cite[\S4]{brown:finiteprop} where \(F_n\) is denoted
\(F_{n,\infty}\), and in \cite[\S2]{brin+fer} where \(F_n\) is
denoted \(F_{n,0}\).
The standard infinite presentation of \(F_n\) is given in \cite[Cor. 2.1.5.1]{brin+fer}.
It follows easily from that
presentation that the commutator quotient of \(F_n\) is a free abelian group of rank \(n\).
In particular the \(F_n\)'s are pairwise nonisomorphic.

We now describe an alternate generating set for \(F_n\) which 
consists of \(n\) positive  bump functions and is geometrically fast.
For \(i<n-1\), set \(h_i : = g_i g_{i+1}^{-1}\) and let
\(h_{n-1}\) denote \(g_{n-1}\).
It is clear that \(C = \{h_i \mid i<n\}\) generates \(F_n\).
We claim that \(C\) is a geometrically fast stretched transition chain.
If \(i < n-1\), then \(h_i\) is the identity outside \([i,i+2]\).
On that interval it is a positive, one bump function since chain
rule considerations show that \(h_i\) has slope \(n\) on
\([i,i+\frac 1 n]\), slope one on \([i+\frac 1 n, i+1]\) and slope
\(1/n\) on \([i+1, i+2]\). 
Thus the support of \(h_i\) is \((i,i+2)\) and
the support of \(h_{n-1}\) is \((n-1, \infty)\).
In particular, \(C\) forms a stretched transition chain.

Next we will show that \(C\) is geometrically fast.
Observe that for \(0 < j \leq n\) that we have:
\[ 
\left( i+\frac{j}{n} \right) h_i
= 
(i+1) + \frac{j-1}{n}.
\]

For \(0\le i\le n\) set
\(t_i := i+ \frac{n-i}{n}\).
It follows from the above computation that for \(0\le i<n-1\), we
have \(t_ih_i = t_{i+1}\) so that \(t_0=1\) and \(t_n=n\).  This
makes \(t_0\) the leftmost transition point in the support of
\(h_0\) and \(t_n\) the rightmost transition point in the support of
\(h_{n-1}\).
By Proposition \ref{FastCriterion}, \(C\) is geometrically fast.
Combining this with Theorem \ref{CombToIso}, we now have that
any geometrically fast stretched transition chain generates a copy of \(F_n\).

\begin{remark}
The above proof that \(F_n\) is isomorphic to the group generated by a geometrically fast
stretched transition chain of length \(n\) is not an efficient
way to reach this conclusion.
There is a more straightforward
argument based on the standard presentation of \(F_n\) as indicated
in the proof of \cite[Proposition 1.11]{kim+koberda+lodha}.
We included this example as an introduction to the
variety of isomorphism classes in groups generated by geometrically fast sets.  
\end{remark}

\section{Excision of extraneous bumps in fast generating sets}

\label{ExcisionSec}

Sometimes fast generating sets use bumps which do not affect the marked isomorphism type
of the resulting group.
This section gives a sufficient condition for when those bumps
can be excised while preserving the marked isomorphism type.
We make no finiteness assumptions in this section.

If \(f \in \HomeoI\) and \(E\) is a set of positive bumps, then we define \(f/E \in \HomeoI\) to be the function
which agrees with \(f\) on  
\[
I \setminus \bigcup \{\supt (a) : a \in E \textrm{ and } a \textrm{ is used in } f\}
\]
and is the identity elsewhere.
If \(X\) is a fast generating set and \(E\) is a set of positive bumps, we 
define \(X/E := \{f/E : f \in X\}\).

Now let \(X\) be a fast generating set and let \(J \subseteq I\) be an interval.
We say a set \(E\) of bumps is \emph{extraneous in \(X\) as witnessed by \(J\)} if
for some \(f \in X\):
\begin{itemize}

\item every element of \(E\) is an isolated bump used in \(f\) whose
support is contained in \(J\);

\item there is a bump \(a\) used in \(f\) not in \(E\) such that \(J\) contains a foot of \(a\);

\item \(J\) is disjoint from the feet of all \(g \in X \setminus \{f\}\).

\end{itemize}
Observe that if \(E\) is extraneous in \(X\), then \(g \mapsto g/E\) preserves the canonical ordering of \(X\).

\begin{thm} \label{Excision}
If \(X \subseteq \HomeoI\) is a (possibly infinite) geometrically fast set and
\(E\) is an extraneous set of bumps used in \(X\),
then the map \(g \mapsto g/E\) extends to an isomorphism between \(\gen{X}\) and \(\gen{X/E}\).
\end{thm}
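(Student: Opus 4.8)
The plan is to prove the equivalent statement that an $X$-word $\str{w}$ represents the identity of $\gen{X}$ if and only if the corresponding $(X/E)$-word represents the identity of $\gen{X/E}$, under the letter correspondence $g \leftrightarrow g/E$; this exhibits $\gen{X}$ and $\gen{X/E}$ as quotients of the same free group on the same alphabet by the same normal subgroup, and the resulting isomorphism visibly carries $g$ to $g/E$. Since extraneity is a local condition, it suffices to treat the case witnessed by a single interval $J$ with associated generator $f$ (if one reads ``extraneous set of bumps used in $X$'' as a disjoint union of such pieces, the general case follows by iteration, each excision leaving the hypotheses governing the remaining pieces intact).

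I would then set $A$ to be the set of bumps used in $X$ and $A' := A \setminus E$ the set of bumps used in $X/E$; here the members of $E$ are used only in $f$ (their feet lie in $J$, which meets no foot of any other generator), and $f/E$ uses precisely the bumps of $f$ not in $E$. Thus $A'$, being a subfamily of the geometrically fast family $A$, is geometrically fast, and we equip it with the marking inherited from $A$. Two elementary dynamical observations drive everything. First, since the bumps in $E$ are bumps of the single homeomorphism $f$, their supports are distinct orbitals of $f$, hence pairwise disjoint and disjoint from $\supt(f/E)$; writing $U := \bigcup\{\supt(a) : a \in E\}$, we get $f = (f/E)\cdot e$ with $e$ supported on $U$, commuting with $f/E$, and with $f$ and $f/E$ agreeing off $U$. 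Second, since each $a \in E$ is isolated, no transition point of $A$ lies in $\supt(a)$, so every $b \in A'$ either has $\supt(b)$ disjoint from $\supt(a)$ or has $\supt(b) \supseteq \supt(a)$ --- and in the latter case $b$ is a bump of some generator $g \neq f$ (distinct orbitals of $f$ are disjoint, so no bump of $f$ contains $\supt(a)$), whence $\dest(b)$ is disjoint from $J \supseteq \supt(a)$ and $b$ carries $\supt(a)$ out of $J$ into $\dest(b)$. In particular the only way to move within $\supt(a)$ is by $f^{\pm1}$, acting there as $a^{\pm1}$, and a point whose forward $X$-orbit along the prefixes of $\str{w}$ never enters $U$ is moved by $\str{w}$ acting through $X$ exactly as it is moved acting through $X/E$.

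With this in hand I would compare word problems using the Section~\ref{PingPongSec} machinery: Theorem~\ref{Faithful} reduces triviality of a word in $\gen{X} \leq \gen{A}$ (resp.\ $\gen{X/E} \leq \gen{A'}$) to fixing $M_A\gen{A}$ (resp.\ $M_{A'}\gen{A'}$) pointwise; Lemmas~\ref{X_to_A} and \ref{FindPrefix} convert $X$- and $(X/E)$-words into $A$- and $A'$-words; and Lemma~\ref{PingPong} together with Lemmas~\ref{Collision}, \ref{TreeAction} and \ref{FellowTraveler1}--\ref{FellowTraveler3} let one track a moved point through the ping-pong graph. For the implication ``nontrivial in $\gen{X/E}$ $\Rightarrow$ nontrivial in $\gen{X}$'', the point is that $A' \subseteq A$ with feet still pairwise disjoint, so the local-reduction and threading arguments apply verbatim and a nontrivial $A'$-word stays nontrivial as an $A$-word; the extra free room contributed by $e = \prod_{a \in E}a^{\pm}$ only enlarges $\gen{A}$ and is irrelevant. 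For the reverse implication, a witness $y \in M_A\gen{A}$ to nontriviality of $\str{w}$ through $X$ must be replaced by one lying off $\overline{U}$, in the region where the $X$- and $(X/E)$-actions coincide: using that leaving $\supt(a)$ forces applying some $g \neq f$ (which ejects the point from $J$) and that inside $\supt(a)$ only the cyclic $a$-action occurs, one moves $y$ out of $U$ by a power of $e$ followed by $a_0$, the non-$E$ bump of $f$ having a foot in $J$, without changing whether $\str{w}$ fixes it.

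The main obstacle will be exactly this marker bookkeeping. Each $a \in E$ is an isolated, hence initial, bump, so its marker lies in $M_A$ but not in $M_{A'}$; thus the faithful set for $X$ is strictly larger than that for $X/E$, and one must show the extra orbits are \emph{inert} --- that no element of $\gen{X}$ is detected only along them. The ``move the witness out of $U$'' step is what accomplishes this, but making it precise --- in particular verifying that the power of $e$ and the subsequent application of $a_0$ can always be arranged, genuinely land in the common region, and interact correctly with local reducedness --- is the delicate part. Once it is in place, the two word problems coincide and the isomorphism $\gen{X} \cong \gen{X/E}$ via $g \mapsto g/E$ follows.
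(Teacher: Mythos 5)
Your skeleton matches the paper's: reduce via Theorem \ref{Faithful} to comparing the actions on the faithful sets, observe that $g$ and $g/E$ agree on $\widetilde S := M_{X/E}\gen{X/E}$ (which is $\gen{A}$-invariant and disjoint from $U$), and conclude that the only thing left to prove is the hard direction --- that an $X$-word moving a point in the orbit of a marker of some $a\in E$ must also move a point of $\widetilde S$. You correctly identify this as the crux, but you do not prove it, and the one concrete mechanism you propose for it does not work: you say one ``moves $y$ out of $U$ by a power of $e$ followed by $a_0$,'' but $e$ is supported on $U$ and so preserves it, while $a_0$ (being a different orbital of $f$) is supported off $U$ and hence fixes $U$ pointwise; the composition therefore cannot move any point of $U$ out of $U$. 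So the inertness of the extra orbits is asserted, flagged as delicate, and then left unestablished.

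What is actually needed here splits into two cases, and both require real work. First, if $w$ moves some point of $\supt(a)$ out of $\supt(a)$, one applies Lemma \ref{PingPong} to a locally reduced word $\str{uv}$ (with $\str{u}$ the maximal prefix keeping the point inside $\supt(a)$) to see that the image lands in $\dest(\str{v})$; since both endpoints of $\dest(\str{v})$ are limits of $\widetilde S \cap \dest(\str{v})$, the homeomorphism $w$ must move a point of $\widetilde S$. Second, if $w$ maps $\supt(a)$ into $\supt(a)$, one factors $\str{w}$ into minimal positive-length subwords each carrying $\supt(a)$ into $\supt(a)$, and proves --- using Lemmas \ref{X_to_A}, \ref{FindPrefix} and \ref{FellowTraveler1} to pass between $X$-words and locally reduced $A$-words on the interval $Jg$ --- that each such factor restricted to $J$ equals $f^{p}\restriction J$ for some $p\in\{-1,0,1\}$. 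Only then does a point $t\in J\cap\widetilde S$ chosen near the transition point of $a_0$ satisfy $tw=tf^{\sum p_i}$, which is moved exactly when the marker $s$ of $a$ is, since $sw=sf^{\sum p_i}$. This factorization claim is the heart of the theorem and is absent from your proposal; without it (or a substitute), the argument is incomplete.
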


\begin{proof}
Fix \(X\) and \(E\) as in the statement of the theorem and
let \(J\) be an interval witnessing that \(E\) is extraneous
and \(f \in X\) be the element of \(X\) such that the elements of \(E\) occur in \(f\).
Let \(A\) denote the set of bumps used in \(X\),
\(M\) denote \(M_X\), and \(\widetilde M\) denote \(M_{X/E}\).
Set \(S :=M \gen{X} = M \gen{A}\) and
\(\widetilde S := \widetilde M \gen{X/E} = \widetilde M \gen{A \setminus E}\).
Observe that since the elements of \(E\) are isolated in \(A\) and since
every element of \(\widetilde M \gen{A \setminus E} \setminus \widetilde M\) is in \(\dest(b)\) for some
\(b \in (A \setminus E)^{\pm}\), it follows that \(\widetilde M \gen{A \setminus E} = \widetilde M \gen{A}\).
In particular, \(\widetilde S\) is \(\gen{A}\)-invariant and hence \(g \mapsto g \restriction \widetilde S\)
defines a homomorphism of \(\gen{X}\) to \(\gen{X} \restriction \tilde S\).

Recall that, by Theorem \ref{Faithful}, \(g \mapsto g \restriction S\) defines an isomorphism
between \(\gen{X}\) and \(\gen{X} \restriction S\).
Similarly, \(g/E \mapsto g/E \restriction \widetilde S\) defines an isomorphism between \(\gen{X/E}\)
and \(\gen{X/E} \restriction \widetilde S\).
Since \(g \restriction \widetilde S = (g/E) \restriction \widetilde S\), 
it suffices to show that \(g \mapsto g \restriction \widetilde S\) is injective on \(\gen{X}\).
That is, if \(\str{w}\) is an \(X\)-word, \(s\) is a marker for an \(a \in E\),
and \(suw \ne su\) for some \(X\)-word \(\str{u}\),
then there is a \(t \in \widetilde S\) such that \(tuw \ne tu\).
Equivalently we need to show that for every marker \(s\) of an \(a \in E\) and \(X\)-word \(\str{w}\),
if  \(sw \ne s\), then there is a \(t \in \widetilde S\) with \(tw \ne t\).
To this end let such \(s\), \(a\), and \(\str{w}\) be given with \(sw \ne s\).

\begin{claim}
If there is an \(x \in \supt(a)\) such that \(xw \not \in \supt(a)\), then there is a \(t \in \widetilde S\)
such that \(tw \ne t\).
\end{claim}

\begin{proof}
Suppose that \(xw \not \in \supt(a)\) for some \(x \in \supt(a)\).
Let \(\str{uv}\) be a locally reduced at \(x\) such that \(xuv = xw\) with \(\str{u}\) maximal
such that \(xu \in \supt(a)\);
notice that \(\str{v} \ne \varepsilon\).
Since \(a\) is isolated, \(xu \not \in \src(\str{v})\) and thus by Lemma \ref{PingPong}
\(xuv \in \dest(\str{v})\).
It follows that \(w\) must move an endpoint of \(\dest(\str{v})\), both of which are in
the closure of \(\widetilde S \cap \dest(\str{v})\).
Hence \(w\) moves an element of \(\widetilde S\). 
\end{proof}

We may therefore assume that \(sw \ne s\) but that \(w\) maps \(\supt(a)\) to \(\supt(a)\).
Let \(\str{w} = \prod_{i < n} \str{w}_i\) be a factorization of \(\str{w}\) into minimal
positive length words which define maps from \(\supt(a)\) into \(\supt(a)\).
Let \(b\) be the bump used in \(f\) with \(b \not \in E\) and such that
\(J\) contains a foot of \(b\).
Let \(t \in J \cap \widetilde S\) be sufficiently close to the transition point of \(b\) which is in \(J\)
such that for all \(k \in \Z\) with \(|k| \leq |\str{w}|\), \(t f^k \in J\).

\begin{claim}
For all \(i < n\), there is a \(p \in \{1,0,-1\}\) such that
 \(w_i \restriction J = f^p \restriction J\).
\end{claim}

\begin{proof}
If \(\str{w}_i\) begins with \(f\) or \(f^{-1}\), then it must have length \(1\) and there is nothing to
show.
If \(\str{w}_i\) begins with \(\str{g} \ne \str{f}^{\pm 1}\),
then either \(J\) is contained in or disjoint from the support of
\(g\) and it is disjoint from all of the feet of \(g\).
If \(g \restriction J\) is the identity, then again \(\str{w}_i\) has length 1.

Now suppose that \(J\) is contained in the support of \(g\) where \(g\) is neither \(f\) nor \(f^{-1}\) and
has \(s\) in its support.
Notice that since \(J\) is disjoint from the feet of \(g\),
\(J\) is contained in the support of a single bump \(c \in A^\pm\) of \(g\).

We will now show that \(sw_i = s\).
Let \(\str{gv}\) be a minimal prefix of \(\str{w}_i\) such that \(sgv \in \supt(a)\).
By Lemmas \ref{FellowTraveler1} and \ref{X_to_A}, there is an \(A\)-word \(\str{u}\) which 
is locally reduced on \(Jc = Jg\) and such that \(u\) and \(v\) coincide on \(Jg\).
Since \(\str{c}\) is locally reduced at \(s\) and \(\str{u}\) is locally reduced at \(sc\),
the free reduction of \(\str{cu}\) is locally reduced at \(s\).
If this free reduction is not \(\varepsilon\), then it must be that \(\str{cu}\) was freely reduced and
\(scu \in \dest(\str{cu})\).
Since \(scu = sgv \in \supt(a)\), \(\str{cu}\) would have to end in \(\str{a}\) or \(\str{a}^{-1}\).
But then if \(\str{u}'\) is the result of deleting the last symbol of \(\str{u}\),
then \(scu'\) is in \(\supt(a)\) and, by Lemma \ref{FindPrefix}, coincides with
\(sgv'\) for some proper prefix \(\str{v}'\) of \(\str{v}\).
This contradicts the minimal choice of \(\str{gv}\).
It must therefore have been that \(\str{u} = \str{c}^{-1}\).
It follows that \(c^{-1}\) and \(v\) coincide on \(Jg\) and thus
\(gv\) is the identity on \(J\).
By minimality of \(\str{w}_i\), \(\str{w}_i = \str{v}\) and thus \(\str{w}_i\) is the identity on \(J\). 
\end{proof}

Applying the claim, there are \(p_i \in \{1,0,-1\}\) for \(i < n\) such that:
\[
sw = s\prod_{i < n} f^{p_i}
\qquad \qquad 
tw = t \prod_{i < n} f^{p_i}.
\]
In particular, since \(sw \ne w\), it follows that \(tw \ne t\).
\end{proof}

\begin{cor}
If \(X \subseteq \HomeoI\) is geometrically fast and finite, then there is a
geometrically fast \(Y \subseteq \HomeoI\) which has finitely many transition points such that
\(\gen{X}\) is marked isomorphic to \(\gen{Y}\).
\end{cor}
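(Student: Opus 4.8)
The plan is to build $Y$ by excising from $X$, via Theorem~\ref{Excision}, all but finitely many of the bumps that $X$ uses. As in the proof of Theorem~\ref{CombToIso} it is harmless to pass at once to the set $A$ of bumps used in $X$, so I will assume $X=A$ is a (possibly infinite) geometrically fast collection of positive bumps arising from finitely many homeomorphisms.

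First I would show that $A$ has only finitely many non-isolated bumps. Fix one of the underlying homeomorphisms $f$. Its orbitals form a pairwise disjoint family of intervals of $I$, so if infinitely many of them contained a transition point of $A$ in their interior, Bolzano--Weierstrass would produce a subsequence of such orbitals shrinking to a point $p$. Each such orbital contains a transition point of $A$, which must be an endpoint of an orbital of a \emph{different} underlying homeomorphism; pigeonholing over the finitely many homeomorphisms yields a single $g\neq f$ whose orbitals also shrink toward $p$ from the same side as $f$'s. Then $p$ is, say, a right transition point of both $f$ and $g$, contradicting the geometric properness of $X$. Summing over the finitely many homeomorphisms, only finitely many bumps of $A$ are non-isolated, so apart from finitely many, all transition points of $A$ are endpoints of isolated bumps.

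Second, I would analyse the isolated bumps near a point $p$ at which the transition points of $A$ accumulate. Using the previous paragraph together with another application of properness, on each side of $p$ all but finitely many of the orbitals clustering at $p$ are isolated bumps of a \emph{single} underlying homeomorphism $f$; call this set a \emph{tail} $E_p$. I claim $E_p$ is extraneous in the sense of Section~\ref{ExcisionSec}: take as witness a short interval $J$ with endpoint $p$ containing $E_p$ together with a foot of the one bump of $f$ lying just outside $E_p$ (which is not in $E_p$ and is used in $f$); geometric properness, and the disjointness of the feet of $A$, force $J$ to miss the feet of every element of $X$ other than $f$. Hence Theorem~\ref{Excision} removes $E_p$ without disturbing the marked isomorphism type, and $X/E_p$ is again geometrically fast.

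Finally one must excise all such tails. Since the feet of $A$ are pairwise disjoint, the transition set of $X$ is countable, hence its closure is a countable --- so scattered --- closed subset of $I$; I would excise tails along a transfinite recursion guided by the Cantor--Bendixson derivative of this closure, removing at each stage the (pairwise disjointly witnessed) tails at the accumulation points that have just become isolated, and taking unions of the excisions performed so far at limit stages. The step I expect to be the main obstacle is making the limit stages legitimate: this needs a ``simultaneous excision'' refinement of Theorem~\ref{Excision}, to the effect that a disjoint union of extraneous families whose witnessing intervals are pairwise disjoint may be excised in a single step (the proof of Theorem~\ref{Excision} is essentially local and ought to localise to give this). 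Granting it, the recursion terminates because the Cantor--Bendixson rank of a countable closed set is a countable ordinal, and at the end the geometrically fast set $Y$ has a transition set with no accumulation points, hence closed and discrete in the compact interval $I$, hence finite. Since $Y$ was obtained from $X$ by excising extraneous bumps, $\gen{X}$ is marked isomorphic to $\gen{Y}$.
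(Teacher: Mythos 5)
Your structural analysis via geometric properness is sound and is essentially the mechanism the paper also relies on, but the proof as proposed has a genuine gap exactly where you flag it: the limit stages of the transfinite recursion. Theorem \ref{Excision} produces a marked isomorphism $\gen{X} \to \gen{X/E}$ for a \emph{single} extraneous set $E$, witnessed by a \emph{single} interval $J$ attached to a \emph{single} $f \in X$; nothing proved in the paper licenses excising infinitely many such families at once, nor composing infinitely many of these isomorphisms, and your ``simultaneous excision'' refinement is left unproved. The need for it is real: one generator may have isolated bumps clustering at each point $1/n$, with these cluster points in turn accumulating at $0$, so even the first round of your recursion already requires infinitely many excisions before the second-order accumulation point can be addressed. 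A smaller point: taken literally, replacing $X$ by its set $A$ of bumps would make every extraneous set empty, since the definition of extraneous requires the relevant $f$ to use a bump \emph{outside} $E$ with a foot in $J$ in addition to the bumps of $E$; the excision machinery must be applied to the multi-bump generators themselves, not to $A$.

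The paper closes this gap not by proving simultaneous excision but by choosing the extraneous sets less locally, so that finitely many applications of Theorem \ref{Excision} suffice. Let $\mathcal{J}$ be the collection of maximal intervals $J$ that contain a transition point of some $f \in X$, contain no transition point of $X \setminus \{f\}$, and are disjoint from the feet of $X \setminus \{f\}$. Geometric properness --- essentially your Bolzano--Weierstrass and pigeonhole argument --- shows that $\mathcal{J}$ is finite. For each $J \in \mathcal{J}$ containing infinitely many transition points, one takes $E$ to be \emph{all but one} of the isolated bumps of $f$ with support contained in $J$; this $E$ is extraneous (the one retained bump supplies the required foot inside $J$), and, crucially, Theorem \ref{Excision} places no finiteness restriction on $E$, so an entire countable cluster --- including all of its higher-order accumulation structure --- is removed in one step. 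An induction on the number of members of $\mathcal{J}$ containing infinitely many transition points then finishes the proof with no limit stages. You have all the ingredients for this; the fix is to enlarge each of your tails $E_p$ to the full set of isolated bumps of $f$ inside the ambient maximal interval.
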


\begin{proof}
Let \(X\) be given and let \(\mathcal{J}\)  consist
of the maximal intervals \(J \subseteq I\) such that for some \(f \in X\) (dependent on \(J\)):
\begin{itemize}

\item \(J\) contains at least one transition point of \(f\);

\item \(J\) contains no transition points of \(X \setminus \{f\}\);

\item \(J\) is disjoint from the feet of \(X \setminus \{f\}\).

\end{itemize}
Observe that since \(X\) is geometrically proper, \(\mathcal{J}\) is finite.
The proof is by induction on the number of elements of \(\mathcal{J}\) which contain infinitely many
transition points of \(X\).
Suppose that \(J \in \mathcal{J}\) contains infinitely many transition points of some \(f \in X\).
Let \(E\) consist of all but one of the isolated bumps of \(f\) with support contained in \(J\).
Notice that \(J\) witnesses that \(E\) is extraneous in \(X\).
By Theorem \ref{Excision}, \(\gen{X/E}\) is marked isomorphic to \(\gen{X}\).
We are now finished by our induction hypothesis.
\end{proof}

\section[Geometrically proper generating sets]{The existence of geometrically proper generating sets}

\label{GeoProperSec}

In this section we consider the question of when finitely generated subgroups of
\(F\) admit geometrically proper generating sets.
Our first task will be to prove:

\begin{customthm}{\ref{GeoProperGen}}
Every \(n\)-generated one orbital subgroup of \(\PLoI\)
either contains an isomorphic copy of \(F\) or else admits an
\(n\)-element geometrically proper generating set.
\end{customthm}
\noindent
In this section we will use the standard embedding of Thompson's group \(F\) in \(\PLoI\)
and we will make use of the homomorphism \(\pi:\PLoI \rightarrow \R \times \R\)
defined by \(\pi(f) := \big(\log_2 \left(f'(0)\right),\log_2\left(f'(1)\right)\big)\).
If \(G\) is a subgroup of \(\PLoI\), we will write \(\pi_G\) for \(\pi \restriction G\).
It is well known that \(F'\) is exactly
the kernel of \(\pi_F\) and that \(F'\) is simple.
We will need the following lemma which combines the main result of \cite{brin:ubiq} and
Lemma 3.11 of \cite{MR2383051}.

\begin{lemma} \label{CyclicGerm}
Let \(G\) be a finitely generated subgroup of \(\PLoI\) with connected support into which 
\(F\) does not embed.
The image of the homomorphism \(\pi_G\) is either trivial or cyclic.
\end{lemma}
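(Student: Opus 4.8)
The plan is to prove the contrapositive: assuming $F$ does not embed into $G$, deduce that the image $\pi_G(G)$ is trivial or infinite cyclic. Since $\pi_G(G)$ is a finitely generated torsion-free abelian group, it is free abelian of finite rank, so it is enough to rule out the possibility that $\pi_G(G)$ has rank at least $2$. First I would dispose of the easy normalizations: a coordinate of $\pi_G$ vanishes identically whenever the corresponding endpoint of $\supt(G)$ lies in the interior of $I$ (elements of $G$ are then the identity near that point, hence have slope $1$ there). If both endpoints of $\supt(G)$ are interior, $\pi_G$ is trivial and there is nothing to prove; if exactly one is interior we are in a strictly easier one-ended situation; so the substantive case is $\supt(G) = (0,1)$, where $\pi_G$ records precisely the log-slopes at $0$ and at $1$ of elements of $G$, equivalently the germ data of $G$ at the two ends of its single orbital.

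In the main case I would run the argument through the two cited sources in sequence. The first input is the main result of \cite{brin:ubiq} (Brin's ubiquity theorem): it gives a precise dynamical criterion for when $F$ embeds into a subgroup of $\PLoI$, and the hypothesis ``$F \not\hookrightarrow G$'' is exactly the failure of that criterion --- a strong rigidity statement about how $G$ acts on the interval $(0,1)$. The second input is Lemma 3.11 of \cite{MR2383051}, which is designed to convert precisely this kind of rigidity, for a finitely generated subgroup of $\PLoI$ with connected support, into the conclusion that the endpoint-germ homomorphism has cyclic (or trivial) image. Chaining these two implications, and invoking the one-ended case for the degenerate configurations, yields the lemma; what remains is essentially the bookkeeping needed to check that the situation at hand satisfies the hypotheses of \cite[Lemma 3.11]{MR2383051}.

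The hard part will be the interface between the two results. Brin's theorem is naturally phrased as a statement about the interior dynamics of $G$ (existence of elements with partially overlapping orbitals, nontrivial germs at interior fixed points, and the like), whereas the rank of $\pi_G(G)$ is a purely arithmetic statement about slopes at the two boundary points $0$ and $1$. Bridging the two --- showing that if the slope data at $0$ and at $1$ together span a rank-$2$ subgroup of $\R \times \R$ then one can synthesize the interior configuration that Brin's theorem forbids, and conversely that Brin-rigidity pins down the boundary germs --- is where the real work lies, and it is exactly for this bridging step that \cite[Lemma 3.11]{MR2383051} is invoked. Beyond this, the only other point needing care is the normalization above, so that the ``connected support'' hypothesis of \cite[Lemma 3.11]{MR2383051} genuinely applies.
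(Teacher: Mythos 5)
The paper gives no proof of this lemma beyond asserting that it ``combines the main result of \cite{brin:ubiq} and Lemma 3.11 of \cite{MR2383051},'' and your proposal chains exactly those two results in exactly that way (Brin's ubiquity theorem to rule out the forbidden dynamical configuration from \(F \not\hookrightarrow G\), then the cited lemma of \cite{MR2383051} to convert that rigidity into cyclicity of the boundary-germ image), so it matches the paper's approach. Your preliminary normalization to the case \(\supt(G)=(0,1)\) and the observation that the image is a finitely generated free abelian subgroup of \(\R\times\R\) are both consistent with how the lemma is actually used in the proof of Theorem \ref{GeoProperGen}.
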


\begin{proof}[Proof of Theorem \ref{GeoProperGen}]
Let \(G\) be a one orbital subgroup of \(\PLoI\) which
is generated by a finite set \(X\).
By conjugating by an appropriate affine homeomorphism of \(\R\),
we may assume that the support of \(G\) is \((0,1)\).
We assume that \(F\) does not embed in \(G\).
By Lemma \ref{CyclicGerm}, the image of
\(\pi_G\) must then be isomorphic to \(\Z\).
Let \(\varpi\) be the composition of \(\pi_G\) with this isomorphism.
If there is more than one element of \(X\) not in the
kernel of \(\varpi\), then we apply the Euclidean algorithm and
repeatedly reduce \(\sum \left\{ |\varpi(f)| \mid f \in X\right\}\) by replacing at each stage some
\(g\in X\) by \(gh^\epsilon\), for some suitably chosen \(h \in X\) and \(\epsilon\in\{-1,1\}\), where  \(|\varpi(h)|\le |\varpi(g)|\).
Note that this
replacement does not change the cardinality of \(X\).  Thus we can
assume that there is only one element \(f\) of \(X\) not in the
kernel of \(\varpi\).

If \(X = \{g_i \mid i < n\}\), define \(N(X)\) to be the number of
pairs \((j,x)\) such that for some \(i < j\) either \(x\) is left
transition point of both \(g_i\) and \(g_j\) or a right transition
point of both \(g_i\) and \(g_j\).
Observe that \(N(X)\) is finite since \(X\) has only
finitely many transition points and that \(X\) is geometrically
proper precisely if \(N(X) = 0\).
Notice also that if \((j,x)\) is a pair counted by \(N(X)\),
then \(x\) can be neither 0 nor 1.

Assume that \(N(X) > 0\), and let \(x\) be a point that contributes
to \(N(X)\) with \(g\ne h\) two functions in \(X\) with \(x\) a
common left transition point or common right transition point of
both \(g\) and \(h\).
Since \(x\in (0,1)\), our assumption on the support of \(G\)
implies that there is an \(f\in X\) with \(xf \ne x\).
Let \(T\) be the set of transition points of
\(g\) that are moved by \(f\).
Since each point in \(T\) is moved to infinitely
many places by powers of \(f\) and since the set of
transition points of elements of \(X\) is finite, we can find a
power \(f^k\) of \(f\) so that no element of \(T{f^k}\) is a
transition point of an element of \(X\). 
Let \(X'\) be obtained from \(X\) by replacing \(g\) by \(g^{f^k}\).
We have arranged that \(|X'|=|X|\),
that \(\gen{X'} = \gen{X}=G\),
that there is only one element of \(X'\) outside the kernel of \(\varpi\),
and that \(N(X')<N(X)\).
By induction \(\gen{X'} = \gen{X}\)
admits a geometrically proper generating set.
\end{proof}

The assumption that the support of \(G\) be connected in Theorem \ref{GeoProperGen}
turns out to be necessary as the next example shows.
Before proceeding we will develop some notation which will be helpful in proving
Theorem \ref{NoGeoProperGen} as well.
For the remainder of the section, fix two elements \(a\) and \(b\) of \(F\) which are
one bump functions whose supports are, respectively, \(\left(0,\frac{1}{2}\right)\)
and \(\left(\frac{1}{2}, 1\right)\) and which satisfy \(a'(0)=b'(1)=2\).
Notice that \(a\) is a positive bump and \(b\) is a negative bump.

\begin{example}
The group
\(\gen{ ab, a^{-1} b }\) is isomorphic to \(\Z\times\Z\) but has
no geometrically proper generating set.
We leave the details as an exercise for the reader.
\end{example}

\begin{example} \label{x+1_x^3_ex}
It was shown in \cite{x+1_x^p_free} that
subgroup \(\gen{t+1,t^3}\) of \(\HomeoR\)
is free.
Since any minimal generating set for a free group is a free basis for the group,
any minimal generating set is algebraically fast.
On the other hand this group is not embeddable
into \(F\) by \cite{brin+squier} and hence by Corollary \ref{EmbeddInF}
is not isomorphic to a group with a geometrically proper generating set.
\end{example}

Next we will prove Theorem \ref{NoGeoProperGen},
which shows that the ``\(F\)-less'' hypothesis in Theorem \ref{GeoProperGen} can not be removed.
Note that no assumption is made in this theorem concerning the number of bumps used in 
the geometrically proper generating set.

\begin{customthm}{\ref{NoGeoProperGen}}
If a finite index subgroup of \(F\) is isomorphic to \(\gen{X}\) for some  geometrically proper
\(X \subseteq \HomeoI\), then it is isomorphic to \(F\).
\end{customthm}

\begin{proof}
Let \(G\) be a subgroup of \(F\) of finite index such that
\(G \cong \gen{X}\) for some finite geometrically proper \(X \subseteq \HomeoI\).
Let \(H\) denote \(\gen{X}\) and \(\phi : G \to H\) be a fixed isomorphism.
We must show that \(G\) is isomorphic to \(F\).

By \cite{bleak+wassink} we know that, since \(G \leq F\) is of
finite index, it is
of the form \(\pi^{-1}(K)\) for some finite
index subgroup \(K\) of \(\Z\times\Z\).
Furthermore, also by \cite{bleak+wassink},
\(\pi^{-1}(K)\) is isomorphic to \(F\) if and only if \(K\) admits a generating set of the form \(\left\{\left(p,0\right),\left(0,q\right)\right\}\) for
\(p,\,q\in\Z\setminus\{0\}\).

Observe in any case that \(K\) admits a two element generating
set \(\{(p_i,q_i) \mid  i < 2\}\), as it is finite index in \(\Z\times\Z\).
We will first derive some properties of \(G\) which come from viewing it
as a subgroup of \(F\).
Define \(f_i := a^{p_i} b^{q_i}\).
It is shown in \cite{bleak+wassink} that \(G\) can be generated by
\(\{f_ i \mid i < 2\} \cup F'\).
For \(0\le x<y\le 1\) with both \(x\) and \(y\) in
\(\Z[1/2]\) we let \(F_{[x,y]}\) denote the subgroup of \(F\)
consisting of those elements of \(F\) whose support is contained in
\([x,y]\).  It is standard that \(F_{[x,y]}\cong F\) and that if
\(x<w<y<z\), then \(F_{[x,y]}\cup F_{[w,z]}\) generates
\(F_{[x,z]}\).

Next we claim that the kernel of \(\pi_G\) is \(G' = F'\).
It is trivial that \(F'\subseteq G\) and since \(F'\) is simple and not abelian
it follows that we also have \(F'\subseteq G'\).
On the other hand, since \(G\subseteq F\) we have \(G'\subseteq F'\).
Since the image of \(\pi_G\) is abelian, we have
\(G'\) is contained in the kernel of \(\pi_G\).
Lastly, \(\ker(\pi_G) \subseteq \ker(\pi) \subseteq F'\). 

\begin{claim} \label{BothGermsNontrivial}
If \(g \in G\) is such that neither coordinate of \(\pi (g)\)
is \(0\), then there is a finite subset \(T\) of \(G'\) with
\(G' \subseteq \gen{T \cup\{g\}}\).
\end{claim}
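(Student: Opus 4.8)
The plan is to exploit the fact, established just above, that $G' = F' = \ker\pi_F$, together with the standard description of $F'$ as the increasing union of the subgroups $F_{[2^{-k},1-2^{-k}]}$ (each a copy of $F$, hence two-generated), and the gluing principle recalled earlier: if $x<w<y<z$ then $F_{[x,y]}\cup F_{[w,z]}$ generates $F_{[x,z]}$. The strategy is to pick a bounded number of copies of $F$ sitting compactly inside $(0,1)$, each generated by two elements of $F'$, and to show that conjugating their generators by powers of $g$ produces copies $F_{[x,y]}$ that cover arbitrarily small punctured neighborhoods of $0$ and of $1$; patching these together with the gluing principle then recovers all of $F'$, so the union $T$ of the generating pairs of the finitely many copies used is the desired finite set.

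First I would normalize. Since $g\in G\le F$, the entries $\alpha,\beta$ of $\pi(g) = (\log_2 g'(0),\log_2 g'(1))$ are integers, and by hypothesis $\alpha\ne 0\ne\beta$. Replacing $g$ by $g^{-1}$ — which changes neither $\langle T\cup\{g\}\rangle$ nor the fact $G'=F'$, and sends $(\alpha,\beta)\mapsto(-\alpha,-\beta)$ — I may assume $\alpha\ge 1$. As $g$ is piecewise linear near $0$ with initial slope $2^\alpha$, there is a dyadic $c\in(0,1/4)$ with $tg = 2^\alpha t$ for $t\in[0,c]$; in particular $g^n$ is multiplication by $2^{n\alpha}$ on a neighborhood of $0$ for every $n\in\Z$. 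Symmetrically fix a dyadic $d\in(0,1/4)$ on which $g$ is affine near $1$ of slope $2^\beta$. Finally, since $g\in F$ carries dyadic rationals to dyadic rationals, $(F_{[x,y]})^{g^n} = F_{[xg^n,\,yg^n]}$ whenever $0<x<y<1$ are dyadic and $0<xg^n<yg^n<1$.

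Now take the seed copy $F_{[2^{-\alpha-1}c,\,c]}\cong F$; its two generators lie in $F'$. For $n\ge 0$ its conjugate by $g^{-n}$ is $F_{[2^{-(n+1)\alpha-1}c,\;2^{-n\alpha}c]}$, and consecutive members of this family overlap (the right endpoint of the $(n{+}1)$-st tile lies strictly inside the $n$-th tile, using $\alpha\ge 1$). Iterating the gluing principle, the subgroup generated by $g$ and the two generators of the seed copy contains $F_{[\eta,\,c]}$ for every dyadic $\eta\in(0,c)$. If $\beta\ge 1$, the same power $g^{-n}$ simultaneously pushes a neighborhood of $1$ outward toward $1$, so already $\bigcup_{n\ge 0}[\epsilon,1-\epsilon]g^{-n} = (0,1)$ for a suitable dyadic $\epsilon$, and a single copy $F_{[\epsilon,1-\epsilon]}$ suffices with no gluing at all. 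If $\beta\le -1$, one runs the symmetric construction at the right endpoint using the powers $g^n$ (which now contract a neighborhood of $1$ toward $1$), obtaining copies $F_{[\eta',\,1]}$-type near $1$ for all small dyadic $\eta'$; one additional fixed middle copy $F_{[c/2,\,1-d/2]}$ overlaps both clouds, and a last round of gluing yields $F_{[\eta,\,1-\eta']}$ for all sufficiently small dyadic $\eta,\eta'>0$. In every case, letting $T$ be the finite (at most six–element) union of the generating pairs of the copies of $F$ used, $G' = F' = \bigcup_{\eta,\eta'}F_{[\eta,1-\eta']}\subseteq\langle T\cup\{g\}\rangle$.

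The only genuine work is the bookkeeping that the conjugated tiles actually overlap — so that the gluing principle applies and so that their supports exhaust a full punctured neighborhood of the endpoint rather than a gappy, Cantor-like set. This is precisely why the seed interval is taken to be $[2^{-\alpha-1}c,c]$ rather than $[2^{-\alpha}c,c]$, whose $g^{-n}$–translates would merely abut. The case $\beta\le -1$ is slightly more involved than $\beta\ge 1$ because the two ends are reached by opposite powers of $g$ and must be stitched across the middle, but the extra middle copy reduces this to the same gluing argument; no estimate needed is more than elementary arithmetic with powers of $2$.
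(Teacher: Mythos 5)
Your proof is correct and follows essentially the same route as the paper's: both conjugate a two-generated, compactly supported copy \(F_{[x,y]}\cong F\) of \(F'\) by integral powers of \(g\) and invoke the gluing principle to recover all of \(F'=G'\). The paper does this more tersely with a single seed copy whose translates under powers of \(g\) cover \((0,1)\), whereas you carry out the same covering argument explicitly in linear coordinates near the endpoints, with up to three seed copies and a case split on the sign of \(\beta\).
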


\begin{proof}
Notice that there is an
\(\epsilon>0\) so that \(g\) moves all points in \((0,\epsilon)\)
and \((1-\epsilon,1)\).  It follows that for some \(x<y\) in
\(\Z[1/2]\) the images of \((x,y)\) under integral powers of \(g\)
cover \((0,1)\).
Hence the conjugates of \(F_{[x,y]}\) under integral powers of \(g\) generate \(F'=G'\).
The claim now follows from the fact that
\(F_{[x,y]} \cong F\) is generated by two elements.
\end{proof}

In what follows, we will refer to the closure of the support of an \(f\in\HomeoI\)
as the \emph{extended support of \(f\)}.

\begin{claim} \label{EquivCommute}
Suppose \(g \in G \setminus G'\) has connected extended support and \(h \in G\).
If \(g\) commutes with \(g^h\) then \(g\) commutes with \(h\).
\end{claim}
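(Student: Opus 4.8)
The plan is to prove the equivalent statement $g^h = g$. Set $z := g^h g^{-1}$. Since $\pi$ is a homomorphism into an abelian group, $\pi(g^h) = \pi(g)$, so $\pi(z) = 0$ and hence $z$ lies in $\ker \pi_G = G' = F'$; in particular $z$ is PL and has trivial germs at $0$ and at $1$. Writing $g^h = zg$ in the identity $g\cdot g^h = g^h\cdot g$ and cancelling a $g$ on the right shows that $z$ commutes with $g$. Everything then reduces to showing $z = 1$.

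First I would match up supports. Since $g$ is PL, $\supt(g)$ is a finite union of orbitals, and connectedness of the extended support of $g$ makes these orbitals $(\ell,a_1),(a_1,a_2),\dots,(a_{n-1},r)$ with $\operatorname{Fix}(g)=\{\ell,a_1,\dots,a_{n-1},r\}\cup\bigl([0,1]\setminus[\ell,r]\bigr)$; because $g\notin G'=\ker\pi_G$ we have $\pi(g)\ne 0$, and (replacing $g$ by its reflection under $t\mapsto 1-t$ if necessary) we may assume $g$ is non-identity near $0$, so $\ell = 0$. As $g$ and $g^h$ commute, $g^h$ preserves $\supt(g)$ and $g$ preserves $\supt(g^h)=\supt(g)h$; since both maps are order-preserving they must preserve each orbital individually, whence $g^h$ fixes $\{0,a_1,\dots,a_{n-1},r\}$ while $g$ fixes $\{0,a_1h,\dots,a_{n-1}h,rh\}$. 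A brief counting argument applied to the order-preserving map $t\mapsto th$ on this finite set — using that $g$ has exactly $n-1$ interior fixed points strictly below $r$ — then forces $rh=r$ and $a_ih=a_i$ for every $i$. Thus $h$ fixes every transition point of $g$, preserves each orbital of $g$, and $\supt(g^h)=\supt(g)$; in particular $\supt(z)\subseteq\supt(g^h)\cup\supt(g)=\supt(g)$.

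It then suffices to show $z$ is the identity on each orbital $(p,q)$ of $g$. Such a $(p,q)$ is preserved by $z$ (which is order-preserving and commutes with $g$), while $g\restriction(p,q)$ is fixed-point-free. Since $h$ fixes $p$, the germ homomorphism at $p$ gives $\gamma_p^+(g^h)=\gamma_p^+(g)$, hence $\gamma_p^+(z)=1$ (this holds also when $p=0$ because $z\in\ker\pi_G$), so $z$, being PL and fixing $p$, is the identity on some interval $[p,p+\epsilon)$; the same at $q$ gives the identity on $(q-\epsilon,q]$. Hence the components of $\supt(z)\cap(p,q)$ — finitely many, as $z$ is PL — all have closures inside $(p,q)$, and $g$ permutes them; but $g$ is fixed-point-free on $(p,q)$, so the forward $g$-orbit of the left endpoint of any such component is strictly monotone and therefore infinite, which is impossible with finitely many components. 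So $\supt(z)\cap(p,q)=\emptyset$ for every orbital, and with $\supt(z)\subseteq\supt(g)$ this yields $z=1$, i.e.\ $g^h=g$ and $[g,h]=1$.

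The main obstacle is the support-matching step of the second paragraph: extracting from the single hypothesis $[g,g^h]=1$ (together with $g\notin G'$ and connected extended support) the conclusion that $h$ fixes all transition points of $g$. Once that is secured, the remainder is the familiar ``there is no room to permute finitely many orbitals freely'' argument, here made to work at the boundary orbitals by the vanishing of $\pi(z)$ and at all interior transition points by the vanishing of the corresponding germs of $z$.
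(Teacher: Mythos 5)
Your argument is correct, and for its first two stages it travels the same road as the paper's proof: both first extract from \([g,g^h]=1\) the fact that \(h\) fixes every transition point of \(g\) (the paper compresses your ``counting argument'' into the single remark that otherwise some fixed point of one of \(g,g^h\) would be moved by the other; your version --- playing the finitely many interior fixed points of \(g\) off against the same constraint applied to \(h^{-1}\) --- is the honest way to fill that sentence in), and both then use the chain rule at the now-shared transition points to see that \(g\) and \(g^h\) agree near the endpoints of each orbital. Where you genuinely diverge is the finish. The paper closes by citing Brin--Squier: commuting elements of \(\PLoI\) over a common orbital admit a common root there, so agreement near the endpoints forces \(g\restriction J=g^h\restriction J\). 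You instead argue directly with \(z=g^hg^{-1}\): it is trivial near the endpoints of each orbital \((p,q)\), commutes with \(g\), and so \(g\restriction(p,q)\), being fixed-point free, would permute the finitely many components of \(\supt(z)\cap(p,q)\) along an infinite forward orbit. This is precisely the displacement argument underlying the cited Brin--Squier result, so your route buys self-containedness at the cost of a few extra lines; the paper's buys brevity at the cost of a reference.

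One step needs restating. ``The germ homomorphism at \(p\) gives \(\gamma_p^+(g^h)=\gamma_p^+(g)\)'' is not literally what the homomorphism property gives: it gives \(\gamma_p^+(g^h)=\gamma_p^+(g)^{\gamma_p^+(h)}\), and germ groups of homeomorphisms are not abelian. What rescues the equality --- and what the paper invokes explicitly --- is the chain rule: for piecewise linear maps the one-sided germ at a fixed point is determined by the one-sided slope, and slopes multiply commutatively, so conjugation by \(\gamma_p^+(h)\) acts trivially. Since all of \(G\), \(H\), \(z\) live in \(\PLoI\) here, the step is sound once phrased this way.
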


\begin{proof}
We know that 0 or 1 is in the extended support of \(g\) and both 0 and 1 are fixed by \(h\). 
Since \(g\) has only finitely many bumps, it has finitely many transition points.
If any of these are moved by \(h\), then at least one fixed point of
\(g^h\) is moved by \(g\) or at least one fixed point of \(g\) is
moved by \(g^h\).
In either case this implies that \(g\) does not commute with \(g^h\).
If no transition point of \(g\) is moved by \(h\), then the orbitals of \(g^h\) are
those of \(g\).
It follows from the chain rule that on each
orbital \(J\) of \(g\), \(g\) and \(g^h\) agree on a small neighborhood of the
endpoints of \(J\).
By 
\cite[\S 4]{MR1855112} two elements of \(\PLoI\) commute over a
common orbital of support only if they admit a common root over that orbital.
In particular, if \(g \restriction J\) and \(g^h \restriction J\)
agree in a neighborhood of the endpoints of \(J\)
and they commute, then they must be equal.  Hence \(h\) commutes with \(g\) over each orbital of support of \(g\), so \(h\) and \(g\) commute.
\end{proof}

We now turn our attention to our representation of \(G\) as
a subgroup \(H=\gen{X}\) of \(\HomeoI\) where \(X\) is geometrically proper.
If \(H\) has more than one component of support, then the
restriction to each is a quotient of \(H\).
Since no non-trivial element of \(F\) commutes with every element of \(F'\) and since \(F'=G'\) is simple,
it follows that every
nontrivial normal subgroup of \(G\) contains \(G'\).
Consequently every proper quotient of \(H\cong G\) is abelian.
Thus if no restriction were an isomorphism, \(H\) would be abelian (which is absurd).
We can thus replace \(H\) by its restriction to a component of its support on which the
restriction is faithful, and further, 
we can conjugate by a homeomorphism of \(\R\)
so that the support is \((0,1)\).  Note that this new embedding of \(G\) in \(\HomeoI\) is geometrically proper if the original embedding is geometrically proper.

Let \(\pi_0\) denote the restriction of the germ homomorphism \(\gamma_0^+\) to \(H\) and 
\(\pi_1\) denote the restriction of \(\gamma_1^-\) to \(H\);
define \(\pi_H(h) := (\pi_0(h),\pi_1(h))\).
Observe that since \(X\) is geometrically proper we have that
for each \(i < 2\), there is at most one
element of \(X\) which is not in the kernel of \(\pi_i\).
Observe that this implies the image of \(\pi_H\) is abelian and hence
\(H' \subseteq \ker (\pi_H)\).

\begin{claim} \label{NonCyclicImage}
The image of \(\pi_H\) is not cyclic.
\end{claim}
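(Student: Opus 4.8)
The plan is to argue by contradiction, supposing that the image of $\pi_H$ is cyclic. I would first record a general fact: the germ group of orientation preserving homeomorphisms at an endpoint is torsion-free. Indeed, a homeomorphism which is not the identity on any one-sided neighbourhood of the endpoint has an orbital abutting that endpoint, and on that orbital every one of its powers moves points consistently in a fixed direction, so no nontrivial power is the identity germ. Consequently the image of $\pi_H$, being finitely generated (as $H$ is) and abelian (as already observed) and torsion-free, is a finitely generated torsion-free abelian group; being cyclic, it is either trivial or infinite cyclic.

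Next I would eliminate the easy possibilities. If $\pi_i \restriction H$ is trivial for some $i \in \{0,1\}$, then every element of the finite set $X$ is the identity on a one-sided neighbourhood of that endpoint, hence there is a common such neighbourhood on which every word in $X$ is the identity, so $\supt(H)$ omits that neighbourhood — contradicting $\supt(H) = (0,1)$. Thus $\pi_0 \restriction H$ and $\pi_1 \restriction H$ are both nontrivial, and by geometric properness there is exactly one $x_0 \in X$ with nontrivial germ at $0$ and exactly one $x_1 \in X$ with nontrivial germ at $1$. If $x_0 \ne x_1$, then $\pi_H(x_0) = (\alpha,0)$ and $\pi_H(x_1) = (0,\beta)$ with $\alpha,\beta$ nontrivial while every other generator maps to $(0,0)$; by torsion-freeness $\langle\alpha\rangle$ and $\langle\beta\rangle$ are infinite cyclic, so the image of $\pi_H$ is $\langle\alpha\rangle\times\langle\beta\rangle\cong\Z\times\Z$, which is not cyclic. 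This forces $x_0 = x_1 =: x$, so that, writing $Y := X\setminus\{x\}$, every element of $Y$ is the identity near both endpoints, $\ker\pi_H$ is the normal closure of $Y$ in $H$, this is a proper normal subgroup containing $H' = [H,H]$, and $H/\ker\pi_H \cong \Z$ is generated by the class of $x$ (the identifications $\ker\pi_0 = \ker\pi_1 = \ker\pi_H$ being routine, since all three have infinite cyclic quotient of $H$ generated by the class of $x$).

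It remains to rule out this last configuration, which is the heart of the matter and where the hypothesis that $G$ is a finite index subgroup of $F$ enters essentially. Transporting through $\phi^{-1}$, the group $G$ would be generated by $y := \phi^{-1}(x)$ together with the normal subgroup $N := \phi^{-1}(\ker\pi_H)$, where $G' = F' \subseteq N$, where $N$ is the normal closure of $\phi^{-1}(Y)$, and where $G/N\cong\Z$ is generated by the image of $y$. Using the description of $G$ as $\pi^{-1}(K)$ with $K$ finite index in $\Z\times\Z$ from \cite{bleak+wassink}, we have $N = \pi^{-1}(L)$ for a rank one direct summand $L$ of $K$. I would then derive a contradiction from the rigidity provided by Claims \ref{BothGermsNontrivial} and \ref{EquivCommute}: an element of $N$ whose $\pi$-image has both coordinates nonzero generates $F'$ together with a finite subset of $F'$, whereas an element of $G\setminus G'$ of connected extended support cannot be conjugated off its own support (if $g^h$ has support disjoint from that of $g$ it commutes with $g$, so by Claim \ref{EquivCommute} $g$ commutes with $h$, whence $\supt(g^h) = \supt(g)$, a contradiction); these facts are to be combined with the fact that the generators $\phi^{-1}(Y)$ are ``interior'' and that $L$ has corank one to reach an impossibility.

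The main obstacle is exactly this final step: showing that no normal subgroup $N$ of $G$ can simultaneously contain $F'$, be the normal closure of a finite ``interior'' subset coming from a geometrically proper generating set, and have corank one. Everything before it is routine.
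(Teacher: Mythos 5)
Your reductions are correct as far as they go --- torsion-freeness of the germ groups, nontriviality of both $\pi_0$ and $\pi_1$, and the observation that cyclicity would force the two ``boundary'' generators of $X$ to coincide --- but the proof stops exactly where the content begins, and you say so yourself. The missing step is not a delicate normal-closure computation about your $N=\pi^{-1}(L)$, and it does not use Claim \ref{BothGermsNontrivial} at all; it is a short argument that plays the two representations of the group against each other. Since $G/G'\cong K\cong\Z\times\Z$ and $\phi$ carries $G'=\ker\pi_G$ into $\ker\pi_H$, cyclicity of the image of $\pi_H$ forces the induced surjection $G/G'\to H/\ker(\pi_H)$ to have nontrivial kernel, so one may pick $g:=f_0^mf_1^n\in G\setminus G'$ with $\phi(g)\in\ker(\pi_H)$. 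Such a $g$ equals $a^jb^k$ with $(j,k)\ne(0,0)$ and hence has connected extended support, which is exactly the hypothesis of Claim \ref{EquivCommute}. Now switch sides: $\phi(g)\in\ker(\pi_H)$ means $\supt(\phi(g))\subseteq[x,y]$ for some $0<x<y<1$, and since $(0,1)$ is a single orbital of $H$ there is an $h\in H$ with $xh>y$. Then $\phi(g)^h$ has support disjoint from that of $\phi(g)$, so $\phi(g)$ commutes with $\phi(g)^h$ but not with $h$. Pulling this back through $\phi^{-1}$ exhibits $g$ commuting with $g^{\phi^{-1}(h)}$ but not with $\phi^{-1}(h)$, contradicting Claim \ref{EquivCommute}.

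The reason your plan stalls is that you are hunting for the conjugator on the $G$-side, where it cannot exist: no element of $F$ can move the support of a $g$ whose extended support touches $0$ or $1$ off itself, so the rigidity statement in your parenthetical never gets violated by anything you can see inside $F$. The point of the argument is that the relations $[g,g^{h'}]=1$ and $[g,h']\ne1$ are purely algebraic and therefore transport through the abstract isomorphism $\phi$, while the geometric configuration that produces them is visible only in the $H$-representation, where $\phi(g)$ has compactly contained support precisely because it lies in $\ker(\pi_H)$. A minor separate quibble: an element with nontrivial germ at an endpoint need not have an orbital abutting that endpoint (its orbitals may merely accumulate there), but torsion-freeness of the germ group still holds because an orientation-preserving homeomorphism satisfies $tf>t\Rightarrow tf^k>t$ for every $k\ge1$.
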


\begin{proof}
Recall that \(G' = \ker (\pi_G)\) and \(H' \subseteq \ker(\pi_H)\).
In particular, \(\phi\) induces a well defined homomorphism
from \(G/G'\) to \(H/\ker (\pi_H)\).
If the image of \(\pi_H\) were cyclic, then this homomorphism would
have a nontrivial kernel. 

Pick \(g:=f_0^m f_1^n \in G\setminus G'\) in the kernel of \(\pi_H\).
The element \(g\) will have connected extended support.
As \(\phi(g)\in\ker(\pi_H)\) we must have the support of \(\phi(g)\) is contained in \([x,y]\) for
some \(0 < x < y < 1\).  As the support of \(H\) is \((0,1)\) there is an \(h \in H\) so that \(xh>y\), and for this \(h\) we have 
\(\phi(g)\) commutes with \(\phi(g)^h\), but not with \(h\).
Now, \(g\) commutes with \(\phi^{-1} (\phi(g)^h)\), but not with \(\phi^{-1}(h)\), however, \(\phi^{-1}(\phi(g)^h) = g^{\phi^{-1}(h)}\),
contradicting Claim \ref{EquivCommute}.
\end{proof}

At this point we know that, by the geometric properness of \(X\), for each coordinate \(i\in\{0,1\}\)
there is a unique element \(h_i\in X\) so that
\(i\) is in the extended support of \(h_i\) and so that \(h_0\ne h_1\).
The fact that \(h_0\) and \(h_1\) are distinct and unique implies that
the image of \(\pi_H\) is the product of the images of \(\pi_0\) and \(\pi_1\).

\begin{claim}\label{claim_noFiniteGenSet}
For each element \(h\) of \(X\) whose extended support contains 0 or 1,
there is no finite subset \(T\) of the kernel of \(\pi_{H}\) so that
\(\ker(\pi_{H}) \subseteq \gen{T\cup \{h\}}\).
\end{claim}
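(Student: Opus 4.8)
The plan is to prove the claim directly inside \(H=\gen X\), using the two facts that have already been arranged: the support of \(H\) is \((0,1)\) and \(X\) is (still) geometrically proper. Since the extended support of \(h\) contains \(0\) or \(1\), after possibly conjugating by the order‑reversing involution \(t\mapsto 1-t\) (which preserves geometric properness, swaps \(\pi_0\) and \(\pi_1\), and keeps the support equal to \((0,1)\)) I may assume it contains \(0\). By geometric properness \(h\) is the unique element of \(X\) whose extended support contains \(0\), so \(h=h_0\); and since \(h_0\neq h_1\), the extended support of \(h_0\) does \emph{not} contain \(1\), i.e. \(h_0\) is the identity on some half–open neighbourhood \((y_0,1]\) of \(1\).

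The substantive preliminary step will be to show that \(\sup\supt(H')=1\). Note first that \(H\cong G\) is nonabelian, so \(H'\neq\{1\}\) and \(\supt(H')\) is a nonempty open subset of \(\supt(H)=(0,1)\). Suppose \(a:=\sup\supt(H')\) were \(<1\); then \(a\in(0,1)\). Since \(H'\trianglelefteq H\), conjugation by any \(h\in H\) permutes \(H'\) and carries each support \(\supt(g)\) to \(\supt(g)h\), so \(\supt(H')\) — and hence the compact set \(\overline{\supt(H')}\) — is invariant under every homeomorphism in \(H\). As \(h\) is order preserving, it must fix the maximum \(a\) of \(\overline{\supt(H')}\). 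Thus \(a\) is a common fixed point of \(H\) lying in \((0,1)\), contradicting \(\supt(H)=(0,1)\). Hence \(\sup\supt(H')=1\); equivalently, for every \(y<1\) some element of \(H'\) has support meeting \((y,1)\). (Symmetrically \(\inf\supt(H')=0\), which is what the \(h=h_1\) case uses.)

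Granting this, the claim follows quickly. Let \(T\subseteq\ker(\pi_H)\) be finite and set \(L:=\gen{T\cup\{h_0\}}\). Every \(t\in T\) lies in \(\ker(\pi_H)\subseteq\ker(\pi_1)\) and so fixes a neighbourhood of \(1\); combining these finitely many neighbourhoods with \((y_0,1]\) produces a single \(y<1\) on which every generator of \(L\) is the identity. Since a generator of \(L\) that fixes \((y,1]\) pointwise maps \([0,y]\) to itself, the interval \((y,1]\) is \(L\)-invariant and fixed pointwise by all of \(L\), so \(\supt(g)\subseteq[0,y]\) for every \(g\in L\). By the preliminary step there is \(g\in H'\) with \(\supt(g)\not\subseteq[0,y]\), so \(g\notin L\); and \(g\in H'\subseteq\ker(\pi_H)\) because the image of \(\pi_H\) is abelian. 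Hence \(\ker(\pi_H)\not\subseteq L\), as required, and the same argument at the endpoint \(0\) handles \(h=h_1\).

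The only place where something could conceivably go wrong is the preliminary step: one might worry that \(\ker(\pi_H)\), or even \(H'\), could have support staying uniformly away from an endpoint of \((0,1)\), which would make \(\ker(\pi_H)\) generable over \(h_0\). The point of the paragraph above is that this is impossible for a purely dynamical reason — a normal subgroup whose support is bounded away from an endpoint forces a common interior fixed point for \(H\), which cannot happen once the support of \(H\) has been normalized to \((0,1)\). Everything else (locating \(h_0\) and \(h_1\) via geometric properness, and the observation that \(\ker(\pi_H)\) kills a neighbourhood of each endpoint) is bookkeeping already in place earlier in the proof of Theorem \ref{NoGeoProperGen}.
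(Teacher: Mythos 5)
Your proof is correct and follows essentially the same route as the paper's: reduce to \(h=h_0\) (which fixes a neighbourhood of \(1\)), note that a finite \(T\subseteq\ker(\pi_H)\) fixes a common neighbourhood of \(1\), and exhibit an element of \(H'\subseteq\ker(\pi_H)\) supported arbitrarily close to \(1\). The only difference is that you spell out a justification (normality of \(H'\) makes \(\overline{\supt(H')}\) invariant under \(H\), so \(\sup\supt(H')<1\) would give \(H\) an interior fixed point, contradicting \(\supt(H)=(0,1)\)) for the step the paper asserts in one line, namely that \(\ker(\pi_H)\) moves points arbitrarily close to both endpoints.
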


\begin{proof}
Let \(h\) be given and suppose without loss of generality that \(0\) is
in the extended support of \(h\).
As noted above, \(1\) can not be in the extended support of \(h\). 
Since \(\gen{X}'\subseteq \ker(\pi_{H})\),
there are nontrivial elements in \(\ker(\pi_{H})\).
Because the orbital of \(H\) is \((0,1)\),
there are points arbitrarily close to 0 and 1 moved by elements of
\(\ker (\pi_{H})\).
It follows that if \(T\subseteq
\ker(\pi_{H})\) is finite, then there is a neighborhood of \(1\)
fixed by all elements of \(\gen{T\cup \{h\}}\) and thus
\(\gen{T\cup \{h\}}\) cannot contain all of \(\ker(\pi_{H})\).
\end{proof}

In order to finish the proof, it suffices to show that \(\pi_G\left(\phi^{-1}(h_0)\right)\) and
\(\pi_G\left(\phi^{-1}(h_1)\right)\) generate the image of \(\pi_G\) and that
each have exactly one (necessarily different) nonzero coordinate. 
Since, by the proof of Claim \ref{NonCyclicImage},
\(\phi\) induces an isomorphism \(G/G' \cong H/\ker (\pi_H)\), it follows that
\(\{\pi_G(\phi^{-1}(h_i)) \mid i < 2\}\) must generate \(G/G'\).
On the other hand, by Claims \ref{BothGermsNontrivial} and \ref{claim_noFiniteGenSet},
it must be that one coordinate
of \(\pi_G(\phi^{-1}(h_i))\) must be 0 for each \(i \in \{0,1\}\).
This shows that these two elements, which generate the group \(K\), together make a set of the form \(\left\{(p,0),(0,q)\right\}\) for some non-zero \(p\) and \(q\), and therefore  \(G \cong F\).
\end{proof}

\begin{remark} 
The group \(E=\{(p,q)\in\Z\times\Z\mid p+q \equiv0 \mod2\}\)
is a subgroup of \(\Z \times \Z\) which is not of the form \(P \times Q\) and hence
\(\pi^{-1}_F (E)\) is a finite index subgroup of \(F\) which is not isomorphic to \(F\).
In particular, there are finite index subgroups of \(F\) which do not admit
geometrically proper generating sets.
\end{remark}

\section{Abstract Ping-Pong Systems}

\label{AbstractPingPongSec}

In this section we will abstract the analysis of geometrically
fast systems of bumps in previous sections to
the setting of permutations of a set \(S\).
(By \emph{permutation} of \(S\) we simply mean a bijection from \(S\) to \(S\).)
Our goal will be to state the analog of Theorem \ref{CombToIso} and its consequences.
The proofs are an exercise for the reader.

Suppose now that \(A\) is a collection of permutations of a set \(S\) such that
\(A \cap A^{-1} = \emptyset\).
A \emph{ping-pong system} on \(A\) is an assignment \(a \mapsto \dest(a)\) of sets
to each element of \(A^\pm\) such that whenever \(a\) and \(b\) are in \(A^\pm\) and \(s \in S\):
\begin{itemize}

\item \label{basic_Dp}
\(\dest(a) \subseteq \supt(a)\) and if \(s \in \supt(a)\), then
there is an integer \(k\) such that \(s a^k \in \dest (a)\);

\item \label{ping-pong_cond}
if \(s \in \supt(a)\), then \(s a  \in \dest(a)\) if and only if \(s \not \in \src(a) : = \dest(a^{-1})\);

\item if \(a \ne b\), then \(\dest(a) \cap \dest(b)\) is empty;

\item if \(\dest(a) \cap \supt(b) \ne \emptyset\), then \(\dest(a) \subseteq \supt(b)\).

\end{itemize}
\noindent
The following lemma summarizes some immediate consequences of this definition.
\begin{lemma}
Given a set \(S\) and  a collection \(A\) of permutations of \(S\)
equipped with a ping pong system, the following are true:
\begin{itemize}

\item if \(a \in A\) and \(s \in \supt (a)\), then there is a unique \(k \in \Z\) such that:
\[
s a^k \in \supt(a) \setminus (\src(a) \cup \dest(a))\]

\item if \(a \in A\), then \(\dest(a) a \subseteq \dest(a)\);

\end{itemize}
In particular, all elements of \(A\) have infinite order.
\end{lemma}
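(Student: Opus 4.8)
The plan is to prove the two bulleted assertions in the order stated, re-using the second inside the first, and then read off the infinite-order conclusion.

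First I would dispose of the second bullet, and in fact for every $a \in A^\pm$, since it is needed for the first. Let $t \in \dest(a)$. By the first ping-pong axiom $t \in \supt(a)$, and by the third axiom $\dest(a) \cap \dest(a^{-1}) = \emptyset$ — here note $a \ne a^{-1}$, since $a = a^{-1}$ would put $a \in A \cap A^{-1}$ — so $t \notin \src(a) = \dest(a^{-1})$. The ping-pong condition then gives $ta \in \dest(a)$. I would also record the routine facts that $\supt(a) = \supt(a^{-1})$ and that $\supt(a)$ is invariant under $a$ (if $ua \ne u$ but $(ua)a = ua$, then applying $a^{-1}$ gives $u = ua$, a contradiction); hence for $s \in \supt(a)$ the entire two-sided orbit $(sa^k \mid k \in \Z)$ lies in $\supt(a)$.

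For the first bullet, fix $a \in A$, $s \in \supt(a)$, and set $R := \supt(a) \setminus (\src(a) \cup \dest(a))$; since $\src(a)$ and $\dest(a)$ are disjoint we have $\supt(a) = \src(a) \sqcup R \sqcup \dest(a)$. For \emph{uniqueness}, suppose $sa^k, sa^\ell \in R$ with $k < \ell$. As $sa^k \in R \subseteq \supt(a) \setminus \src(a)$, the ping-pong condition gives $sa^{k+1} \in \dest(a)$, and then the second bullet gives $sa^\ell = (sa^{k+1})a^{\ell-k-1} \in \dest(a)$, contradicting $sa^\ell \in R$. For \emph{existence}, consider $D := \{k \in \Z \mid sa^k \in \dest(a)\}$ and $E := \{k \in \Z \mid sa^k \in \src(a)\}$. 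The first axiom (applied to $a$, resp.\ $a^{-1}$) makes $D$, resp.\ $E$, nonempty; the second bullet (applied to $a$, resp.\ $a^{-1}$) makes $D$ upward-closed and $E$ downward-closed in $\Z$; and $D \cap E = \emptyset$ since $\src(a) \cap \dest(a) = \emptyset$. Because $E \ne \emptyset$ forces $D \ne \Z$ and $D \ne \emptyset$ forces $E \ne \Z$, it follows that $D = [q,\infty)\cap\Z$ and $E = (-\infty,p]\cap\Z$ for integers $p < q$. Since $sa^p \in \src(a) \subseteq \supt(a)$, the ping-pong condition forces $sa^{p+1} \notin \dest(a)$, so $p+1 \notin D$ and hence $q \ge p+2$; any integer $k$ with $p < k < q$ then satisfies $sa^k \in \supt(a) \setminus (\src(a) \cup \dest(a)) = R$, using support-invariance for $sa^k \in \supt(a)$.

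Finally, $a \ne \id$ (else $\id \in A \cap A^{-1}$), so $\supt(a) \ne \emptyset$; picking $s$ there, if $a^N = \id$ for some $N \ge 1$ then $k \mapsto sa^k$ would be $N$-periodic, so $\{k \mid sa^k \in R\}$ would be a union of residue classes mod $N$ and hence empty or infinite, contradicting that the first bullet makes it a singleton. The only step requiring genuine care is the existence half of the first bullet — ruling out that the orbit ``skips over'' $R$ — and this is exactly where the ping-pong condition (not merely disjointness of destinations) is used, via the fact that $a$ carries each point of $R$ into $\dest(a)$; the rest is bookkeeping.
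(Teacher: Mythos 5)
Your proof is correct and complete; the paper itself gives no argument for this lemma (it is stated as an ``immediate consequence'' of the axioms, with proofs in that section left to the reader), so there is nothing to compare against. Your route --- establishing $\dest(a)a\subseteq\dest(a)$ for all $a\in A^\pm$ first, then using upward/downward closure of the sets $D$ and $E$ together with the ping-pong condition at $sa^p$ to locate the unique orbit point outside the feet --- is exactly the intended bookkeeping.
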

As remarked in Section \ref{FastBumpsSec}, geometrically fast sets of bumps admit
a ping-pong system.
The meanings of \emph{source}, \emph{destination}, and \emph{locally reduced word}
all readily adapt to this new context.
Furthermore, the proofs of Lemmas \ref{LocRedBasics}--\ref{FellowTraveler3} given in Section \ref{PingPongSec}
use only the axiomatic properties of a ping-pong system and thus these lemmas 
are valid in the present context.
The next example is simplistic, but it will serve to illustrate a number of points in this section.

\begin{example} \label{PSL2Example}
View the real projective line \(\mathbf{P}\) as \(\R \cup \{\infty\}\) and \(\PSL_2(\Z)\) as a group of
fractional linear transformations of \(\mathbf{P}\).
The homeomorphisms \(\alpha\) and \(\beta\) of \(\mathbf{P}\) defined by
\[
t\alpha := t + 1 \qquad t \beta := \frac{t}{1-t}
\]
generate \(\PSL_2(\Z)\).
If we take \(A = \{\alpha^2,\beta^2\}\), then
\[
\src(\alpha^2) := (-\infty,-1)  \qquad \qquad \dest(\alpha^2) := [1,\infty)
\]
\[
\src(\beta^2) := (0,1) \qquad \qquad \dest(\beta^2) := [-1,0) 
\]
defines a ping-pong system.
It is well known that \(\gen{\alpha^2,\beta^2}\) is free;
in fact this is one of the classical applications of the Ping-Pong Lemma.
\end{example}

In order to better understand \(\gen{A}\) when \(A\) is a set of permutations admitting a
ping-pong system, it will be helpful to represent \(\gen{A}\) as a family of homeomorphisms
of a certain space \(K_A\).
This compact space can be thought of as a space of \emph{histories} in the sense
of Section \ref{PingPongSec}.  
If \(S\) is the underlying set which elements of \(A\) permute,
let \(M = M_A\) denote the collection of all sets of the form
\[
\tilde s: = \{a \in A \mid s \in \supt(a)\}
\]
where \(s \in S \setminus \bigcup \{\dest(a) \mid a \in A^\pm\}\).
Elements of \(M\) will play the same role as the 
initial markers of a geometrically fast collection of bumps.

\begin{example} \label{PSL2Marker}
Continuing with the Example \ref{PSL2Example}, \(M\) consists of two points:
\(\widetilde 0 = \{\alpha\}\) and \(\widetilde \infty = \{\beta\}\).
We can also restrict the action of \(\PSL_2(\Z)\) on \(\mathbf{P}\) to the irrationals.
In this case \(M\) is empty.
\end{example}

It will be convenient to define \(\dest(\tilde s) := \bigcap \{\supt(a) \setminus \src(a) \mid a \in \tilde s\}\) and
\(\supt(\tilde s) := \emptyset\).
Define \(K_A\) to be all \(\eta\) such that:
\begin{itemize}

\item \(\eta\) is a suffix closed family of finite 
strings in the alphabet \(A^\pm \cup M\);

\item if \(\str{ab}\) are consecutive symbols of an element of \(\eta\), then
\(\dest(a) \subseteq \supt(b) \setminus \src(b)\);

\item for each \(n\), there is at most one element of \(\eta\) of length \(n\);

\item if \(\str{w} \in \eta\) and \(\eta\) does not contain a symbol from \(M\),
then \(\str{w}\) is a proper suffix of an element of \(\eta\).

\end{itemize}
\noindent
The second condition implies that elements of \(\eta\) are freely reduced since if \(b = a^{-1}\),
then \(\src(b) = \dest(a)\).
Observe that if \(\str{w}\) is in \(\eta\), the only occurrence of an element of \(M\) in \(\str{w}\) must
be as the first symbol of \(\str{w}\) (and there need not be any occurrence of an element of \(M\) in \(\str{w}\)).

Notice that every \(\eta \in K_A\) has at least one element other than \(\varepsilon\) and that all elements
of \(\eta\) of positive length must have the same final symbol.
We define \(\dest(\eta) := \dest(a)\) where \(\str{a}\) is the final symbol of every
element of \(\eta\) other than \(\varepsilon\).
We topologize \(K_A\) by declaring that
\([\str{w}] : = \{\eta \in K_A \mid \str{w} \in \eta\}\) is closed and open.
Notice that if \(\eta\) is finite, it is an isolated point of \(K_A\).

\begin{prop}
\(K_A\) is a Hausdorff space and if \(A\) is finite, then \(K_A\) is compact.
\end{prop}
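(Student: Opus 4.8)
The plan is to realize $K_A$ as a subspace of a compact Hausdorff product space and then identify it there by closed conditions. Let $T$ denote the set of all finite strings over the alphabet $\Sigma := A^\pm \cup M$, and identify each $\eta \in K_A$ with its indicator function in $\{0,1\}^T$, so that $\eta$ becomes the point sending $\str{w}$ to $1$ exactly when $\str{w} \in \eta$. Under this identification the two families of subbasic sets defining the product topology on $\{0,1\}^T$, namely $\{f : f(\str{w}) = 1\}$ and $\{f : f(\str{w}) = 0\}$, restrict to $K_A$ as $[\str{w}]$ and $K_A \setminus [\str{w}]$ respectively; since the topology on $K_A$ is by definition the coarsest one making every $[\str{w}]$ clopen, it coincides with the subspace topology inherited from $\{0,1\}^T$. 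As $\{0,1\}^T$ is Hausdorff, so is $K_A$, and this already proves the first assertion with no restriction on $A$.

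Now suppose $A$ is finite. Then $A^\pm$ is finite and $M$, being a set of subsets of $A$, is finite, so $\Sigma$ is finite and there are only finitely many strings of each length. By Tychonoff's theorem $\{0,1\}^T$ is compact, so it suffices to prove that $K_A$ is closed in it. Regarding a point of $\{0,1\}^T$ as a subset $E \subseteq T$, I would first observe that the conditions ``$E$ is suffix-closed,'' ``$E$ contains no string having a consecutive pair $\str{ab}$ with $\dest(a) \not\subseteq \supt(b)\setminus\src(b)$,'' ``$E$ has at most one string of each length,'' and ``$\varepsilon \in E$'' (the last tacit in the word ``family,'' and equivalent to the stated fact that every element of $K_A$ has an element besides $\varepsilon$) are each an intersection of conditions that individually constrain only finitely many coordinates, so together they cut out a closed set $C \subseteq \{0,1\}^T$. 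It then remains to show that the maximality condition (the fourth bullet) is also closed relative to $C$; equivalently, that the set of $E \in C$ for which it fails is open in $C$.

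For this, note that for $E \in C$ the length-set $L(E) := \{|\str{w}| : \str{w} \in E\}$ is a downward closed subset of $\N$ containing $0$, and when $L(E)$ is finite $E$ is exactly the set of all suffixes of its unique longest string. Call $E$ \emph{$M$-free} if no string in $E$ involves a symbol from $M$. A short case check shows that the maximality condition fails for $E \in C$ precisely when $E$ is $M$-free and $L(E)$ is finite: if $E$ is not $M$-free the condition is vacuous; if $E$ is $M$-free with $L(E) = \N$ it holds, since the unique string of each length $n+1$ has the unique string of length $n$ as a proper suffix; and if $E$ is $M$-free with $L(E)$ finite it fails, the longest string of $E$ being a proper suffix of nothing in $E$. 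Finally, for such a ``bad'' $E$ with longest string $\str{w}$ of length $k$, the set
\[
U_E := \{ F : \str{w} \in F \text{ and } F \text{ contains no string of length } k+1 \}
\]
is clopen in $\{0,1\}^T$ — this is where finiteness of $\Sigma$ is used, to know there are only finitely many strings of length $k+1$ — and $U_E \cap C = \{E\}$, since any $F \in C$ with $\str{w} \in F$ and no string of length $k+1$ has $L(F) = \{0,\dots,k\}$ and hence equals the set of suffixes of its length-$k$ string, which is $\str{w}$. Thus every point of $C \setminus K_A$ is isolated in $C$, so $C \setminus K_A$ is open in $C$, $K_A$ is closed in $\{0,1\}^T$, and therefore $K_A$ is compact.

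The routine parts are the identification with the subspace topology and the closedness of the conditions in the first three bullets. The one point requiring care — and the place where a naive attempt is likely to stall — is the maximality condition: it is a $\forall\exists$ assertion rather than a conjunction of coordinate conditions, so its failure set is a priori only locally closed in $C$. The resolution is the observation above that the $E \in C$ failing it are precisely the sets of suffixes of an $M$-free string, and that such $E$ are isolated points of $C$; this uses the finiteness of $A$ essentially.
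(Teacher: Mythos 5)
Your proof is correct. The paper leaves this proposition (like the others in Section 10) as an exercise, so there is no argument to compare against, but what you have written is the natural route: identify $K_A$ with a subspace of $\{0,1\}^T$, note the topology generated by the sets $[\str{w}]$ and their complements is the subspace topology, and cut $K_A$ out of the compact space by closed conditions. The only non-routine point is exactly the one you isolate --- the fourth (maximality) condition is not coordinatewise closed --- and your resolution is right: within the set $C$ defined by the first three bullets together with $\varepsilon \in E$, the failures are precisely the finite $M$-free suffix-sets, each of which is isolated in $C$ by a clopen neighborhood prescribing finitely many coordinates (this is where finiteness of $A$, and hence of $M \subseteq \mathcal{P}(A)$ and of the set of strings of each length, is genuinely used). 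The Hausdorff claim needs no finiteness, as you note.
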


\noindent
Each \(a \in A^\pm\) defines a homeomorphism \(\hat a : K_A \to K_A\) by:
\[
\eta \hat a := 
\begin{cases}
\{\str{ua} \mid \str{u} \in \eta\} \cup \{\varepsilon\} & \textrm{ if } \dest(\eta) \subseteq \supt(a) \setminus \src(a) \\
\{\str{u} \mid \str{ua}^{-1} \in \eta\}  & \textrm{ if } \dest (\eta) = \src(\str{a}) \\
\eta & \textrm{ if } \dest(\eta) \cap \supt(\str{a}) = \emptyset
\end{cases}
\]
Thus \(\eta \hat a\) is obtained by 
appending 
\(\str{a}\) to the end of every element of \(\eta\),
collecting the local reductions, and possibly including \(\varepsilon\).
Set \(\hat A = \{\hat a: a \in A\}\).

We say that a ping-pong system on \(A\) is \emph{faithful} if
\(\Lambda_A : = \{\eta \in K_A : \eta \textrm{ is finite}\}\) is dense in \(K_A\) (i.e.
whenever \(\str{w}\) is in some \(\eta \in K_A\), there is a finite \(\eta' \in K_A\)
which has \(\str{w}\) as an element).

\begin{example} \label{NonfaithfulExample}
As noted above, if we restrict the elements of \(\PSL_2(\Z)\) to the irrationals,
then  \(M = \emptyset\) and in particular the system is not faithful.
On the other hand, 
\[
\dest(\alpha^4) := (2,\infty) \cap S \qquad \qquad \dest(\alpha^{-4}) := (-\infty,-2) \cap S
\]
\[
\dest(\beta^4) := (-1/2,0) \cap S \qquad \qquad \dest(\beta^{-4}) := (0,1/2) \cap S.
\]
defines a ping-pong system in which \(M\) contains a single element \(\{\alpha,\beta\}\).
\end{example}

\noindent
While not every ping-pong system is faithful,
the reader is invited to verify that if \(A\) admits a ping-pong system, then
\(\{a^2 \mid a \in A\}\) admits a faithful ping-pong system.

\begin{thm}
If \(A\) is a set of permutations which admits a ping-pong system, then
\(a \mapsto \hat a\) extends to an epimorphism of \(\gen{A}\) onto \(\gen{\hat A}\).
If the ping-pong system is faithful, then the epimorphism is an isomorphism.
\end{thm}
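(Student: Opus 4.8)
The plan is to present $\gen{\hat A}$ as a quotient of $\gen A$ by constructing a $\gen A$-equivariant map from the underlying set $S$ into $K_A$ with dense image, and then to use the faithfulness hypothesis, together with the ping-pong lemmas of Section~\ref{PingPongSec}, to show that this quotient map is injective.

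First I would check that $\widehat{a^{-1}}$ is the inverse of $\hat a$ for each $a\in A^\pm$, a direct verification against the three clauses defining $\hat a$ (appending $\str a$ and then $\str a^{-1}$, collecting local reductions, returns every history to itself). Hence $\str w\mapsto\hat{\str w}$, the composite of the $\hat a$ over the symbols of $\str w$, is a well defined homomorphism from the free group on $A$ into $\Homeo(K_A)$. Next I would carry over the notion of the history $\eta(x)$ of a point $x\in S$ from Section~\ref{PingPongSec} and record two facts. (i) The map $x\mapsto\eta(x)$ is $\gen A$-equivariant, $\eta(xa)=\eta(x)\hat a$ for $a\in A^\pm$; this is essentially the design of $\hat a$, and follows from Lemmas~\ref{LocRedBasics} and~\ref{Collision}. (ii) The image $\widehat S=\{\eta(x)\mid x\in S\}$ is dense in $K_A$: if $\str w$ occurs in some history then $\str w$ is threaded, so one may choose a point $x$ in $J(\str w)$ (or, when $\str w$ begins with a symbol $\tilde t\in M_A$, a point witnessing $\tilde t$), and then $\str w\in\eta(xw)$, so the basic clopen set $[\str w]$ meets $\widehat S$. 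Granting (i) and (ii): if $w=\id$ in $\gen A$, then $\hat{\str w}$ fixes the dense set $\widehat S$ pointwise and, $K_A$ being Hausdorff and $\hat{\str w}$ continuous, $\hat{\str w}=\id_{K_A}$. Thus $\str w\mapsto\hat{\str w}$ factors through $\gen A$, yielding a homomorphism with image $\gen{\hat A}$, i.e. an epimorphism $\gen A\twoheadrightarrow\gen{\hat A}$.

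For injectivity, assume the ping-pong system is faithful, let $w\in\gen A$ be nontrivial, and fix $x$ with $xw\ne x$. If $\eta(x)\ne\eta(xw)$ then $\eta(x)\hat{\str w}=\eta(xw)\ne\eta(x)$ and $\hat{\str w}\ne\id$, so suppose $\zeta:=\eta(x)=\eta(xw)$. If $\zeta$ is finite, its longest element is a symbol $\tilde t$ followed by a word $\str u$ with $t$ of trivial history and $tu=x$; this longest element evaluates to $x$, and likewise $\eta(xw)$ evaluates to $xw$, so $\eta(x)=\eta(xw)$ forces $x=xw$, a contradiction. Hence $\zeta$ is infinite, and here faithfulness enters: for each $\str v\in\zeta$ with $|\str v|>|\str w|$, density of $\Lambda_A$ provides a finite $\zeta^{(\str v)}\in K_A$ containing $\str v$; writing its longest element as $\tilde t_v$ followed by a word $\str u_v$ which has $\str v$ as a suffix, say $\str u_v=\str p_v\str v$, we obtain a point $z_{\str v}:=(t_vp_v)v$ of finite history with $t_vp_v\in J(\str v)$. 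I claim $w$ must move $z_{\str v}$ for some such $\str v$; granting this, $\eta(z_{\str v})\ne\eta(z_{\str v}w)$ by the finite case, so $\hat{\str w}\ne\id$. To prove the claim, suppose $w$ fixed every $z_{\str v}$. By Lemma~\ref{threaded} the local reduction of $\str{vw}$ is the same at every point of $J(\str v)=J(\str{vw})$, and since $|\str v|>|\str w|$ this reduction retains the first symbol of $\str v$; call it $\str v^\dagger$, and let $r_{\str v}\in J(\str v)$ be the preimage of $x$ under $\str v$ (it exists since $\str v\in\zeta=\eta(x)$). Then $(t_vp_v)v^\dagger=z_{\str v}w=z_{\str v}=(t_vp_v)v$, so by Lemma~\ref{Collision} one of $\str v,\str v^\dagger$ is a suffix of the other; the difference is a threaded word whose $J$-set contains the finite-history point $t_vp_v$ and which fixes it, so Lemma~\ref{TreeAction} forces the difference to be empty. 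Hence $\str v^\dagger=\str v$, and then $xw=r_{\str v}v^\dagger=r_{\str v}v=x$, contradicting $xw\ne x$. This proves the claim, hence the theorem.

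The main obstacle is the last step: turning ``$w$ fixes all the approximating finite-history points $z_{\str v}$'' into ``$w$ fixes $x$''. This is precisely where faithfulness is used, and it is the abstract substitute for the continuity argument which drives the proof of Theorem~\ref{Faithful} in $\HomeoI$ and which is unavailable here; the rest is bookkeeping with the construction of $K_A$ and Lemmas~\ref{LocRedBasics}--\ref{FellowTraveler3}. One should also take some care with the combinatorics of $K_A$ near $M_A$ — histories beginning with a symbol $\tilde t$, the definition of $\dest(\tilde t)$, and the behaviour of $\hat a$ at such histories — and with the possibility that $J(\str w)$ meets the feet of $A$ when applying Lemma~\ref{Collision}; none of these should cause real trouble, but they are the places where the ``exercise for the reader'' hides its work.
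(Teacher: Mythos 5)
The paper gives no proof of this theorem (it is explicitly left as an exercise), so your argument can only be measured against the template the paper supplies: the proof of Theorem \ref{Faithful} together with the remark that Lemmas \ref{LocRedBasics}--\ref{FellowTraveler3} remain valid for abstract ping-pong systems. Measured that way, your proof is essentially the intended one. The epimorphism follows from the equivariance \(\eta(xa)=\eta(x)\hat a\) (which the paper itself asserts) plus density of \(\{\eta(x)\mid x\in S\}\) in the Hausdorff space \(K_A\), and your infinite-history case --- choose \(\str{v}\in\eta(x)\) longer than \(\str{w}\), use faithfulness to produce a finite history through \([\str{v}]\) and hence a finite-history point \(z_{\str{v}}\in J(\str{v})\,v\), then run Lemmas \ref{NoReturn}, \ref{FellowTraveler1}, \ref{Collision} and \ref{TreeAction} --- is a faithful transplant of the final paragraph of the proof of Theorem \ref{Faithful}, with faithfulness of the system correctly identified as the substitute for the continuity/density-of-orbits argument that is unavailable for abstract permutations. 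The details I checked (that \(\eta(x)\in K_A\), that \(z_{\str{v}}\) has finite history, that \(\str{v}^\dagger=\str{v}\) forces \(xw=x\)) all go through.

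One step is not justified as written. In the finite-history case you say the longest element \(\str{\tilde{t}u}\) of \(\zeta=\eta(x)=\eta(xw)\) ``evaluates to \(x\)'' and also to \(xw\), hence \(x=xw\). But in the abstract setting \(\tilde t\in M_A\) is a \emph{subset of \(A\)}, not a point, and \(s\mapsto\tilde s\) need not be injective (the paper records injectivity of \(t\mapsto\str{\tilde t}\) only in the concrete setting of Section \ref{CombToIsoSec}); a priori \(\str{\tilde{t}u}\) lies in \(\eta(x)\) via a witness \(t\) and in \(\eta(xw)\) via a different witness \(t'\) with \(\tilde t=\widetilde{t'}\), giving \(x=tu\) and \(xw=t'u\) with no immediate contradiction. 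The gap closes with Lemma \ref{DisjointOrbits}: \(t'=t(uwu^{-1})\) lies in the \(\gen{A}\)-orbit of \(t\), and distinct points of trivial history have disjoint orbits, so \(t=t'\) and \(x=xw\) after all. A second, smaller soft spot is that your density argument only treats the subbasic sets \([\str{w}]\); this is harmless because finite histories are isolated points of \(K_A\) and each is realized as \(\eta(su)\) for an actual witness \(s\) of its leading symbol, while at an infinite history the sets \([\str{w}]\) for \(\str{w}\in\eta\) form a neighborhood base. With these two repairs, both using tools you already cite, the proof is complete.
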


The map \(x \mapsto \eta(x)\) defined in Section \ref{PingPongSec} adapts \emph{mutatis mutandis}
to define a map \(s \mapsto \eta(s)\) from \(S\) into \(K_A\).
It is readily verified that if \(a \in A\) and \(s \in S\), then \(\eta(sa) = \eta(s)\hat a\).
The existence of a faithful ping-pong system also has the following structural consequence
which follows readily from the abstract form of Lemma \ref{TreeAction}.

\begin{prop}
If \(A\) admits a faithful ping-pong system, then \(\gen{A}\) is torsion free.
\end{prop}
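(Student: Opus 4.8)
The plan is to transfer the question to the action of $\gen{\hat A}$ on $K_A$ and run the ping-pong argument along the orbit of a point of trivial history. First I would invoke the preceding theorem (that faithfulness makes $a\mapsto\hat a$ an isomorphism $\gen A\cong\gen{\hat A}$), so that it suffices to show $\gen{\hat A}$ is torsion free. Suppose $g\in\gen{\hat A}$ satisfies $g^n=\id$ for some $n\ge1$ and, toward a contradiction, $g\ne\id$. The fixed set $\{\eta\in K_A:\eta g=\eta\}$ is closed ($K_A$ is Hausdorff and $g$ is a homeomorphism) while $\Lambda_A$ is dense by faithfulness, so $g$ moves some finite $\eta\in\Lambda_A$. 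Every such $\eta$ can be written $\eta=\rho\hat u$ with $\rho$ the unique root in the tree containing $\eta$ and $\str u$ locally reduced at $\rho$; the conjugate $\hat u g\hat u^{-1}$ then moves $\rho$ and again has order dividing $n$, so, since torsion-freeness is conjugation invariant, I may assume $g$ moves a root $\rho=\eta(t_0)$ with $t_0\in S$ of trivial history, in particular with $t_0$ in no foot.

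Next I would use that the orbit $O:=\rho\gen{\hat A}$, equipped with the arcs $\zeta\mapsto\zeta\hat a$ (for those $a\in A^\pm$ with $\zeta\hat a\ne\zeta$), is a tree: this is precisely the abstract form of Lemma \ref{TreeAction}, which forbids a nontrivial locally reduced word fixing a finite-history point and hence forbids a nontrivial reduced closed walk based at one. Since $g$ has finite order, the $\langle g\rangle$-orbit of $\rho$ is a cycle $\rho=\eta_0,\eta_1,\dots,\eta_{m-1},\eta_m=\eta_0$ with the $\eta_i$ ($0\le i<m$) distinct and $m\ge2$; in particular $\eta_k\ne\eta_{k+1}$ for all $k$. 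For each $k$ let $\str v^{(k)}$ be the word, unique by Lemma \ref{TreeAction}, that is locally reduced at $\eta_k$ and satisfies $\eta_k\hat v^{(k)}=\eta_{k+1}$; then $\str v^{(k)}\ne\varepsilon$ and $\str v^{(k+m)}=\str v^{(k)}$. Iterating the concatenation clauses of Lemma \ref{LocRedBasics}, the free reduction of $\str v^{(0)}\str v^{(1)}\cdots\str v^{(m-1)}$ is locally reduced at $\rho$ and evaluates there to $\eta_0\hat g^m=\rho$; since $t_0$ lies in no foot, Lemma \ref{PingPong} (equivalently, the return-word case of Lemma \ref{TreeAction}) forces that free reduction to be $\varepsilon$. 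Thus $\str v^{(0)}\str v^{(1)}\cdots\str v^{(m-1)}$ is a product of nonempty, freely reduced blocks that freely reduces to the empty word.

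The remaining step — deriving a contradiction from this collapse — is where I expect the real content to lie. The blocks $\str v^{(k)}$ are far from arbitrary: each is a locally reduced subword of one fixed word $\str w$ representing $g$, so $|\str v^{(k)}|\le|\str w|$, and the dynamics pins them together (for instance, by Lemmas \ref{LocRedBasics} and \ref{TreeAction}, $(\str v^{(k)})^{-1}$ is the locally reduced form of $g^{-1}$ at $\eta_{k+1}$). Since a product of nonempty freely reduced blocks collapses, some block must be swallowed entirely by its neighbours; this amounts to a fold of the tree $O$ at some $\eta_j$, namely the locally reduced forms of $g$ and of $g^{-1}$ at $\eta_j$ begin with a common letter. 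Tracking this fold back through the uniqueness assertion of Lemma \ref{TreeAction} produces a nontrivial locally reduced word fixing a finite-history point, contradicting Lemma \ref{TreeAction} in the return-word case. The heart of the argument is therefore this cancellation bookkeeping — equivalently, the statement that a cyclically reduced word acting on $O$ with bounded displacement can have no periodic point that is not fixed; the earlier paragraphs are a routine translation of Section \ref{PingPongSec}, using Lemmas \ref{LocRedBasics}, \ref{threaded}, and \ref{DisjointOrbits} in the abstract setting, and the hypotheses actually invoked (density of $\Lambda_A$, the ping-pong axioms, and Lemma \ref{TreeAction}) are exactly the abstract ones, as the Proposition requires.
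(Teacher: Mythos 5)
The paper itself gives no proof of this proposition (the whole section is left as an exercise, with the remark that the statement ``follows readily from the abstract form of Lemma~\ref{TreeAction}''), so I can only judge your argument on its own terms. Everything through the penultimate step is correct and well supported: the transfer to \(\gen{\hat A}\), the use of density of \(\Lambda_A\) against the closed fixed set, the conjugation that lets you assume \(g\) moves a root \(\rho\), the uniqueness and nonemptiness of the words \(\str{v}^{(k)}\) via Lemmas~\ref{LocRedBasics} and~\ref{TreeAction}, and the conclusion via Lemma~\ref{PingPong} that \(\str{v}^{(0)}\cdots\str{v}^{(m-1)}\) freely reduces to \(\varepsilon\).

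The final step, however, has a genuine gap, and it is exactly at the point you flag as ``the real content.'' The fact you extract from the collapse --- that at some \(\eta_j\) the locally reduced forms of \(g\) and of \(g^{-1}\) both begin with the same letter (namely the inverse of the last letter of \(\eta_j\)'s string) --- is not a contradiction. It says only that the closed walk traced by \(\rho,\eta_1,\dots,\eta_{m-1},\rho\) in the tree doubles back at a deepest vertex, which is what \emph{every} closed walk in a tree must do; two distinct locally reduced words at the same finite\nobreakdash-history point are entirely free to share a first letter, since Lemma~\ref{TreeAction} only forbids them from being equal. Consequently ``tracking this fold back \dots produces a nontrivial locally reduced word fixing a finite-history point'' is an unsubstantiated leap: no such return word is exhibited, and Lemma~\ref{TreeAction} by itself cannot supply one, because it only says the orbit of \(\rho\) is a tree, and an arbitrary permutation of the vertex set of a tree (which \(\hat g\) is --- it is \emph{not} a graph automorphism, since \(d(\eta\hat g,\eta'\hat g)\ne d(\eta,\eta')\) in general) can perfectly well have finite orbits of size \(\ge 2\). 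The input your argument never actually uses is that all the blocks \(\str{v}^{(k)}\) are local reductions of one and the same word \(\str{w}\) at the successive points \(\eta_k\); some rigidity statement tying these local reductions together (this is what Lemmas~\ref{FellowTraveler1}--\ref{FellowTraveler3} are designed for, or, in the orderable case, the reverse-lexicographic order of Lemma~\ref{RevLex}) is indispensable, and without it the combinatorial configuration you reach is simply consistent. So the proof does not close as written.
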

\noindent
This shows in particular that \(\PSL_2(\Z) = \gen{\alpha,\beta}\) --- which contains elements of finite order such as
\(t \mapsto -1/t\) --- does not admit a ping-pong system.

A \emph{blueprint} for a ping-pong system is a pair \(\Bfrak = (\str{B},\str{supt})\)
such that:
\begin{itemize}

\item \(\str{B}\) is a set and \(\str{supt}\) is a binary relation on \(\str{B}\)
which is interpreted as a set-valued function:
\(\str{b} \in \str{supt}(\str{a})\) if \((\str{a}, \str{b}) \in \str{supt}\);

\item if \(\str{a} \in \str{B}\) and \(\str{supt}( \str{a})\) is nonempty, then \(\str{a} \in \str{supt}(\str{a})\);

\item if \(\str{a} \in \str{A}\), then there is a unique \(\str{a}^{-1} \in \str{A} \setminus \{\str{a}\}\) with
\(\str{supt} (\str{a}^{-1}) = \str{supt} (\str{a})\).

\end{itemize}
Additionally, setting 
\(\str{A} : = \{\str{a} \in \str{B} \mid \str{supt}(\str{a}) \ne \emptyset\}\) we require that:
\begin{itemize}

\item if \(\str{b} \ne \str{c} \in \str{B} \setminus \str{A}\), then \(\str{\tilde b} \ne \str{\tilde c}\) where
\({\str{\tilde b}} := \{\str{a} \in \str{A} \mid \str{b} \in \str{supt}(\str{a}) \}\).
 
\end{itemize}
Two blueprints are isomorphic if they are isomorphic as structures.
If \(A\) is a set of permutations which admits a ping-pong system, then the blueprint
\(\Bfrak_A = (\str{B}_A,\str{supt}_A)\)
for the system is defined by
\(\str{B}_A := \{\str{a} \mid a \in A^\pm\} \cup \{{\str{\tilde  s}} \mid \tilde s \in \widetilde S\}\)
with \(\str{b} \in \str{supt}_A(\str{a})\) if \(\dest(b) \subseteq \supt(a)\).
Also, if \(\Bfrak = (\str{A},\str{supt})\) is a blueprint for a ping-pong system, then
one defines \(K_{\Bfrak}\) and homeomorphisms \({\str{\hat a}} : K_{\Bfrak} \to K_{\Bfrak}\)
for \(\str{a} \in \str{A}^{\Bfrak}\) by a routine adaptation of the construction above.
In fact \(K_A\) is can be regarded as factoring though its blueprint in the sense that
\(K_A = K_{\Bfrak_A}\) modulo identifying \(a\) and \(\str{a}\).
It is routine to check that the following theorem holds as well.

\begin{thm} \label{AbsCombToIso}
If \(\Bfrak_0\) and \(\Bfrak_1\) are isomorphic blueprints for ping-pong systems,
then the isomorphism induces an homeomorphism \(\theta:K_{\Bfrak_0} \to K_{\Bfrak_1}\)
such that \(g \mapsto g^\theta\) defines an isomorphism between
\(\gen{\hat {\str{a}} \mid \str{a} \in \str{A}_0}\) and \(\gen{\hat {\str{a}} \mid \str{a} \in \str{A}_1}\).
\end{thm}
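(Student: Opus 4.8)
The plan is to imitate the proof of Theorem~\ref{CombToIso} in Section~\ref{CombToIsoSec}, but now entirely at the symbolic level. Here the argument is in fact more direct than that of Theorem~\ref{CombToIso}: the two groups in the statement are genuine homeomorphism groups of the spaces $K_{\Bfrak_0}$ and $K_{\Bfrak_1}$, so no analog of the faithfulness Theorem~\ref{Faithful} is required --- it suffices to produce a homeomorphism $\theta$ conjugating one generating family onto the other.

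First I would record what an isomorphism $\psi \colon \Bfrak_0 \to \Bfrak_1$ of blueprints gives: a bijection of the underlying sets carrying $\str{supt}_0$ to $\str{supt}_1$. The blueprint axioms force this bijection to carry $\str A_0$ onto $\str A_1$ (these being the symbols with nonempty $\str{supt}$), to commute with the formal inverse $\str a\mapsto\str a^{-1}$ (since $\str a^{-1}$ is characterized by $\str{supt}(\str a^{-1})=\str{supt}(\str a)$ together with $\str a^{-1}\ne\str a$), and to carry the marker symbols $\str{\tilde b}\in\str B\setminus\str A$ to marker symbols while preserving the membership relation defining them. Hence $\psi$ induces a length-preserving bijection from finite strings over $\str B_0$ to finite strings over $\str B_1$, and thence a bijection on suffix-closed families of such strings.

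The crux is the observation that \emph{everything} entering the construction of $K_{\Bfrak}$ and of the homeomorphisms $\hat{\str a}$ is expressible in the language of the blueprint, so that applying $\psi$ symbol-by-symbol respects all of it. Concretely, for $\str a,\str b\in\str A^\pm$ the relations ``$\dest(a)\cap\supt(b)=\emptyset$'', ``$\dest(a)\subseteq\supt(b)$'' and ``$\dest(a)=\src(b)$'' are each equivalent to a purely combinatorial statement about $\Bfrak$: by the ping-pong axioms $\dest(a)\cap\supt(b)\ne\emptyset$ already implies $\dest(a)\subseteq\supt(b)$, which is exactly $\str a\in\str{supt}(\str b)$; pairwise disjointness of destinations makes $\dest(a)=\src(b)=\dest(b^{-1})$ equivalent to $\str a=\str b^{-1}$; and the analogous statements hold with a marker symbol in place of $\str a$, the relevant information again being recorded in $\Bfrak$ via the definitions of $\str{\tilde b}$ and of $\dest(\str{\tilde s})$. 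Granting this dictionary, the defining clauses of $K_{\Bfrak}$ --- suffix-closure, the condition $\dest(a)\subseteq\supt(b)\setminus\src(b)$ on consecutive symbols, at most one string of each length, and the leftward-growth condition when no marker symbol occurs --- all translate under $\psi$, so $\eta\mapsto\{\psi(\str w):\str w\in\eta\}$ is a well-defined bijection $\theta\colon K_{\Bfrak_0}\to K_{\Bfrak_1}$. Since $\theta$ carries the basic clopen set $[\str w]$ onto $[\psi(\str w)]$ and conversely, it is a homeomorphism.

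It remains to verify the intertwining relation $\eta\hat{\str a}\,\theta=\eta\theta\,\widehat{\psi(\str a)}$ for every $\eta\in K_{\Bfrak_0}$ and $\str a\in\str A_0$; granting it, $g\mapsto\theta^{-1}g\theta$ is a group isomorphism $\Homeo(K_{\Bfrak_0})\to\Homeo(K_{\Bfrak_1})$ sending each $\hat{\str a}$ to $\widehat{\psi(\str a)}$ and therefore restricting to the asserted isomorphism of the generated subgroups. For the intertwining relation one inspects the three cases in the definition of $\hat{\str a}$: which case applies is decided by the position of $\dest(\eta)=\dest(\str a')$ (where $\str a'$ is the common final symbol of the nonempty members of $\eta$) relative to $\str a$, which by the dictionary above is blueprint-definable and hence preserved by $\psi$; in the appending case the operation is ``append $\str a$ to each member, collect local reductions, and adjoin $\varepsilon$,'' and local reduction is the same symbol-level rewriting governed by the same clauses, so it too commutes with $\psi$; the deletion and fixing cases are immediate. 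The main obstacle is precisely this bookkeeping: establishing once and for all the dictionary that translates the geometric relations among sources, destinations and supports into statements about the finite combinatorial structure $\Bfrak$, and checking that $K_{\Bfrak}$, the action of each $\hat{\str a}$, and the local-reduction procedure depend on $\Bfrak$ alone. Once that dictionary is set up, the remaining verifications are routine.
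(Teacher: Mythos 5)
Your proposal is correct and supplies precisely the verification that the paper leaves to the reader: the paper offers no proof of Theorem \ref{AbsCombToIso} beyond declaring it ``routine to check,'' and your argument --- that $K_{\Bfrak}$, the local-reduction procedure, and the three-case definition of each $\hat{\str{a}}$ are all expressed in the blueprint data $(\str{B},\str{supt})$ (via the dictionary translating containments and disjointness of sources, destinations and supports into the relation $\str{supt}$ and the formal inverse), so that a blueprint isomorphism $\psi$ transports one construction to the other, yields a homeomorphism exchanging the basic clopen sets $[\str{w}]$ and $[\psi(\str{w})]$, and intertwines $\hat{\str{a}}$ with $\widehat{\psi(\str{a})}$, whence conjugation restricts to the asserted isomorphism of the generated subgroups --- is the intended one. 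Your opening observation that no analogue of Theorem \ref{Faithful} is required here, since the groups in question already act on $K_{\Bfrak_0}$ and $K_{\Bfrak_1}$ by definition, is also correct.
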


If \(A\) is a set of permutations which admits a faithful ping-pong system,
then the blueprint for \(A\) and for \(\{a^{k(a)} \mid a \in A\}\) are canonically
isomorphic whenever \(a \mapsto k(a)\) is an assignment of a positive integer to each element of \(A\).

\begin{cor}
Any set of permutations \(A\) which admits a faithful ping-pong system is an algebraically
fast generating set for \(\gen{A}\).
\end{cor}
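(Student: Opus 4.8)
The plan is to reduce the statement to the combinatorial picture of Theorem~\ref{AbsCombToIso}. Fix a set $A$ of permutations equipped with a faithful ping-pong system and an assignment $a \mapsto k(a)$ of positive integers to the elements of $A$, and set $A_1 := \{a^{k(a)} \mid a \in A\}$. What must be shown is that the bijection $a \mapsto a^{k(a)}$ extends to an isomorphism of $\gen{A}$ onto $\gen{A_1}$. By the remark immediately preceding the corollary, $A_1$ again carries a faithful ping-pong system, and its blueprint $\Bfrak_{A_1}$ is canonically isomorphic to $\Bfrak_A$ via the isomorphism $\iota$ that sends the symbol for $a^{\pm 1}$ to the symbol for $a^{\pm k(a)}$, for each $a \in A$.

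The content of that remark --- and the step I expect to be the main obstacle in a fully self-contained write-up --- is the construction of the ping-pong system on $A_1$. One cannot simply take $\dest(a^{k(a)}) := \dest(a)$; exactly as when one replaces an arbitrary marking by the canonical marking in the $\HomeoI$ setting, the destinations must be shrunk so that $\supt(a) \setminus \bigl(\src(a^{k(a)}) \cup \dest(a^{k(a)})\bigr)$ becomes a fundamental domain for the action of $\gen{a^{k(a)}}$ on $\supt(a)$. The point is that this enlargement of the ``middle'' alters none of the blueprint data --- the containment pattern among supports and destinations, and the collection $M$ of initial markers --- so that $\iota$ really is an isomorphism $\Bfrak_A \to \Bfrak_{A_1}$.

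Granting the blueprint isomorphism, I would assemble the map from three pieces. First, since both ping-pong systems are faithful, the theorem asserting that a faithful system makes $a \mapsto \hat a$ an isomorphism gives $\gen{A} \cong \gen{\hat A}$ (via $a \mapsto \hat a$) and $\gen{A_1} \cong \gen{\widehat{A_1}}$ (via $a^{k(a)} \mapsto \widehat{a^{k(a)}}$). Second, Theorem~\ref{AbsCombToIso} applied to $\iota$ produces a homeomorphism $\theta : K_{\Bfrak_A} \to K_{\Bfrak_{A_1}}$ for which $g \mapsto g^{\theta}$ is an isomorphism $\gen{\hat A} \to \gen{\widehat{A_1}}$; identifying $K_{\Bfrak_A}$ with $K_A$ and $K_{\Bfrak_{A_1}}$ with $K_{A_1}$, and using that $\theta$ relabels symbols through $\iota$ while $\hat a$ acts by appending the symbol for $a$ and locally reducing, one reads off $(\hat a)^{\theta} = \widehat{a^{k(a)}}$. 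Composing $\gen{A} \cong \gen{\hat A} \cong \gen{\widehat{A_1}} \cong \gen{A_1}$ then gives an isomorphism carrying $a$ to $a^{k(a)}$, which is precisely the assertion that $A$ is an algebraically fast generating set for $\gen{A}$.

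Finally, I note that faithfulness of the ping-pong system on $A_1$ also drops out of the above rather than needing a separate check: once $\iota$ is known to be a blueprint isomorphism, the homeomorphism $\theta$ carries the set of finite points of $K_A$ --- dense because the system on $A$ is faithful --- bijectively onto the set of finite points of $K_{A_1}$, so that set is dense too. Thus the only genuinely new work is the blueprint computation sketched above; the remaining verifications, that $(\hat a)^{\theta} = \widehat{a^{k(a)}}$ and that the three isomorphisms compose as claimed, are routine.
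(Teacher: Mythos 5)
Your overall assembly is exactly what the paper intends (the paper leaves every proof in this section to the reader): quote the statement that the blueprints of \(A\) and \(A_1 = \{a^{k(a)} \mid a \in A\}\) are canonically isomorphic, apply Theorem \ref{AbsCombToIso} to get \(\gen{\hat A} \cong \gen{\widehat{A_1}}\) with \(\hat a \mapsto \widehat{a^{k(a)}}\), and sandwich this between the isomorphisms \(\gen{A} \cong \gen{\hat A}\) and \(\gen{A_1} \cong \gen{\widehat{A_1}}\) supplied by faithfulness. Your observation that faithfulness of the system on \(A_1\) need not be checked separately, because the blueprint isomorphism carries finite points of \(K_A\) onto finite points of \(K_{A_1}\), is correct and is genuinely needed for the last leg.

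The gap is in the part you single out as ``the only genuinely new work.'' Shrinking the destinations so that \(\supt(a) \setminus (\src(a^{k(a)}) \cup \dest(a^{k(a)}))\) becomes a fundamental domain for \(\gen{a^{k(a)}}\) is the right construction, and the containment pattern \(\dest(b) \subseteq \supt(a)\) is indeed preserved, since the new destinations are nonempty subsets of the old ones and the fourth axiom forces all-or-nothing intersection of destinations with supports. But you then assert that ``the collection \(M\) of initial markers'' is unaltered, and that is precisely the claim that can fail: shrinking \(\dest(a)\) enlarges the set of points lying in no destination, and such points can contribute genuinely new sets \(\tilde s\) to \(M\). The paper's Example \ref{NonfaithfulExample} --- the action of \(\{\alpha^2,\beta^2\}\) on the irrationals, where \(M = \emptyset\), versus \(\{\alpha^4,\beta^4\}\), where \(M\) is nonempty --- is a direct counterexample to your unqualified assertion, and the sentence immediately following the corollary warns of exactly this phenomenon. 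So the real content of the remark you are citing is not the construction of the shrunk destinations but the argument that \emph{faithfulness} of the original system prevents new marker patterns from appearing (every point newly outside all destinations lies in some old \(\dest(a)\), and one must use density of the finite histories to see that its support pattern already occurs in \(M_A\)). Your sketch never brings faithfulness to bear at this point, so as written it argues for a statement (blueprint invariance under taking powers for arbitrary ping-pong systems) that is false.
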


If the system is not faithful, then \(\{a^2 \mid a \in A^2\}\) may contain new markers, as
was illustrated in Example \ref{NonfaithfulExample}.
Notice that in this example,
both \(\gen{\alpha^2,\beta^2}\) and \(\gen{\alpha^4,\beta^4}\) are free --- and hence marked isomorphic ---
even though the blueprints associated to the ping-pong systems are not isomorphic.

A blueprint \(\Bfrak\) is (cyclically) orderable if there is a (cyclic) ordering on \(\str{B}\)
such that for all \(\str{a} \in \str{A}\), 
\(\str{supt}(\str{a})\) is an interval in the (cyclic) ordering with endpoints \(\src(\str{a})\) and \(\dest(\str{a})\).
It is readily checked that the (cyclic) order on \(\str{B}\)
induces a reverse lexicographic (cyclic) order on \(K_{\Bfrak}\) which is preserved by the
homeomorphisms \(\hat {\str{a}}\) for \(\str{a} \in \str{A}\).

\begin{cor}
If \(A\) is a finite set of permutations which admits a faithful ping-pong system, then
\(\gen{A}\) embeds into Thompson's group \(V\).
If the blueprint of \(A\) is cyclically orderable, then \(\gen{A}\) embeds into
\(T\).
If the blueprint of \(A\) is orderable, then \(\gen{A}\) embeds into \(F\). 
\end{cor}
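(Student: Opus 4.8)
The plan is to exploit the representation of $\gen A$ on the space $K_A$ that the previous theorem provides, and to recognize that it is an action by \emph{prefix-replacement} homeomorphisms — exactly the kind of action one can push into $V$, and into $T$ and $F$ once an invariant (cyclic) order is available.

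Since the ping-pong system is faithful, the preceding theorem makes $a\mapsto\hat a$ an isomorphism $\gen A\cong\gen{\hat A}$, so it suffices to embed $\gen{\hat A}$. The basic clopen sets $[\str w]$ of $K_A$ are the vertices of a rooted, locally finite tree $\mathcal T_A$ (refinement of $[\str w]$ is obtained by prepending a symbol, so branches grow to the left and the leaves are exactly the strings whose first symbol lies in $M$, because $\supt(\tilde s)=\emptyset$), whose space of complete branches — infinite branches together with the finite ones ending at a leaf — is $K_A$. I would then unwind the three clauses defining $\hat a$ and observe, using the finiteness of $A$ (so that $A^{\pm}\cup M$ is finite and each clause is governed by finitely many of the innermost symbols involved), that $\hat a$ carries each of finitely many basic clopen sets $[\str w]$ onto another such set by the evident ``adjoin or delete $\str a$'' map. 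Thus $\gen{\hat A}$ sits inside the full group of prefix-replacement homeomorphisms of $\mathcal T_A$.

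Because $K_A$ can have isolated points (from the leaves, and possibly from non-branching rays of $\mathcal T_A$), it need not be a Cantor set; I would remedy this by blowing up each isolated point of $K_A$ to a clopen Cantor set — equivalently, grafting a copy of the binary tree above each vertex $\str w$ for which $[\str w]$ is a singleton — producing a locally finite, dead-end-free tree $\hat{\mathcal T}$ whose branch space $\hat K$ is perfect, hence a Cantor set containing a closed invariant copy of $K_A$. Each $\hat a$ permutes the isolated points of $K_A$ among themselves, so I extend the \emph{action} of $\gen{\hat A}$ over the grafted Cantor sets by the canonical identifications; because these are the identity in the new coordinate, the extensions compose correctly, and their restriction to $K_A$ is the original (faithful) action, so the extended action on $\hat K$ is faithful and still by prefix-replacement homeomorphisms of $\hat{\mathcal T}$. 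A final binary subdivision of the vertices of $\hat{\mathcal T}$ realizes $\hat K$ as the branch space of a subtree of the infinite binary tree and sends prefix-replacement homeomorphisms to elements of $V$; hence $\gen A\hookrightarrow V$. For $T$: when the blueprint of $A$ is cyclically orderable, the reverse-lexicographic cyclic order on $K_A$ noted just before the corollary is $\hat a$-invariant; I carry it to $\hat K$, complete $\hat K$ to the circle by filling in its countably many complementary arcs, and extend each generator over those arcs by an affine map, exhibiting $\gen A$ inside $T$. The case of $F$ is identical with the circle replaced by the interval and a linear reverse-lexicographic order in place of the cyclic one.

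The step I expect to be the main obstacle is this last one: the passage from ``full prefix-replacement group of a locally finite, dead-end-free tree'' to Thompson's $V$ (and, with an invariant (cyclic) order, to $T$ and $F$). One must check that the blow-up of the isolated points can be performed compatibly with the (cyclic) order in the $T$ and $F$ cases, that the binary subdivision genuinely conjugates prefix-replacement homeomorphisms to canonical (respectively piecewise-linear) maps, and — most delicately — that the generators are extended over the grafted pieces and over the complementary arcs \emph{as an action}, so that $\hat a\mapsto(\text{extension})$ is a homomorphism rather than merely a map. Everything else — the cylinder analysis of the $\hat a$, the identification of $K_A$ with the branch space of $\mathcal T_A$, and the perfectness of $\hat K$ — is routine bookkeeping against the definitions and the standard descriptions of $V$, $T$, $F$ as groups of prefix-replacement (respectively piecewise-linear) self-maps of the Cantor set, the circle, and the interval.
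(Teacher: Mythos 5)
Your argument is sound, but it takes a genuinely different route from the paper's, which is essentially the mirror image of yours: rather than transporting the action on \(K_A\) into the Cantor set, the paper observes that any finite blueprint (cyclically orderable, orderable) can be realized as the blueprint of a faithful ping-pong system of finitely many elements of \(V\) (respectively \(T\), \(F\)) --- one chooses disjoint clopen sets, respectively arcs or intervals, arranged as the blueprint dictates, together with prefix-replacement, respectively PL, maps having those sources and destinations --- and then invokes Theorem \ref{AbsCombToIso} to conclude that \(\gen{A}\) is marked isomorphic to the subgroup of \(V\) so constructed. That route delegates all the combinatorics to machinery already proved and never confronts the point-set pathologies of \(K_A\): the isolated points (the leaves \([\str{\tilde{s}w}]\) and the non-branching rays) that force your blow-up simply never arise. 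What your route buys in exchange is an intrinsic model: you exhibit the action of \(\gen{\hat A}\) on \(K_A\) itself as a prefix-replacement action on a finitely branching tree and conjugate that into \(V\), which makes visible \emph{why} the corollary holds rather than outsourcing it to a realization lemma; your cylinder analysis of \(\hat a\) (each level-one cylinder \([\str{b}]\) is carried canonically onto \([\str{ba}]\), collapsed by deletion, or fixed, according to the three clauses) is exactly the content that Theorem \ref{AbsCombToIso} packages abstractly. The steps you flag as delicate are real but do go through: since prepending is governed only by the leftmost symbol there are finitely many subtree types with canonical identifications; after the blow-up every cylinder is perfect, so every infinite path branches infinitely often, and the recursive binary coding obtained by fixing, for each subtree type, a complete (order-compatible, in the \(T\) and \(F\) cases) binary prefix code indexing its children is a homeomorphism onto \(\{0,1\}^{\N}\) carrying canonical cylinder maps to prefix replacements, hence lands in \(V\), and, when it respects the invariant (cyclic) order, in the corresponding PL maps of the circle or interval, hence in \(T\) or \(F\).
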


\begin{proof}
It is readily verified that any finite blueprint can be realized as the blueprint of
a ping-pong system of a finite subset of \(V\).
Moreover, if the blueprint is cyclically orderable (or orderable), then it can be realized
by elements of \(T\) (respectively \(F\)). 
The corollary follows by Theorem \ref{AbsCombToIso}.
\end{proof}

\providecommand{\bysame}{\leavevmode\hbox to3em{\hrulefill}\thinspace}
\providecommand{\MR}{\relax\ifhmode\unskip\space\fi MR }

\providecommand{\MRhref}[2]{
  \href{http://www.ams.org/mathscinet-getitem?mr=#1}{#2}
}
\providecommand{\href}[2]{#2}

\end{document}